\DeclareFontFamily{OML}{rsfs}{\skewchar\font'177}
\DeclareFontShape{OML}{rsfs}{m}{n}{ <5> <6> rsfs5 <7> <8> <9> rsfs7
  <10> <10.95> <12> <14.4> <17.28> <20.74> <24.88> rsfs10 }{}
\DeclareMathAlphabet{\mathfs}{OML}{rsfs}{m}{n}
\newtheorem{theorem}{Theorem}[section]
\newtheorem*{theorem*}{Theorem}
\newtheorem{lemma}{Lemma}[section]
\newtheorem{proposition}{Proposition}[section]
\newtheorem*{example*}{Example}
\newtheorem{definition}{Definition}[section]
\newtheorem{corollary}{Corollary}[section]
\numberwithin{equation}{section}
\renewcommand{\mod}{\mbox{$\,\mathrm{mod}\,$}}
\renewcommand{\epsilon}{\varepsilon}
\newcommand{\bb}[1]{\mathbb{#1}}  %write math blackboard.
\newcommand{\pg}{P_{G}(\phi)} %Gurevich pressure.
\newcommand{\eqm}{equilibrium measure } %write equilibrium measure.
\newcommand{\ent}[1]{h_{#1}(T)} %entropy
\newcommand{\ass}{Let $X$ be a topologically mixing TMS and let $\phi:X \rightarrow \bb{R}$ be Walters. } %we always assume this
\newcommand{\vp}{\varphi} %weird phi
\newcommand{\Lp}{L_{\phi}} % Ruelle's operator
\newcommand{\eps}[0]{\epsilon}
\newcommand{\sums}[0]{\sum_{i=0}^{\infty} \hat f(\hat T^i\hat x) -\hat f (\hat T^i\bar x)}
\newcommand{\ua}{\underline a}
\newcommand{\cal}[1]{\mathcal{#1}} %write in mathmatic calligraphy fancy letters
\newcommand{\Lps}{L_{\phi^*}} %Ruelle's operator for phi star
\newcommand{\la}{\langle}
\newcommand{\ra}{\rangle}
\newcommand{\lar}{\langle \underline a \rangle}
\newtheorem{fact}{Fact}
\newtheorem{claim}{Claim}
\newcommand{\vari}{{\rm var}}
\begin{document}

%+Title
\title{Bernoullicity of equilibrium measures on countable Markov shifts}
\author{Yair Daon}
\date{\today}

%-Title
\begin{abstract}
We study the equilibrium behaviour of a two-sided topological Markov shift with a countable number of states. We assume the potential associated with this shift is Walters with finite first variation and that the shift is topologically transitive. We show the equilibrium measure of the system is Bernoulli up to a period. In the process we generalize several theorems on countable Markov shifts. We prove a variational principle and the uniqueness of equilibrium measures. A key step is to show that functions with Walters property on a two-sided shift are cohomologous to one-sided functions with the Walters property. Then we turn to show that functions with summable variations on two-sided CMS are cohomologous to one-sided functions, also with summable variations.
\end{abstract}

\maketitle

%%%%%%%%%%%%%%INTRODUCTION!!!!!!!!%%%%%%%%%%%%%%%%%%%%%%%%
\section{Introduction}
\subsection{Countable Markov shifts}
Let $S$ be a finite or countable alphabet. Let $A$  be an $|S| \times |S|$ matrix with entries in $\{0,1\}$. A \textit{(one-sided) topological Markov shift} is a pair $(X,T)$ where $X:= \{x\in S^{\mathbb{N}}|\forall i\in \mathbb{N},\ A_{{x_i}{x_{i+1}}} =1\}$ and $T:X\rightarrow X$, $T(x_0,x_1,x_2,...)=(x_1,x_2,...)$. If $|S| = \aleph_{0}$ we call $X$ a \textit{countable Markov shift} (CMS). $X$ is called a \textit{one-sided shift space}, $T$ is called a \textit{shift operator} and clearly $TX\subseteq X$. The \textit{two-sided shift space} $(\hat X,\hat T)$ is defined similarly except that now $\hat X \subseteq S^{\bb{Z}}$ is made of two-sided sequences and $\hat T$ is a left shift. Objects related to the two-sided shift will have hats: a member of the two sided shift space is, for example $\hat x \in \hat X$. The topology we use is always the product topology induced by the discrete topology on the alphabet $S$. It is metrizable with $d(x,y) := \exp (-\min\{ |i|: x_i\ne y_i\})$ (this applies to both one sided and two sided shift spaces). For the two-sided shift, a basis is defined using \textit{cylinders}: $_m[a_0,...,a_n]:=\{\hat x\in \hat X|x_m=a_0,...,x_{m+n} =a_{n}\}$. Similarly for one-sided shifts: $[a_0,...,a_n]:=\{ x\in X|x_0=a_0,...,x_{n} =a_{n}\}$ (note that in the one-sided case cylinders always start at the zeroth coordinate). A TMS is \textit{topologically mixing} if for every two states $a,b$ there exists $N_{ab}\in \bb{N}$ s.t. $\forall n \geq N_{ab}$ there exist $\xi_i, 1\leq i \leq n-1$ s.t.  $A_{a\xi_1}A_{\xi_1\xi_2}\dots A_{\xi_{{(n-1)}b}} = 1$. A TMS is called \textit{topologically transitive} if for every two states $a,b$, there exists $N:=N_{ab} \in \bb{N}$ there exist $\xi_i, 1\leq i \leq N-1$ s.t.  $A_{a\xi_1}A_{\xi_1\xi_2}\dots A_{\xi_{(N-1)}b} = 1$. Clearly mixing implies transitivity. A fixed real valued function of a shift space (usually referred to as a \textit{potential}) may give rise to equilibrium measures, the analogue of an equilibrium distribution in statistical mechanics.
%%%%%%%%%%%%%%%%%%%%%%%%%%%%%%%%%%%%%%%%%%%%%%%%%%%%%%%%%%%%%%%%%%%%%%%%%%%
\subsection{Equilibrium measures}
Let $\phi\in C(X)$ (real-valued continuous). A $T$-invariant Borel probability measure $\mu$ is called an \textit{equilibrium measure}, if it maximizes the quantity $h_{\mu}(T) +\int \phi d\mu$ (subject to the requirement that $h_{\mu}(T) +\int \phi d\mu \ne \infty - \infty$). Equilibrium measures are important because they appear naturally via symbolic dynamics in smooth dynamics (as absolutely continuous invariant measures, physical measures etc.). There is great interest in their ergodic properties. One of the most important tools in studying them is \textit{Ruelle's operator} (a special case of the \textit{transfer operator}), see \cite{sar} for a thorough development of the theory. It is defined for $f:X\to \bb{R}$ as follows: $(L_\phi f)(x):= \sum_{Ty=x} e^{\phi(y)}f(y)$. We'll state the facts we need concerning it as we use them. Ruelle's operator is very useful when working on one-sided shifts, since the term $e^{\phi (y)}$ acts as averaging weights. This operator is less useful in the two-sided invertible case: $\hat T^{-1}\{\hat x\}$ is always a singleton so no averaging is done. Still, there is a way to use this operator on two-sided shifts, as we explain in the next subsection.
%%%%%%%%%%%%%%%%%%%%%%%%%%%%%%%%%%%%%%%%%%%%%%%%
\subsection{Cohomology to one-sided function}
Two real-valued functions $f,g$ on a TMS are said to be \textit{cohomologous} if there exists $h$ s.t. $f -g = h -h\circ T$ ($h-h\circ T$ is called a \textit{coboundary} and $h$ is called a \textit{transfer function}). Cohomology is an equivalence relation and it is interesting in the particular case where a two-sided function (i.e. depends on both positive and negative coordinates) is cohomologous to a one-sided function (depends only on non-negative coordinates). We define the \textit{natural projection} $\pi:\hat X \to X, \pi[(x_i)_{i=-\infty}^{\infty}] =(x_i)_{i=0}^{\infty}$.\\
We are interested in the cases where for $\hat f$ (two-sided) there exists $f$ (one-sided) s.t. $\hat f - f\circ \pi = h-h\circ T$. We consider three regularity conditions. Define the $n$th variation of $\hat \phi \in C(\hat X)$ as $\vari_n \hat\phi:= \sup \{ |\hat \phi(x) - \hat\phi(y)|: x_{-n+1}^{n-1} = y_{-n+1}^{n-1}\}$.
\begin{itemize}
\item $\hat \phi$ is \textit {weakly H\"older} if $\exists C>0, 0<\theta<1$ s.t. $\vari_n \hat\phi < C\theta^n$ for $n \geq 2$,
\item $\hat \phi$ has \textit{ summable variations} if $\sum_{n=2}^{\infty} \vari_n \hat\phi <\infty$.
\end{itemize}
We delegate the definition of Walters' condition, to section \ref{waltersection} (definitions \ref{walters1definition}, \ref{walters2definition} there).\\
It is known that H\"older continuity implies summable variations which, in turn, implies Walters condition.
In the finite alphabet case, Sinai \cite{sinai} considered weakly H\"older two-sided functions and showed that each is cohomologous to a one-sided weakly H\"older function (originally appeared in \cite{sinai}, but \cite{bowbook} is more accessible). Coelho \& Quas \cite{coel} did the same for functions with summable variations.  Walters \cite{wal} has done this for functions satisfying Walters condition. All these results, however, were proven in a compact setting. In order to consider infinite alphabet (equivalently, non compact shift spaces), one needs to develop the theory for such spaces. We show that the proof in \cite{sinai} also works for countable alphabet. The proof in \cite{coel} does too, with some modifications. The proof in \cite{wal} relies on a lemma from \cite{bou} which is hard to generalize for a non-compact setting. In this respect, we show Sinai's original construction can be used to find a cohomologous one-sided Walters function to a two-sided Walters function on non compact shift spaces (section \ref{waltersection}).
\subsection{Bernoullicity}
A \textit{Bernoulli scheme} with finite probability vector $(p_{a})_{a\in S}$ is the  left shift $T$ on $S^{\mathbb{Z}}$ with the Borel $\sigma$-algebra $\mathcal{B} (S^{\mathbb{Z}})$ generated by cylinders and $\mu_{p}(_{m}[a_{m},...,a_{n}]):= p_{a_{m}}\hdots p_{a_n}$. Bernoulli schemes are a model of ideal randomness. As such, they were extensively studied. Knowing that a particular system is measure theoretically isomorphic (see \cite{wal2} for definition) to a Bernoulli scheme  gives us complete knowledge of its ergodic properties. We prove isomorphism of equilibrium measures of Walters potential to a product of a Bernoulli schemes and a finite rotation (see theorem \ref{main theorem} below for exact details). 
%%%%
%%%
%%%%
%%%%
%%%%%
%%%%%%
\subsection{Results}
Our main result is the following theorem.
\begin{theorem}\label{main theorem}
Let $\hat \mu$ be an equilibrium measure of a Walters potential $\hat f\in C(\hat X)$ with finite first variation ($\vari_1 \hat f <\infty$) on a two-sided topologically transitive CMS. Assume $\sup \hat f,h_{\hat \mu}(\hat T)< \infty $. Then $(\hat X,\hat {\mathcal{B}}, \hat \mu ,\hat T)$ is measure theoretically isomorphic to the product of a Bernoulli scheme and a finite rotation.
\end{theorem}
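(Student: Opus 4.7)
The plan is to reduce the two-sided statement to the one-sided setting, exploit the spectral decomposition of a topologically transitive CMS into topologically mixing components, and then invoke Ornstein's isomorphism machinery on each such component.

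First I would apply the cohomology theorem promised in Section~\ref{waltersection}: there exist a one-sided Walters function $f:X\to\bb{R}$ and a transfer function $h$ with $\hat f = f\circ\pi + h - h\circ\hat T$. Because coboundaries do not affect integrals against invariant measures, $\hat\mu$ remains equilibrium for $f\circ\pi$, and $\mu:=\pi_*\hat\mu$ is an equilibrium measure for $f$ on $(X,T)$ with the same entropy. Moreover $(\hat X,\hat T,\hat\mu)$ is measurably isomorphic to the natural extension of $(X,T,\mu)$. Since the natural extension of a Bernoulli scheme is Bernoulli and the natural extension commutes with direct products with finite rotations, it suffices to prove that $(X,T,\mu)$ is isomorphic to a Bernoulli scheme times a finite rotation.

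Second, I would use the period-$p$ spectral decomposition of the transitive CMS. There exists $p\geq 1$ and a Borel partition $X=X_0\sqcup\cdots\sqcup X_{p-1}$ with $TX_i=X_{i+1\bmod p}$ such that each $(X_i,T^p)$ is topologically mixing. By $T$-invariance $\mu(X_i)=1/p$, and $\mu_i:=p\,\mu|_{X_i}$ is $T^p$-invariant and equilibrium for the Birkhoff sum $S_p f$ on $(X_i,T^p)$. If each $(X_i,T^p,\mu_i)$ is Bernoulli, then $(X,T,\mu)$ factors as the product of a Bernoulli scheme with the cyclic rotation of order $p$, which is precisely the claim. Third, on a single mixing piece, I would combine the uniqueness of the equilibrium measure and the variational principle proved elsewhere in the paper with Ruelle-Perron-Frobenius for Walters potentials on CMS to obtain a Gibbs-type estimate on cylinders and exponential decay of correlations for the normalized potential. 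The hypotheses $\vari_1\hat f<\infty$, $\sup\hat f<\infty$, and $h_{\hat\mu}<\infty$ keep the thermodynamic quantities finite and supply the bounded distortion underlying the Gibbs estimate.

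The final, and most delicate, step is to upgrade uniform mixing to Bernoullicity. Following Bowen's strategy, I would show that the countable generating partition by $1$-cylinders is very weakly Bernoulli under $\mu_i$, using the Gibbs bounds together with exponential mixing to control the $\bar d$-distance between conditional distributions on long cylinders; Ornstein's isomorphism theorem for countable generating partitions then yields that $(X_i,T^p,\mu_i)$ is measure theoretically isomorphic to a Bernoulli scheme, completing the argument together with the reductions above. I expect the main obstacle here to be verifying the VWB property with an infinite alphabet in the absence of a BIP or specification-style hypothesis: one must approximate measures on long cylinders by products while handling symbols of small measure without losing summability, and the finite first variation assumption on $\hat f$ is exactly what allows uniform control of Birkhoff sums along orbits sharing a common cylinder, so it is hard to imagine doing without it.
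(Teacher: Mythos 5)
Your high-level plan matches the paper's in spirit---spectral decomposition into mixing pieces, cohomology to a one-sided Walters potential, Gibbs estimates from uniqueness and RPF theory, and an Ornstein-theoretic conclusion---but it differs in order and, more importantly, it leans on a fact that is not available here.

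On the order: the paper applies the spectral decomposition \emph{first}, on the two-sided (invertible) shift, and only then passes to the one-sided shift via cohomology for each mixing piece. You reverse this, pushing the spectral decomposition to the one-sided factor. This is not fatal, but the lemma that assembles the Bernoulli pieces into ``Bernoulli $\times$ finite rotation'' (Lemma~\ref{arse}) is stated for ergodic \emph{invertible} transformations, so the factorization step must be carried out at the two-sided/natural-extension level, not at the one-sided level as you phrase it. You need to say: each mixing piece's natural extension is Bernoulli, then apply the ASS lemma to $(\hat X,\hat T,\hat\mu)$.

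The genuine gap is your appeal to ``exponential decay of correlations for the normalized potential.'' For Walters potentials on a countable-state Markov shift with no BIP/big-images hypothesis, no such rate is available; the GRPF theorem (Theorem~\ref{GRPFtheorem}) only gives $\lambda^{-n}\Lp^n 1_{[\underline a]}\to h\,\nu[\underline a]/\!\int h\,d\nu$ \emph{uniformly on compacts}, with no quantitative rate, and positive recurrence does not promise anything stronger. The paper never uses exponential mixing. Instead it establishes the weak Bernoulli property directly by a careful choice of parameters: it fixes a finite sub-alphabet $S^*$ with $\hat\mu(\bigcup_{a\in S^*}[a])>1-\delta$, uses the Gibbs-type bound from Corollary~\ref{corollary2uniqueness} on that finite sub-alphabet, picks $m=m(\delta)$ from the Walters oscillation of the normalized potential $\phi^*$, chooses a finite collection $\gamma$ of $m$-cylinders of large measure and points $x(\underline c)$ in them, and then uses the mere \emph{pointwise} convergence $\Lps^k 1_{[\underline c]}(x(\underline c'))\to\mu[\underline c]$ to select a uniform $K(\delta)$. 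The main-term/error-term estimate for $|\hat\mu(A\cap B)-\hat\mu(A)\hat\mu(B)|$ is then carried out with these data and no decay rate enters at all. Without this substitute argument, your route via $\bar d$-distance and VWB does not close: you cannot control the $\bar d$-distance uniformly over long blocks from the information the RPF theorem actually provides. Finally, the paper verifies the \emph{weak Bernoulli} (WB) condition rather than VWB; both suffice for Friedman--Ornstein, but the distinction matters because the WB estimate is precisely what the parameter-choice argument produces.
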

Note that if we assume $\sup \hat f < \infty$ then  $h_{\hat \mu}(\hat T)<\infty$ is equivalent to having finite Gurevich pressure, $P_G(\hat f) <\infty$ (see section \ref{GRPF} for definition, this follows instantly from the variational principle, theorem \ref{variational principle}). Results similar to ours can be found in \cite{bow}, \cite{walreverse}, \cite{sar2}, \cite{berbee} and \cite{rat}. Our results assume very little - we only assume our potential is Walters with finite first variation (as opposed to summable variations in \cite{sar2}). We do not assume compactness, as opposed to \cite{walreverse}. We use a different conditions than \cite{berbee}.\\
We prove that every two-sided Walters potential is cohomologous to a one-sided potential. This is theorem \ref{waltercohomology}. We were also able to prove a similar result for potentials with summable variations, following \cite{coel} (we show the compactness assumed there can be removed). This is theorem \ref{coelhotheorem}.\\
We also prove that an equilibrium measure, if exists, is unique. This was proved in \cite{BuzSar} for summable variations potentials (see section \ref{uniqueness} for precise statement). Also, we prove a variational principle for Walters potentials on non compact (i.e. countable) TMS. This is theorem \ref{variational principle}.
\subsection{Main idea and organization of the proof}
To prove theorem \ref{main theorem} we go through several steps. First, we show we may restrict ourselves to to topologically mixing TMS. In this case we show isomorphism to a Bernoulli scheme (without the finite rotation factor, this is theorem \ref{reduced theorem}). The reduction is stated and proved in section \ref{transection}, using the spectral decomposition. From there on we only concern ourselves with the reduced case of topologically mixing CMS. In section \ref{waltersection} we prove that functions that are Walters with finite first variation are cohomologous to one-sided Walters functions. Section \ref{GRPF} presents the machinery that is used in section \ref{uniqueness}. There, the uniqueness of equilibrium measures is established (theorem \ref{uniqueness theorem}). What we actually need is corollary \ref{corollary2uniqueness}. This corollary gives us important information on equilibrium measures for one-sided shift spaces (of course, with a corresponding one-sided potential). The key is to understand how we can relate two-sided equilibrium measures to one-sided equilibrium measures. This is explained in the beginning of section \ref{Ornstein} (the original idea is due to Sinai, \cite{sinai}). Having established the relation between one-sided and two-sided equilibrium measures, we use corollary \ref{corollary2uniqueness} (stated originally for one-sided equilibrium measures) to prove the Bernoullicity of the two-sided equilibrium measure, using Ornstein theory. Then we turn to show that cohomology to a one-sided function can be done for two-sided potentials with summable variations, giving rise to a one-sided potential (which is also of summable variations). This is done, again, in a non-compact setting using the proof in \cite{coel}.
%%%
%%%
\section{Reduction to the topologically mixing case}\label{transection}
Suppose we know the following is true:
\begin{theorem}\label{reduced theorem}
Let $\hat \mu$ be an equilibrium measure of a Walters potential $\hat f\in C(\hat X)$ with finite first variation ($\vari_1 \hat f <\infty$) on a two-sided topologically mixing CMS. Assume $\sup \hat f <\infty$, $h_{\hat \mu}(\hat T)$ and $\int \hat f d\hat\mu< \infty $. Then $(\hat X,\hat {\mathcal{B}}, \hat \mu ,\hat T)$ is measure theoretically isomorphic to a Bernoulli scheme.
\end{theorem}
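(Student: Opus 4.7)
The plan is to reduce the two-sided system to a one-sided shift via cohomology, derive Ruelle--Perron--Frobenius structure on the one-sided factor, and then lift Bernoullicity to the natural extension through Ornstein theory. First I would apply Theorem \ref{waltercohomology} to obtain a one-sided Walters potential $f\in C(X)$ and a transfer function $h$ on $\hat X$ with $\hat f = f\circ \pi + h - h\circ \hat T$. Since coboundaries integrate to zero against any $\hat T$-invariant probability measure and leave entropy unchanged, the pushforward $\mu := \hat\mu\circ \pi^{-1}$ is a $T$-invariant probability with
\[
h_{\hat\mu}(\hat T) + \int \hat f\, d\hat\mu = h_{\mu}(T) + \int f\, d\mu,
\]
and the analogous identity for competitors --- using that every $T$-invariant probability on $X$ lifts to a $\hat T$-invariant probability on $\hat X$ --- shows that $\mu$ is an equilibrium measure of $f$ on $(X,T)$. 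Moreover $(\hat X,\hat\mu,\hat T)$ is the natural extension of $(X,\mu,T)$, so the problem is now to prove Bernoullicity of this natural extension.

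Next I would invoke Corollary \ref{corollary2uniqueness} to conclude that $\mu$ is the unique equilibrium measure of $f$. The Gurevich pressure machinery of section \ref{GRPF} then supplies $\lambda := \exp P_G(f) > 0$, a positive continuous eigenfunction $h_{+}$ with $L_f h_{+} = \lambda h_{+}$, and a conservative reference measure $\nu$ with $L_f^* \nu = \lambda \nu$, normalized so that $d\mu = h_{+}\, d\nu$. The Walters property together with $\vari_1 \hat f < \infty$ (which transfers to $f$) yields uniform distortion bounds on $\nu$-measures of long cylinders, i.e. a Gibbs-type estimate, from which one deduces that the one-sided system $(X,\mu,T)$ is exact, and hence that the natural extension $(\hat X,\hat\mu,\hat T)$ has the $K$-property.

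Finally, by Ornstein's isomorphism theorem it suffices to verify that $\hat\mu$ is very weakly Bernoulli with respect to the countable generating partition into time-zero state cylinders. Concretely, for each $\epsilon>0$ and each finite collection of states $F\subset S$, one must show that for $N$ sufficiently large the $\hat\mu$-conditional distributions on $F$-valued words of length $n$, given arbitrary pasts indexed by coordinates $-N,\ldots,-1$, are within $\bar d$-distance $\epsilon$ of the unconditional distribution, uniformly in $n$ and in the past. These conditional distributions are governed by iterates of the normalized transfer operator $\lambda^{-1}h_{+}^{-1} L_f(h_{+}\,\cdot\,)$, so the Walters contraction of variation combined with the Gibbs bounds on $\nu$ produces the required $\bar d$-estimate. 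The main obstacle is precisely this very weakly Bernoulli estimate in the non-compact setting: in the finite alphabet case the normalized operator contracts uniformly on all observables of controlled variation, but here an excursion into high-index states can destroy uniformity, so controlling the $\hat\mu$-measure of such excursions and converting it into a clean $\bar d$-bound --- using $\vari_1 \hat f < \infty$ and the conservativity of $\nu$ --- is where most of the technical effort must lie.
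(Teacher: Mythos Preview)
Your reduction to the one-sided shift is exactly what the paper does: apply Theorem~\ref{waltercohomology}, push $\hat\mu$ forward to $\mu$ on $X$, and observe that $(\hat X,\hat\mu,\hat T)$ is the natural extension of $(X,\mu,T)$, so Bernoullicity upstairs follows from a suitable Ornstein-type criterion downstairs. That part matches.

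Where you diverge is in the Ornstein step itself. You propose to verify the \emph{very weak Bernoulli} condition with respect to the countable state partition via $\bar d$-estimates on conditional distributions, driven by contraction of the normalized transfer operator. The paper instead proves the stronger \emph{weak Bernoulli} condition for an increasing generating sequence of \emph{finite} partitions $\alpha(\mathcal{V}')=\{[v]:v\in\mathcal{V}'\}\cup\{\bigcup_{v\notin\mathcal{V}'}[v]\}$, and then invokes Friedman--Ornstein. The engine is not a global Gibbs bound or contraction estimate, but the cylinder-ratio inequality of Corollary~\ref{corollary2uniqueness}: for any finite $S^*\subset S$ there is $C^*=C^*(S^*)$ with $\mu[\underline a,\underline c]/(\mu[\underline a]\mu[\underline c])\in[(C^*)^{-1},C^*]$ whenever the interface letter lies in $S^*$. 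One fixes $\delta$, chooses $S^*$ capturing all but $\delta$ of the mass, normalizes $\phi$ to $\phi^*$, and uses the GRPF convergence $\lambda^{-n}L_\phi^n 1_{[\underline c]}\to h\,\nu[\underline c]$ together with the Walters bound $\sup_n\vari_{n+m}\phi_n^*<\delta$ to estimate $\sum_{A,B}|\hat\mu(A\cap B)-\hat\mu(A)\hat\mu(B)|$ directly; the ``excursion into high-index states'' problem you flag is handled precisely by the choice of $S^*$ and by splitting the sum into a main term ($a_n,b_0\in S^*$) and an error term controlled by $\hat\mu(\bigcup_{a\notin S^*}[a])<\delta$.

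One point in your sketch is genuinely problematic: you assert a ``Gibbs-type estimate'' on $\nu$-measures of cylinders following from Walters regularity and $\vari_1<\infty$. On a countable-state shift this is generally false without an extra combinatorial hypothesis (e.g.\ the big-images-and-preimages property), and the paper neither claims nor uses it. What Corollary~\ref{corollary2uniqueness} gives is much weaker --- a ratio bound only when the interface letter sits in a prescribed finite set --- and the paper's whole argument is organized around squeezing weak Bernoulli out of exactly that. Also, note that Corollary~\ref{corollary2uniqueness} is not the uniqueness statement; uniqueness is Theorem~\ref{uniqueness theorem}, and the corollary is the decorrelation estimate you actually need. Your $K$-property detour is unnecessary: the paper goes straight to weak Bernoulli.
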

We can use the following lemma in order to show theorem \ref{reduced theorem} implies theorem \ref{main theorem}.
\begin{lemma}[\cite{ASS}]\label{arse}
Let $(X,\mathcal{B},\mu, T)$ be an ergodic invertible probability preserving transformation with a measurable set $X_0$ s.t.
\begin{enumerate}
\item $T^p(X_0) = X_0 \mod \mu$,
\item $X_0,T(X_0),...,T^{p-1}(X_0)$ are pairwise disjoint $\mod \mu$,
\item $T^p: X_0 \to X_0$ equipped with $\mu(\cdot|X_0)$ is a Bernoulli automorphism
\end{enumerate}
Then $(X,\mathcal{B},\mu, T)$ is measure theoretically isomorphic to the product of a Bernoulli scheme and a finite rotation. 
\end{lemma}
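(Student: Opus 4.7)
The plan is to realise $(X,\mu,T)$ as a $p$-tower over $(X_0,\mu(\cdot\mid X_0),T^p|_{X_0})$ and then to decouple this tower into a direct product with a $\mathbb{Z}/p$-rotation by extracting a Bernoulli $p$-th root via Ornstein's theorem.

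Write $\nu:=\mu(\cdot\mid X_0)$ and $S:=T^p|_{X_0}$. First, by ergodicity of $T$ the $T$-invariant union $\bigsqcup_{i=0}^{p-1}T^iX_0$ has full measure (it has positive measure because $\nu$ is a probability), and invariance of $\mu$ together with the pairwise disjointness forces $\mu(T^iX_0)=1/p$ for every $i$. The map $\Theta:X\to X_0\times\mathbb{Z}/p$ that sends $T^iy$ to $(y,i)$ (for $y\in X_0$, $0\le i<p$) is then a measure-preserving bijection onto $(\nu\otimes u_p)$, with $u_p$ uniform on $\mathbb{Z}/p$, and it conjugates $T$ to the skew map $R$ defined by $R(y,i)=(y,i+1)$ for $i<p-1$ and $R(y,p-1)=(Sy,0)$.

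Second, invoke Ornstein's isomorphism theorem together with the existence of Bernoulli shifts of every entropy in $[0,\infty]$: choose a Bernoulli scheme $(Z,\lambda,U)$ with $h_\lambda(U)=h_\nu(S)/p$. Then $U^p$ is Bernoulli of entropy $h_\nu(S)$, and hence measurably isomorphic to the Bernoulli automorphism $S$; fix a measure-preserving isomorphism $\Phi:(X_0,\nu)\to(Z,\lambda)$ with $\Phi\circ S=U^p\circ\Phi$.

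Finally, define $\Psi:X_0\times\mathbb{Z}/p\to Z\times\mathbb{Z}/p$ by $\Psi(y,i):=(U^i\Phi(y),i)$. This is manifestly measure preserving onto $\lambda\otimes u_p$, and a two-case computation -- trivial for $i<p-1$, and using $\Phi\circ S=U^p\circ\Phi$ in the wrap-around case $i=p-1$ -- shows $\Psi\circ R=(U\times\sigma_p)\circ\Psi$, where $\sigma_p:i\mapsto i+1\bmod p$. Composing with $\Theta$ delivers the required isomorphism of $(X,\mathcal{B},\mu,T)$ with $(Z,\lambda,U)\times(\mathbb{Z}/p,u_p,\sigma_p)$. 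The only deep input is Ornstein's theorem (in its Bernoulli $p$-th root form); the instructive obstacle -- and the reason one cannot take $(Z,U)=(X_0,S)$ directly -- is an entropy mismatch, since $R^p=S\times\mathrm{id}$ has entropy $h_\nu(S)$ while $(S\times\sigma_p)^p=S^p\times\mathrm{id}$ has entropy $p\cdot h_\nu(S)$, so that only a genuine $p$-th root $U$ of $S$ makes the $p$-th powers agree.
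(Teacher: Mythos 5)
The paper offers no proof of this lemma; it is cited verbatim from \cite{ASS}. Your proposal is correct and is the classical argument behind that reference: you pass to the $p$-tower representation over the base $(X_0,\nu,S)$, then extract a Bernoulli $p$-th root $U$ of $S$ via Ornstein's theorems (powers of Bernoulli automorphisms are Bernoulli, entropy is a complete isomorphism invariant for Bernoulli, and Bernoulli schemes exist in every entropy class including $\infty$), and finally verify that $\Psi(y,i)=(U^i\Phi(y),i)$ conjugates the tower map to $U\times\sigma_p$. The two-case check of the conjugacy is accurate, and the closing entropy remark -- that the tower map has entropy $h_\nu(S)/p$ whereas $S\times\sigma_p$ has entropy $h_\nu(S)$, which is why a genuine $p$-th root is unavoidable -- correctly pinpoints the one place where Ornstein theory is indispensable. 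This matches the standard proof one would find in \cite{ASS}; there is no gap.
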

Let $\hat X, \hat \phi$ etc. be as in theorem \ref{main theorem}. By the spectral decomposition (Remark 7.1.35 in \cite{kitchen}), there exist $\hat X_0,\hat X_1,...\hat X_{p-1},p\in \bb{N}$ s.t $X_i$ are pairwise disjoint $\mod \hat \mu, \hat T(\hat X_i) = \hat X_{i+1 \mod p}$ and $(\hat X_i, \hat T^p)$ is topologically mixing. Since we assume we know theorem \ref{reduced theorem} to be true, this implies $\hat T^p$ is Bernoulli. It is known that $\hat \mu$, as an equilibrium measure, is ergodic \cite{sar} theorem 4.7, so the hypotheses in lemma \ref{arse} are satisfied and \ref{main theorem} holds. Thus, wlog, we may restrict ourselves to topologically mixing TMS and prove (under the conditions of theorem \ref{reduced theorem}) they are measure theoretically isomorphic to Bernoulli schemes. 
%%%
%%%
\section{Cohomology to one-sided function - Walters case}\label{waltersection}
Let $(Y,S)$ be a dynamical system on $Y$, a  metric space. We define \textit{Bowen's metric at time $n$} as follows. $d_{n}(x,y) :=\max _{0\leq k<n}d(T^{k}x,T^{k}y)$. Now we let $g: Y\to \bb{R}$. We say $g$ is \textit{Walters} (satisfies Walters condition, has the Walters property) \cite{walterscondition} if $\forall \epsilon >0,\ \exists \delta>0, \ s.t. \ \forall n\geq 1, \ \forall x,y \in Y: \ d_{n}(x,y) < \delta \Rightarrow |f_{n}(x) -f_{n}(y)| <\epsilon$. The careful reader may check that this definition specializes to the definitions we'll present (and use) for the special case of TMS. Details can be found in e.g. \cite{bou}.

We use hats (e.g. $\hat x\in \hat X$, $\hat T$ etc.) in order to distinguish objects defined using the two-sided shift space from ones defined on the one-sided shift space. When no confusion may arise, we might drop the hats. Let $f:X \to \bb{R}$. Its $n$th \textit{variation} is defined as $\vari_{n} f:= \sup\{| f(x)-f(y)|:x_i = y_i\ \forall  0\leq i \leq n-1 \}$. Now let $\hat f:\hat X \to \bb{R}$. Its $[-k,n+k]$ \textit{asymmetric variation} is defined as $\vari_{[-k,n+k]} \hat f:= \sup\{|\hat f(\hat x)-\hat f(\hat y)|:x_i = y_i\ \forall  -k\leq i \leq n+k-1 \}$. Denote the $n$th Birkhoff sum of $f$ by $f_n = \sum_{i=0}^{n-1} f\circ T^i$. This is how we the above definition specializes for CMSs:
\begin{definition}\label{walters1definition}
Let $(X,T)$ be a one-sided CMS, $f\in C(X)$. $f$ is said to satisfy Walters condition if $\lim\limits_{k\to \infty}\sup_{n\geq 1} \vari_{n+k} f_n = 0$ and $\forall k\geq 1, n\geq 1, \vari_{n+k} f_n <\infty$.\\
\end{definition}
\begin{definition}\label{walters2definition}
Let $(\hat X,\hat T)$ be a two-sided CMS, $\hat f\in C(\hat X)$. $\hat f$ is said to satisfy Walters condition if $\lim\limits_{k\to \infty}\sup_{n\geq 1} \vari_{[-k,n+k]} \hat f_n =0$ and $\forall k\geq 1, n\geq 1, \vari_{[-k,n+k]} \hat f_n <\infty$.
\end{definition}
If $f$ is Walters, then it is uniformly continuous. However, it need not be bounded.
\begin{theorem}\label{waltercohomology}
Let $(\hat X,\hat T)$ be a two-sided TMS (possibly with countable alphabet). Let $\hat f$ be Walters with $\vari_{1} <\infty$. Then there exists a one-sided $f:X\to \bb{R}$ that is also Walters s.t $\hat f +h- h\circ T= f \circ \pi$ where $h$ is bounded and uniformly continuous.
\end{theorem}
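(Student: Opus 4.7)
The plan is to adapt Sinai's section-and-telescope construction, but working with Birkhoff-sum differences (rather than term-by-term ones) to compensate for the lack of summability of variations in the Walters regime. By topological transitivity, for each state $a\in S$ choose a one-sided past $\bar w^a=(\dots,w^a_{-2},w^a_{-1})$ with $A_{w^a_{-1}\,a}=1$, and define $\sigma:\hat X\to\hat X$ by $\sigma(\hat x)_j=x_j$ for $j\geq0$ and $\sigma(\hat x)_j=w^{x_0}_j$ for $j<0$. Then $\sigma(\hat x)$ depends only on the non-negative coordinates of $\hat x$ and agrees with $\hat x$ on $[0,\infty)$. Abbreviate $\gamma(k):=\sup_{n\geq1}\vari_{[-k,n+k]}\hat f_n$, so Walters condition reads $\gamma(k)\to0$, and write $V:=\vari_1\hat f$.

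Define the candidate transfer function as the limit of Birkhoff-sum differences,
\[
h(\hat x):=\lim_{N\to\infty}\bigl[\hat f_N(\sigma(\hat x))-\hat f_N(\hat x)\bigr].
\]
Convergence is the key point: for $N'>N$ the increment equals $\hat f_{N'-N}(\hat T^N\sigma(\hat x))-\hat f_{N'-N}(\hat T^N\hat x)$, and because $\sigma(\hat x)$ and $\hat x$ agree on $[0,\infty)$, the shifted points $\hat T^N\sigma(\hat x)$ and $\hat T^N\hat x$ agree on $[-N,\infty)$; the Walters estimate then bounds the increment by $\gamma(N)\to0$, so the partial sums are uniformly Cauchy in $\hat x$. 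Isolating the $i=0$ term gives the uniform bound $|h(\hat x)|\leq V+\gamma(1)$. For uniform continuity, if $\hat x,\hat y$ agree on $[-k,k]$ then $x_0=y_0$, so $\sigma(\hat x)$ and $\sigma(\hat y)$ agree on $(-\infty,k]$; splitting each Birkhoff sum at a midpoint $N_0\approx k/2$ and applying the Walters estimate to both pieces yields a modulus of continuity that vanishes as $k\to\infty$.

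Now set $F:=\hat f+h-h\circ\hat T$. A direct computation of the partial sums shows
\[
F_N(\hat x)=\hat f_N(\sigma(\hat x))-\hat f_N(\sigma(\hat T\hat x))+\hat f(\hat T^N\hat x),
\]
and $F(\hat x)=\lim_N F_N(\hat x)$ exists because both $h(\hat x)$ and $h(\hat T\hat x)$ do. To see $F$ factors through $\pi$, take $\hat x,\hat y$ with $\pi\hat x=\pi\hat y$: then $\sigma(\hat x)=\sigma(\hat y)$ and $\sigma(\hat T\hat x)=\sigma(\hat T\hat y)$, so the only possible discrepancy comes from the $\hat f(\hat T^N\,\cdot)$ term, and $|\hat f(\hat T^N\hat x)-\hat f(\hat T^N\hat y)|\leq\vari_{[-N,N+1]}\hat f\to0$ by Walters. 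Hence $F=f\circ\pi$ for some $f:X\to\bb{R}$. Finally, $f$ inherits Walters: for $x,y\in X$ agreeing on $[0,n+k-1]$, lift to $\hat x,\hat y$ with a common past so that they agree on $[-k,n+k-1]$; then $\vari_{n+k}f_n$ is controlled by $\vari_{[-k,n+k]}\hat f_n\leq\gamma(k)$ plus two coboundary contributions involving $h$, both of which are small by the uniform continuity of $h$.

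The main obstacle is exactly where the argument departs from the H\"older/summable-variations cases: one cannot define $h$ by a term-by-term series, because $\vari_{[-i,i+1]}\hat f$ need not be summable under Walters alone. The resolution is to treat $\hat f_N(\sigma(\hat x))-\hat f_N(\hat x)$ as one Birkhoff-sum object and to use the Walters condition against the symmetric window $[-N,\infty)$ that grows with $N$; the assumption $\vari_1\hat f<\infty$ is what tames the $i=0$ contribution that falls outside the Walters tail.
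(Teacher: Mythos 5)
Your proposal is correct and follows essentially the same route as the paper, namely Sinai's construction: your map $\sigma$ is exactly the paper's $\hat x\mapsto\bar x$, and your $h(\hat x)=\lim_N[\hat f_N(\sigma\hat x)-\hat f_N(\hat x)]$ is (up to sign) the paper's series $\sum_{i\geq 0}[\hat f(\hat T^i\hat x)-\hat f(\hat T^i\bar x)]$ viewed as a limit of its partial sums, with the uniform Cauchy property established in the same way via the Walters estimate on the shifted Birkhoff sums. Two small notes: the appeal to topological transitivity is unnecessary (every state appearing in a two-sided TMS already has an admissible left-infinite past, which is all $\sigma$ needs), and the closing remark that ``one cannot define $h$ by a term-by-term series'' overstates the point — the paper does define $h$ as exactly that series; what fails under Walters alone is the term-by-term absolute-convergence argument, which both you and the paper replace by bounding the tails as Birkhoff-sum differences.
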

\begin{proof}
Following Sinai \cite{sinai}, for every $a \in S$ we define $z^a$ - some arbitrary left infinite sequence that can precede $a$. Let $\hat x\in \hat X$. Define $\bar x$ to satisfy $(\bar x_{i})_{-\infty}^{-1} = z^{\hat x_{0}}$, $ (\hat x_{i})_{0}^{\infty} = (\bar x_{i})_{0}^{\infty}$. Let
$$
h(\hat x) := \sums.
$$
We claim that $h$ is well defined, uniformly continuous and bounded. To see this, note that $\hat T$ is uniformly continuous - for $\epsilon > 0$ choose $\delta = \eps / 2$. $\hat x \mapsto \bar x$ is just a projection and also uniformly continuous.

Denote $H_k(\hat x) := \sum_{i=0}^{k-1} \hat f(\hat T^i\hat x) -\hat f (\hat T^i\bar x)$. $H_k$ is uniformly continuous as a sum and compositions of such. Now we show the series  $H_{k}$ is uniformly Cauchy. Let $\epsilon > 0$. We want to find $N$ so that $n_1 \geq N$ and $k > 0$ imply  $|\sum_{i=n_1}^{n_1+k-1} [\hat f(\hat T^i\hat x) - \hat f(\hat T^i\bar x)]| <\eps$. Since $\hat f \in W_{}(\hat X,\hat T)$, there exists $M\geq 1$ such that if $(\hat x_{i})_{-M}^{M+n} = (\hat y_{i})_{-M}^{M+n}$ then  $|\sum_{i=0}^{n-1} \hat f(\hat T^i\hat x) -\hat f(\hat T^i\hat y)|<\epsilon$. Let $N=M+1$. For $n_1 \geq N$,  $(\hat T^{n_1}\hat x)_{-M}^{\infty} = (\hat T^{n_1}\bar x)_{-M}^{\infty}$ and we get  $|\sum_{i=0}^{k-1} \hat f(\hat T^i(\hat T^{n_1}\hat x)) -\hat f(\hat T^i(\hat T^{n_1}\bar x))|<\epsilon$ for all $k\geq 1$. Some rephrasing gives $|\sum_{i=n_1}^{n_1+k-1} [\hat f(\hat T^i\hat x) - \hat f(\hat T^i\bar x)| <\epsilon$ for all $k\geq 1$, showing the uniform Cauchy property of $\{H_k\}_{k=1}^{\infty}$. This shows the uniform continuity of $h$ (clearly it is well defined).

By the uniform Cauchy property of $\{H_k\}_{k=1}^{\infty}$,  $H_{k} \rightarrow h$ uniformly and there's some $k$ for which $|h(\hat x) - H_k(\hat x)| <1, \forall \hat x \in \hat X$. For that $k$, we see that  $H_k(\hat x) \leq k\cdot\vari_{1}f <\infty $. This shows $h$ is bounded.

Now we turn to construct the appropriate transfer function and prove the cohomology. Let $\hat x \in \hat X$. Then,
\begin{eqnarray*}
\hat f (\hat x) - h(\hat x) + h(\hat T\hat x)&=& \hat f(\hat x) - \sum_{i=0}^{\infty} \hat f(\hat T^i\hat x) -\hat f(\hat T^i\bar x) + \sum_{i=0}^{\infty} \hat f(\hat T^i[\hat T\hat x]) -\hat f(\hat T^i\overline {[\hat T\hat x]})\\
&=& \hat f(\bar x)- \sum_{i=1}^{\infty} \hat f(\hat T^i\hat x) -\hat f(\hat T^i\bar x) + \sum_{i=0}^{\infty} \hat f(\hat T^i[\hat T\hat x]) -\hat f(\hat T^i\overline{[\hat T\hat x]})\\
&=& \hat f(\bar x)- \sum_{i=0}^{\infty} \hat f(\hat T^i[\hat T\hat x]) -\hat f(\hat T^i[\hat T\bar x]) + \sum_{i=0}^{\infty} \hat f(\hat T^i[\hat T\hat x]) -\hat f(\hat T^i\overline{[\hat T\hat x]})\\
&=& \hat f(\bar x) -  \sum_{i=0}^{\infty} \hat f(\hat T^i\overline{[\hat T\hat x]}) -\hat f(\hat T^i[\hat T\bar x])\\
\end{eqnarray*}
Since the bottom expression depends only on positive coordinates (the appended $z^a$ were completely arbitrary - they were just required to let $\bar x$ be admissible) we get:
$$
\hat f (\hat x) -h(\hat x) + h(\hat T\hat x) = (f\circ \pi)(\hat x),
$$
for some $f: X \to \bb{R}$ (one-sided). Now we turn to show $f$ is Walters.
$$
\sup_{n\geq 1}[\vari_{[-k,n+k]} (\hat f - h + h\circ T)_n] \leq \sup_{n\geq 1}[\vari_{[-k,n+k]}\hat f_n] + \sup_{n\geq 1}[\vari_{[-k,n+k]} (h - h\circ T)_n].
$$
The first summand approaches zero by the Walters property of $\hat f$. As for the second, note that $(h - h\circ T)_n =h - h\circ T^n$.
\begin{eqnarray*}
\sup_{n\geq 1}[\vari_{[-k,n+k]} (h - h\circ T^n)]
&\leq& \sup_{n\geq 1}[\vari_{[-k,n+k]} h] + \sup_{n\geq 1}[\vari_{[-k,n+k]} h\circ T^n]\\
&\leq& 2\cdot \vari_kh \longrightarrow 0
\end{eqnarray*}
by uniform continuity of $h$. Since $h$ is bounded, $\vari_{n+k}f_n < \infty$ for all $k\geq 1$. So $f$ is Walters.
\end{proof}
The assumption that $\vari_1 \hat f <\infty$ is not too restricting since the Walters property implies $\vari_2 \hat f<\infty$ and we can recode the shift space using 2-cylinders. Then we see that in the new space $\hat f$ has finite first variation.

\section{The GRPF theorem with some consequences}\label{GRPF}
We will rely on the Generalized Ruelle-Perron-Frobenius theorem. This was originally proved in \cite{GRPF} for weakly H\"older potentials. The generalization to Walters potentials which we use may be found in \cite{sar}. Note that in this section we are solely concerned with one-sided TMS.
\begin{definition}[Non-singular maps] A measurable map $T$ on a measure space $(X,\cal{B},\mu)$ is called {\em non-singular} if $\mu \circ T_{-1}(E) = 0 \leftrightarrow \mu (E) =0$.
\end{definition}
We will also call a measure non-singular and mean the same.
\begin{definition}
Suppose $\mu$ is a non - singular measure on $X$ with set of states $S$. We let $\mu \circ T$ denote the measure on $X$ given by $(\mu \circ T)(E) := \sum_{a\in S} \mu[T(E \cap [a])]$.
\end{definition}
Note that generally $\mu(TE) \not = (\mu \circ T)(E)$. The following are standard facts:
\begin{fact}\label{nu comp T}
Let  $\nu$ be a non-singular measure on a TMS $X$. Then for all non-negative Borel functions $f:X\to\bb{R}$,
$$
\int_X f d\nu \circ T = \sum_{a\in S} \int_{T[a]}f(ax) d\nu(x)
$$
\end{fact}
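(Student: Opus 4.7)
The plan is to establish the identity by the standard measure-theoretic three-step bootstrap: verify it for indicator functions, extend by linearity to non-negative simple functions, then pass to all non-negative Borel functions via monotone convergence.

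For $f = \mathbf{1}_E$ with $E\subseteq X$ Borel, the left-hand side is $(\nu\circ T)(E) = \sum_{a\in S}\nu[T(E\cap[a])]$ directly from the definition given just above the Fact. For the right-hand side, the key observation is that for each state $a\in S$, the shift $T$ restricted to the cylinder $[a]$ is a bijection onto $T[a]$ whose inverse is the ``prepend $a$'' map $x\mapsto ax$, and $ax$ lies in $[a]$ precisely when $x\in T[a]$ (i.e.\ when $ax$ is admissible). Consequently $\{x\in T[a] : ax\in E\} = \{x\in T[a] : ax\in E\cap[a]\} = T(E\cap[a])$, so $\int_{T[a]} \mathbf{1}_E(ax)\,d\nu(x) = \nu[T(E\cap[a])]$. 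Summing over $a\in S$ matches the left-hand side.

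Linearity of the integral immediately upgrades the identity to non-negative simple functions. For general non-negative Borel $f$, I would pick an increasing sequence of simple functions $f_n\uparrow f$ and apply monotone convergence: on the left to $\int_X f_n\,d(\nu\circ T)$, and on the right term by term to each $\int_{T[a]} f_n(ax)\,d\nu(x)$, using monotone convergence once more (equivalently, the countable Tonelli theorem) to interchange $\lim_n$ with $\sum_{a\in S}$. There is no real obstacle; the only conceptual ingredient is the Markov-structure bijection $[a]\leftrightarrow T[a]$ that legitimises the ``change of variables'' $y=ax$, with the rest being standard measure-theoretic bookkeeping that is insensitive to $|S|$ being countable rather than finite.
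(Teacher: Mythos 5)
Your proof is correct and takes essentially the same approach as the paper: verify the identity for indicator functions by unwinding the definition of $\nu\circ T$ together with the bijection $[a]\leftrightarrow T[a]$ given by $x\mapsto ax$, then extend to general non-negative Borel functions by the standard simple-function/monotone-convergence bootstrap. The paper's version is terser (it just states that the extension from indicators to Borel functions holds), but the underlying argument is identical.
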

\begin{proof}
We show for indicator functions. The same holds for any integrable Borel function.
\begin{eqnarray*}
\int 1_E d\nu \circ T &=& \sum_{a\in S} \int 1_{T(E\cap [a])} d\nu =\sum_{a\in S} \int 1_{\{x: T^{-1}x \in E\cap [a]\}} d\nu (x)\\
&=& \sum_{a\in S} \int 1_{\{T^{-1}x \in E\}} 1_{\{T^{-1}x \in [a]\}} d\nu (x)= \sum_{a\in S} \int _{T[a]}1_{\{T^{-1}x \in E\}} d\nu (x)\\
&=& \sum_{a\in S} \int_{T[a]}1_E(ax) d\nu (x)\\
\end{eqnarray*}
\end{proof}
\begin{fact}
Let $\nu$ be a non-singular measure on a TMS. Then $\nu \ll \nu \circ T$.
\end{fact}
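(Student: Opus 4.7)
The plan is to establish the contrapositive-style statement directly: suppose $(\nu \circ T)(E) = 0$ for some Borel $E \subseteq X$; we will show $\nu(E) = 0$. The key observation is that the cylinders $\{[a]\}_{a \in S}$ form a countable Borel partition of $X$, so $E = \bigsqcup_{a \in S} (E \cap [a])$ and it suffices to prove $\nu(E \cap [a]) = 0$ for each symbol $a$.

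First I would unpack the definition of $\nu \circ T$: the hypothesis $(\nu \circ T)(E) = 0$ forces $\nu[T(E \cap [a])] = 0$ for every $a \in S$, since each summand in the defining sum is non-negative. Next, apply the easy direction of non-singularity, namely $\nu(F) = 0 \Rightarrow \nu(T^{-1}F) = 0$, taking $F := T(E \cap [a])$. This yields $\nu(T^{-1}T(E \cap [a])) = 0$. Since trivially $E \cap [a] \subseteq T^{-1}T(E \cap [a])$, monotonicity of $\nu$ gives $\nu(E \cap [a]) = 0$, and summing over the countable alphabet finishes the argument.

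The only mild obstacle is a measurability check: to apply $\nu$ to $T(E \cap [a])$ and then invoke non-singularity on this set, I need $T(E \cap [a])$ to be Borel. This is harmless because $T$ restricted to the clopen cylinder $[a]$ is a homeomorphism onto $T[a] = \bigcup_{b : A_{ab} = 1} [b]$ — the shift simply deletes the first coordinate, which is pinned to $a$ on $[a]$ — so $T([a])$ is clopen and $T$ maps Borel subsets of $[a]$ to Borel subsets of $T[a]$. The whole proof is thus a short bookkeeping exercise; the non-singularity hypothesis does all the real work, and the cylinder decomposition is what allows us to pass from the non-injective global map $T$ to injective restrictions on which non-singularity can be cleanly applied.
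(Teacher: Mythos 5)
The paper states this fact without giving a proof, so there is no in-paper argument to compare against. Your proof is correct and is the natural one: you decompose $E$ along the $1$-cylinders to reduce to the injective branches $T|_{[a]}$, use the elementary inclusion $E \cap [a] \subseteq T^{-1}T(E\cap[a])$, and then apply the direction $\nu(F)=0 \Rightarrow \nu(T^{-1}F)=0$ of the paper's (two-sided) non-singularity definition; the measurability of $T(E \cap [a])$ is correctly justified by the observation that $T$ restricted to a $1$-cylinder is a homeomorphism onto a clopen union of cylinders.
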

\begin{definition}[Jacobian] Let $\mu$ be a non-singular Borel measure on $X$. $g_{\mu}:= \frac{d\mu}{d\mu \circ T}$ is called $\mu$'s {\em Jacobian}. If $\mu \sim \mu \circ T$, then $\log g_{\mu}$ is called the {\em log-Jacobian}.
\end{definition}
\begin{definition}[The transfer operator]. The {\em transfer operator} of a non-singular measurable map on a $\sigma$-finite measure space $(X,\mathcal{B},\mu)$ is the operator $\widehat{T_{\mu}}: L^1(X,\mathcal{B},\mu) \to L^1(X,\mathcal{B},\mu)$ defined by:
$$
\widehat{T_{\mu}}f:= \frac{d\mu_f \circ T^{-1}}{d\mu}, \text{ where } d\mu_f:= fd\mu.
$$
\end{definition}
One can check the transfer operator is well defined as a Radon-Nikodym derivative.
\begin{fact}[Formula for the transfer operator] Suppose $X$ is a TMS and $\mu$ is $T$ non-singular. Then the transfer operator of $\mu$ is given by
$$
(\widehat{T_{\mu}}f)(x) = \sum_{Ty=x} \frac{d\mu}{d\mu \circ T}(y)f(y)
$$
\end{fact}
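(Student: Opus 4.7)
My plan is to verify the defining Radon--Nikodym identity directly. Writing $F(x) := \sum_{Ty=x} g_\mu(y) f(y)$, it suffices to show that for every Borel set $E$,
$$\int_E F\, d\mu \;=\; \mu_f(T^{-1}E) \;=\; (\mu_f\circ T^{-1})(E),$$
since by uniqueness of the Radon--Nikodym derivative this forces $F = d(\mu_f\circ T^{-1})/d\mu = \widehat{T_\mu}f$. By splitting $f=f_+-f_-$ and using linearity, I may and will assume $f\geq 0$ throughout.

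The first step is to rewrite $F$ in the form to which Fact \ref{nu comp T} applies. In a TMS the preimage satisfies $T^{-1}\{x\} = \{ay : a\in S,\ ax\in X\} = \{ax : x\in T[a]\}$, so
$$F(x) \;=\; \sum_{a\in S} 1_{T[a]}(x)\, g_\mu(ax)\, f(ax),$$
which is measurable as a countable sum of non-negative measurable functions. Integrating over $E$ and interchanging the sum with the integral by monotone convergence,
$$\int_E F\, d\mu \;=\; \sum_{a\in S} \int_{T[a]} 1_E(x)\, g_\mu(ax)\, f(ax)\, d\mu(x).$$
With $h(y) := 1_E(Ty)\, g_\mu(y)\, f(y)$, the identity $T(ax)=x$ shows $h(ax) = 1_E(x)\, g_\mu(ax)\, f(ax)$, so the displayed right-hand side is exactly $\int h\, d(\mu\circ T)$ by Fact \ref{nu comp T}.

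The second step is to remove the Jacobian. Since $g_\mu = d\mu/d(\mu\circ T)$, the Radon--Nikodym chain rule gives $\int k\, g_\mu\, d(\mu\circ T) = \int k\, d\mu$ for every non-negative measurable $k$; applying this with $k(y):=1_E(Ty)f(y)$ yields
$$\int_E F\, d\mu \;=\; \int 1_E(Ty)\, f(y)\, d\mu(y) \;=\; \int_{T^{-1}E} f\, d\mu \;=\; \mu_f(T^{-1}E),$$
which is the desired identity.

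The proof is essentially bookkeeping and I expect no genuine obstacle; the one point that deserves a moment's care is that because $S$ is countable the sum in the definition of $F$ can be infinite, so one must work with $f\geq 0$ first and invoke monotone convergence to interchange summation and integration, and only afterwards extend to general $f\in L^1(\mu)$ by linearity.
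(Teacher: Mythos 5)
The paper states this as a standard fact and supplies no proof, so there is nothing to compare against; your argument is a correct and natural verification. You check directly that $F(x)=\sum_{Ty=x} g_\mu(y)f(y)$ satisfies the defining identity $\int_E F\,d\mu = \mu_f(T^{-1}E)$ for every Borel $E$, by decomposing the preimage sum over first coordinates, invoking the paper's Fact \ref{nu comp T} to pass to an integral against $\mu\circ T$, and then cancelling the Jacobian $g_\mu = d\mu/d(\mu\circ T)$; the reduction to $f\ge 0$ together with monotone convergence correctly handles the possibly infinite sum over the countable alphabet.
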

\begin{fact}[Properties of the transfer operator]\label{ruelle properties} Let $\mu$ be a non-singular $\sigma$-finite measure on $X$. Then:
\begin{enumerate}
\item If $f \in L^1$, then $\widehat{T_{\mu}} f$ is the unique $L^1$-element s.t. for every $\vp \in L^{\infty}$,
$$
\int \vp \widehat{T_{\mu}} f d\mu = \int \vp \circ T f d\mu
$$
This means that the transfer operator behaves like the adjoint of the Koopman operator (except that it acts on $L^1$, not on $L^2$).
\item $\widehat{T_{\mu}}$ is a bounded linear operator on $L^1$, and $||\widehat{T_{\mu}}|| = 1$.
\item $\widehat{T_{\mu}}^* \mu = \mu$.
\end{enumerate}
\end{fact}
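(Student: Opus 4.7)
The plan is to derive all three assertions directly from the defining identity $\widehat{T_{\mu}} f = d(\mu_f \circ T^{-1})/d\mu$ together with the standard $L^1$--$L^{\infty}$ duality on a $\sigma$-finite measure space. For part (1), I would simply unwrap the definition using the change-of-variables formula for the pushforward measure $\mu_f \circ T^{-1}$: for any $\vp \in L^{\infty}$,
$$
\int \vp\, \widehat{T_{\mu}} f\, d\mu \;=\; \int \vp\, d(\mu_f \circ T^{-1}) \;=\; \int (\vp \circ T)\, d\mu_f \;=\; \int (\vp \circ T)\, f\, d\mu.
$$
Uniqueness then follows from the standard fact that two $L^1$ functions agreeing against every element of $L^{\infty}$ on a $\sigma$-finite space must coincide $\mu$-almost everywhere.

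For part (2), linearity of $\widehat{T_{\mu}}$ is inherited from the linearity of $\mu_f$ in $f$ inside the Radon--Nikodym derivative. To compute the norm I would first handle $f \geq 0$: then $\mu_f \circ T^{-1}$ is a nonnegative measure, hence $\widehat{T_{\mu}} f \geq 0$, and applying part (1) with $\vp \equiv 1$ (through exhaustion by sets of finite $\mu$-measure and monotone convergence, if $\mu$ is infinite) yields $\|\widehat{T_{\mu}} f\|_1 = \int \widehat{T_{\mu}} f\, d\mu = \int f\, d\mu = \|f\|_1$. Splitting $f = f^+ - f^-$ and using the triangle inequality gives $\|\widehat{T_{\mu}} f\|_1 \leq \|f\|_1$ in general, and equality is attained on $f = 1_E$ for any $E$ of positive finite measure, so $\|\widehat{T_{\mu}}\| = 1$. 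Part (3) is then essentially a reformulation: viewing $\mu$ as the functional $f \mapsto \int f\, d\mu$ on $L^1$ and $\widehat{T_{\mu}}^*$ as the Banach-dual operator on $(L^1)^*$, applying part (1) with $\vp \equiv 1$ gives
$$
(\widehat{T_{\mu}}^*\mu)(f) \;=\; \mu(\widehat{T_{\mu}} f) \;=\; \int \widehat{T_{\mu}} f\, d\mu \;=\; \int f\, d\mu \;=\; \mu(f)
$$
for every $f \in L^1$, so $\widehat{T_{\mu}}^*\mu = \mu$ as functionals.

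The only real obstacle is the minor technicality of using $\vp \equiv 1$ as a test function when $\mu$ is not finite; this is routinely resolved by first testing against indicators of sets of finite measure and then passing to the limit via monotone convergence, or alternatively by proving the positivity-preserving property of $\widehat{T_{\mu}}$ directly from its Radon--Nikodym definition and reading off the norm identity for nonnegative $f$. Beyond this bookkeeping, each of the three items reduces to a single line of calculation.
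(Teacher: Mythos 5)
The paper states this as a Fact without giving a proof (it is a standard result, implicitly deferred to \cite{sar}), so there is no in-text argument to compare against. Your derivation is the standard one and is correct: part (1) unwinds the Radon--Nikodym definition of $\widehat{T_{\mu}}$ together with the change-of-variables formula for pushforward measures, and uniqueness follows from $L^1$--$L^\infty$ duality on a $\sigma$-finite space; part (2) uses positivity to get an isometry on nonnegative $f$ and then decomposes $f = f^+ - f^-$; part (3) applies part (1) with $\vp \equiv 1$.

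One small remark: the caveat you raise at the end about needing to test against indicators of finite-measure sets and passing to the limit when $\mu$ is infinite is not actually an obstacle. The constant function $1$ always lies in $L^{\infty}(\mu)$ regardless of whether $\mu(X)$ is finite, so it is a legitimate test function in part (1), and for $f \in L^1$ with $f \geq 0$ the measure $\mu_f$ is automatically finite, so $(\mu_f \circ T^{-1})(X) = \mu_f(X) = \int f\, d\mu$ is directly finite without any exhaustion. The $\sigma$-finiteness of $\mu$ is used only where it is genuinely needed, namely to invoke the Radon--Nikodym theorem in the definition of $\widehat{T_{\mu}} f$ and for the uniqueness clause in part (1).
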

\begin{definition}[Ruelle Operator] Suppose $X$ is a TMS, $\phi : X \to \mathbb{R}$. The {\em Ruelle operator} of $\phi$ is $\Lp f = \sum_{Ty=x} e^{\phi(y)} f(y)$.
\end{definition}
The definition is not proper since we did not state what is the domain and range. In our case the sum might even be infinite, since we may have infinitely many preimages for every point (recall we wish to consider infinite state TMS). However, we will restrict ourselves to functions the satisfy Walters condition and for such functions this operator turns out to be well defined and well behaved. Note that in the particular case where $\phi$ is the log jacobian of some measue $\mu$, then $\Lp = \widehat{T_{\mu}}$ and all the good properties of the transfer operator hold also for the Ruelle operator. 
\begin{definition}[Conservative measures] A non-singular map $T$ on a $\sigma$-finite measure space $(\Omega, \cal{B},\nu)$ is called {\em conservative} if every set $W \in \cal{B}$ s.t. $\{T^{-n}W\}_{n\geq 0}$ are pairwise disjoint $\mod \nu$ satisfies $W = \varnothing \mod \nu$.
\end{definition}
Recall that a TMS is topologically mixing if for every two states $a,b$ there exists $N_{ab}$ s.t. $\forall n \geq N_{ab}$ there exists some $\xi_i, 1\leq i \leq n-1$ s.t.  $A_{a\xi_1}A_{\xi_1\xi_2}\dots A_{\xi_{{(n-1)}b}} = 1$.
\begin{definition}[Preliminary combinatoric expressions] Let $\phi: X \to \bb{R}$.
\begin{itemize}
\item $\vp_a(x):= 1_{[a]} \min\{n\geq 1|T^n(x) \in [a]\}$ - the first return time.
\item $Z_n^*(\phi,a):= \sum_{T^nx=x} e^{\phi_n(x)}1_{[\vp_a=n]}(x)$.
\item $Z_n(\phi,a):= \sum_{T^nx=x} e^{\phi_n(x)}1_{[a]}(x)$.
\end{itemize}
\end{definition}
\begin{proposition}[Gurevich pressure]
\ass For every state $a\in S$, $\lim\limits_{n\to \infty} \frac{1}{n} \log Z_n(\phi,a)$ exists and is independent of $a$. We call this limit the {\em Gurevich pressure} of $\phi$ and denote it $\pg$.
\end{proposition}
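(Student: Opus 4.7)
The plan is to show that $\log Z_n(\phi, a)$ is approximately super-additive in $n$, invoke Fekete's lemma to obtain existence of the limit, and then use topological mixing to transfer the conclusion between states.

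For the super-additivity step, I would take periodic points $x$ with $T^n x = x, x_0 = a$ and $y$ with $T^m y = y, y_0 = a$ and concatenate their orbits to form a periodic point $z$ of period $n+m$ with $z_0 = a$; admissibility of the transitions at positions $n$ and $n+m$ follows from $x_n = y_0 = a$ and $y_m = x_0 = a$. The map $(x,y) \mapsto z$ is injective, so
$$
Z_{n+m}(\phi, a) \;\ge\; \sum_{x, y} e^{\phi_{n+m}(z)}.
$$
Using the Walters property---choosing $k_0$ with $\sup_n \vari_{n + k_0} \phi_n \le M < \infty$---I would establish a bound $\phi_{n+m}(z) \ge \phi_n(x) + \phi_m(y) - K$ with $K = K(\phi)$ universal. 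This yields $Z_{n+m}(\phi, a) \ge e^{-K} Z_n(\phi, a)\, Z_m(\phi, a)$, so $\log Z_n(\phi,a) - K$ is eventually super-additive and Fekete's lemma gives existence of $\lim_n \tfrac{1}{n} \log Z_n(\phi, a) \in (-\infty, +\infty]$.

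For independence from the state: given $a, b$, topological mixing supplies admissible paths $a \xi_1 \cdots \xi_{p-1} b$ and $b \eta_1 \cdots \eta_{q-1} a$ of some lengths $p, q$. For any periodic $x$ of period $n$ at $a$, I would glue $x$ with these two paths to obtain a periodic point of period $n + p + q$ at $b$ whose Birkhoff sum of $\phi$ differs from $\phi_n(x)$ by a constant depending only on $\phi, a, b, p, q$ (once again by Walters applied to the shared initial window). This gives
$$
Z_{n + p + q}(\phi, b) \;\ge\; C_{ab}\, Z_n(\phi, a),
$$
and by symmetry the reverse inequality with $a, b$ swapped. Dividing by $n$ and letting $n \to \infty$ equates the two limits.

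The hard part will be establishing the uniform estimate $\phi_{n+m}(z) \ge \phi_n(x) + \phi_m(y) - K$ using only the Walters hypothesis (not summable variations). The Walters condition controls $\vari_{n+k_0} \phi_n$ only at the fixed shift $k_0$, while the naive concatenation above has $z$ and $x$ sharing merely $n+1$ initial coordinates. The remedy is to arrange the concatenation with a short ``buffer'' between the $x$-block and the $y$-block (whose existence is guaranteed by topological mixing) so that $z$ shares at least $n + k_0$ coordinates with a canonical reference point in the $(n+k_0)$-cylinder of $x$, and symmetrically $T^n z$ shares $m + k_0$ coordinates with a reference point for $y$. Since $k_0$ is independent of $n, m$, the bounded penalty from the buffer and the Walters comparison both absorb into $K$, and the exponential growth rate is unaffected.
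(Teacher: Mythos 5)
The paper does not prove this proposition; it is stated as a known result (imported from Sarig's work on thermodynamic formalism for countable Markov shifts), so there is no paper proof to compare against. Your outline --- approximate super-multiplicativity of $Z_n(\phi,a)$, Fekete, then gluing via topological mixing for state-independence --- is indeed the standard route.

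The difficulty you flag in your last paragraph is not actually a gap, and the buffer machinery you sketch is unnecessary (and, as described, would not quite work: the buffer that forces $z$ to agree with $x$ on $n+k_0$ coordinates must equal $(a,x_1,\dots,x_{k_0-1})$, after which you need a path back to $a$ from $x_{k_0-1}$, whose length is not uniform over the infinite alphabet, so the period of $z$ and the injectivity bookkeeping drift out of control). The point you overlook is that the Walters condition of Definition~\ref{walters1definition} already forces $\sup_{n\ge 1}\vari_{n+1}\phi_n<\infty$. Indeed, pick $k_0$ with $M:=\sup_n \vari_{n+k_0}\phi_n<\infty$. For $n\ge k_0$ and $x_0^n=y_0^n$, split
$$
\phi_n=\phi_{n-k_0+1}+\phi_{k_0-1}\circ T^{\,n-k_0+1}.
$$
The first piece is controlled by $\vari_{(n-k_0+1)+k_0}\phi_{n-k_0+1}\le M$ (since $x,y$ agree on $n+1$ coordinates), and the second by $\vari_{k_0}\phi_{k_0-1}<\infty$, which is finite by the pointwise-finiteness clause of the definition. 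Hence $\vari_{n+1}\phi_n\le M+\vari_{k_0}\phi_{k_0-1}$ for all $n\ge k_0$, and the finitely many small $n$ are handled by the same clause. With this uniform bound the naive concatenation (no buffer, $z$ and $x$ sharing $n+1$ coordinates, $T^nz$ and $y$ sharing $m+1$) gives $\phi_{n+m}(z)\ge \phi_n(x)+\phi_m(y)-2M''$ directly, so your super-multiplicativity and Fekete step go through as first written. The state-independence gluing is handled the same way: the two outer Birkhoff blocks $\phi_q(z)$ and $\phi_p(T^{q+n}z)$ each vary by at most $\vari_{q+1}\phi_q$ and $\vari_{p+1}\phi_p$ over the relevant cylinder, both finite, and the middle block is controlled by the $\sup_n\vari_{n+1}\phi_n$ bound. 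So your proof, minus the last paragraph's detour, is correct and self-contained.
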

\begin{definition}[Modes of recurrence] Suppose $X$ is a topologically mixing TMS, $\phi: X\to \bb{R}$ is Walters and $\pg < \infty$. Let $\lambda := \exp (\pg)$. Fix some state $a$. Then
\begin{itemize}
\item $\phi$ is called {\em recurrent}, if $\sum \lambda^{-n}Z_n(\phi,a) =\infty$ and {\em transient} otherwise;
\item $\phi$ is called {\em positive recurrent}, if it is recurrent and $\sum n\lambda^{-n}Z^*_n(\phi,a) <\infty$;
\item $\phi$ is called {\em null recurrent}, if it is recurrent and $\sum n\lambda^{-n}Z^*_n(\phi,a)=\infty$.
\end{itemize}
\end{definition}
\begin{theorem}[Generalized Ruelle's Perron-Frobenius, \cite{GRPF}]\label{GRPFtheorem} \ass Assume $\sup \pg < \infty$.
\begin{enumerate}
\item $\phi$ is positive recurrent if there exists $\lambda >0$, $h$ positive continuous and $\nu$ conservative finite on cylinders s.t. $\Lp h = \lambda h, \Lp^*\nu = \lambda \nu$ and $\int hd\nu <\infty$. In this case, $\lambda = \exp(\pg)$. Also, for every cylinder $[\underline a]$, $\lambda^{-n}\Lp^n1_{[\underline a]} \rightarrow\frac{h\nu [\underline a]}{\int h d\nu}$ uniformly on compacts.

\item $\phi$ is null recurrent if there exists $\lambda >0$, $h$ positive continuous and $\nu$ conservative finite on cylinders s.t. $\Lp h = \lambda h, \Lp^*\nu = \lambda \nu$ and $\int hd\nu = \infty$. In this case, $\lambda = \exp(\pg)$. Also, for every cylinder $[\underline a]$, $\lambda^{-n}\Lp^n1_{[\underline a]} \rightarrow 0$ uniformly on compacts.

\item $\phi$ is transient if there is no $\nu$ conservative finite on cylinders $\Lp^* \nu = \lambda \nu$ for some $\lambda >0$.
\end{enumerate}
\end{theorem}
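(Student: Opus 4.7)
The plan is to build the spectral data $(\lambda, h, \nu)$ by inducing on a single base cylinder and carrying out a renewal-type analysis, then to read off the three dichotomies from the Erd\H{o}s--Feller--Pollard discrete renewal theorem applied to the return-time generating functions. The eigenvalue is forced at the outset: using $\Lp^n 1_{[a]}(x) = \sum_{T^n y = x} e^{\phi_n(y)} 1_{[a]}(y)$ together with the Walters property (which gives uniform comparability of $\phi_n(y)$ and $\phi_n(y')$ whenever $y,y'$ share enough central coordinates), one obtains $\Lp^n 1_{[a]}(x) \asymp Z_n(\phi,a)$ uniformly on each cylinder, so the exponential growth rate of $\Lp^n$ equals $\exp(\pg)$ and any positive continuous eigenfunction is forced to sit at that eigenvalue.

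Next comes the renewal analysis, which is the core of the argument. Fix $a \in S$ and set $f_n := \lambda^{-n} Z_n^*(\phi,a)$, $u_n := \lambda^{-n} Z_n(\phi,a)$. A bijection between periodic points of period $n$ in $[a]$ and concatenations of first-return loops yields the convolution identity $u_n = \sum_{k=1}^{n} f_k u_{n-k}$ with $u_0 = 1$, i.e.~$U(z) = (1 - F(z))^{-1}$ in generating-function form. Recurrence becomes $F(1) = 1$, positive recurrence adds $\sum n f_n < \infty$, and transience is $F(1) < 1$. The Erd\H{o}s--Feller--Pollard renewal theorem then gives $u_n \to (\sum n f_n)^{-1}$ in the positive recurrent case, $u_n \to 0$ in the null recurrent case, and $\sum u_n < \infty$ in the transient case; the analogous asymptotics on an arbitrary cylinder $[\underline a]$ follow by grouping orbits according to their first hit of $[a]$.

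To produce $\nu$ I would extract a weak-$*$ accumulation point of the normalized dual iterates $\lambda^{-n}\Lp^{*n}\delta_{x_0}$, localized near $[a]$, and propagate to the rest of $X$ using $\Lp^* \nu = \lambda \nu$. Conservativity and finiteness on cylinders in the recurrent case follow from $F(1) = 1$ via Kac's lemma for the induced map $T^{\vp_a}$ on $[a]$. The eigenfunction is then $h(x) := \lim_n \lambda^{-n} \Lp^n 1_{[a]}(x)/\nu[a]$: the limit exists on compacts by the renewal asymptotics combined with Walters-type equicontinuity of $\lambda^{-n}\Lp^n 1_{[a]}$ on each cylinder, and $\Lp h = \lambda h$ is inherited from $\Lp^{n+1} 1_{[a]} = \Lp(\Lp^n 1_{[a]})$ once one justifies exchanging $\Lp$ with the limit. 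For transience, any hypothetical $\nu$ with $\Lp^* \nu = \lambda \nu$ would induce a conservative invariant measure for $T^{\vp_a}$ on $[a]$, contradicting $F(1) < 1$.

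The main obstacle, and the place where the countable-alphabet proof departs from the classical compact one, is the passage to weak-$*$ limits without compactness of $X$. I would handle this by using the Walters property to establish uniform equicontinuity of $\lambda^{-n} \Lp^n 1_{[\underline a]}$ on each fixed cylinder, then invoking Arzel\`a--Ascoli cylinder-by-cylinder combined with a diagonal argument over an exhaustion by finite unions of cylinders. A secondary subtlety is that $\Lp$ is an infinite sum, so continuity of the eigendata under $\Lp$ requires enough decay of $e^{\phi}$ along preimage branches --- this is furnished by the finiteness of $\pg$, which is the standing hypothesis.
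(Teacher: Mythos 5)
The paper does not prove this theorem at all: it is invoked as a black box, attributed to \cite{GRPF} (and, for the Walters-regularity version, to \cite{sar}), and only its consequences are developed. So there is no internal proof to compare against, and you have essentially reconstructed a sketch of the argument from the cited sources rather than of anything in this paper.

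As a sketch of the original argument your outline has the right shape --- fix a base state $a$, pass to return-loop data, run a discrete renewal (Erd\H{o}s--Feller--Pollard) analysis on $u_n = \lambda^{-n}Z_n(\phi,a)$ and $f_n = \lambda^{-n}Z^*_n(\phi,a)$, extract $\nu$ by a weak-$*$ limit along subsequences, and build $h$ as $\lim_n \lambda^{-n}\Lp^n 1_{[a]}$. But there is a genuine gap at the central step. The identity $u_n = \sum_{k=1}^n f_k\,u_{n-k}$ that you assert ``by a bijection between periodic orbits and concatenations of first-return loops'' is \emph{not} exact unless $\phi$ is locally constant. If $x$ is a periodic point of period $n$ in $[a]$ whose first return occurs at time $k$, then $\phi_n(x)=\phi_k(x)+\phi_{n-k}(T^k x)$, but $T^k x$ is a point on the \emph{same} period-$n$ orbit, not an independent period-$(n-k)$ periodic point; closing up the two sub-loops changes their tails and hence their Birkhoff sums. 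The Walters property only gives $\bigl|\phi_n(x)-\phi_k(x')-\phi_{n-k}(x'')\bigr|\le \vari_1\phi + \sup_m \vari_{m+1}\phi_m$, i.e.\ a multiplicative $e^{\pm C}$ error in the convolution. With an $e^{\pm C}$ error, the Erd\H{o}s--Feller--Pollard theorem does not apply directly. The actual proof handles this either by first discretizing $\phi$ to a locally constant potential (where the convolution is exact), proving the dichotomy there, and then passing to the limit, or by explicitly propagating the bounded distortion through a modified renewal scheme. Your sketch would need to flag this and choose one of these repairs; as written the renewal step does not go through.

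A smaller point: you write $\Lp^n 1_{[a]}(x)\asymp Z_n(\phi,a)$ ``uniformly on each cylinder.'' The comparability constant from the Walters property is uniform in $n$ but depends on the cylinder through $\vari_1\phi$-type bounds; you also need the base state $a$ to communicate with every state within bounded time windows, which is exactly where topological mixing (as opposed to mere transitivity) enters. This is fine, but worth making explicit since the transitive case in the rest of the paper is handled by passing to the spectral decomposition first.
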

If $X$ is compact, then $\phi$ is positive recurrent \cite{sar}, so the last two cases cannot occur. Here we are considering non-compact shift spaces and the last two cases may, in fact, occur. We call the probability  measure $dm:= h d\nu$ (apply normalization if required) from part (1) a \textit{RPF measure}. Our  focus will be on the recurrent case. We turn to some consequences of the GRPF theorem.
\begin{proposition}[Uniqueness of RPF measures]\label{UniquenessRPF} \ass Then $\phi$ has at most one RPF measure.
\end{proposition}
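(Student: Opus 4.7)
The plan is to exploit the uniform-on-compacts convergence
\[
\lambda^{-n}\Lp^n 1_{[\ua]}(x) \longrightarrow \frac{h(x)\nu[\ua]}{\int h\, d\nu}
\]
from part (1) of theorem \ref{GRPFtheorem}, which pins down the product $h(x)\nu[\ua]$ (modulo the normalizer $\int h\, d\nu$) purely in terms of $\phi$ and $T$. Suppose $dm_i = h_i\, d\nu_i / c_i$ for $i=1,2$, with $c_i := \int h_i\, d\nu_i$, are two RPF measures coming from eigendata $(\lambda_i, h_i, \nu_i)$. The existence of either triple places $\phi$ in the positive recurrent case, so theorem \ref{GRPFtheorem}(1) forces $\lambda_1 = \lambda_2 = \exp \pg$; in particular both triples are governed by the same operator and eigenvalue, and by the same pointwise limit above.

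I would then fix a state $a\in S$ with $\nu_1[a] > 0$, which exists since $\nu_1$ is a nonzero $\sigma$-finite measure on $X = \bigcup_a [a]$. Applying the convergence above to $(\lambda,h_1,\nu_1)$ and to $(\lambda,h_2,\nu_2)$ and equating the pointwise limits gives
\[
\frac{h_1(x)\nu_1[a]}{c_1} \;=\; \frac{h_2(x)\nu_2[a]}{c_2}, \qquad \forall x\in X.
\]
Since $h_1,h_2$ are strictly positive and $\nu_1[a]>0$, this automatically forces $\nu_2[a]>0$, and dividing yields $h_1 \equiv \kappa h_2$ for some constant $\kappa>0$. Substituting $h_1 = \kappa h_2$ back into the analogous identity for an arbitrary cylinder $[\ua]$ and cancelling the factor $h_2(x)>0$ gives $\nu_1[\ua] = (c_1 / \kappa c_2)\, \nu_2[\ua]$ for every cylinder.

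Since cylinders form a $\pi$-system generating the Borel $\sigma$-algebra and both $\nu_i$ are $\sigma$-finite, this proportionality extends to all Borel sets, so $\nu_1 = (c_1/\kappa c_2)\,\nu_2$ as measures. Feeding this back into $dm_i = h_i\, d\nu_i / c_i$ makes the constants $\kappa,c_1,c_2$ cancel and yields $dm_1 = dm_2$. The main subtlety to check is that the constant $\kappa$ extracted from the cylinder $[a]$ is consistent with every other cylinder $[\ua]$; this is handled automatically by the \emph{separation-of-variables} shape $h(x)\cdot\nu[\ua]$ of the GRPF limit formula, together with the positivity of $h_i$ guaranteed by theorem \ref{GRPFtheorem}(1). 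No compactness of $X$ enters the argument, which is exactly why we can run it in the CMS setting.
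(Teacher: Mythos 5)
Your proof is correct and follows essentially the same route as the paper: both arguments hinge on the uniform-on-compacts limit $\lambda^{-n}\Lp^n 1_{[\ua]} \to h\,\nu[\ua]/\int h\,d\nu$ from theorem \ref{GRPFtheorem}(1), equate the limits for two eigentriples to deduce $h_1 = \kappa h_2$ and then $\nu_1 \propto \nu_2$ on cylinders, and extend to the Borel $\sigma$-algebra. Your version is marginally more explicit about selecting a cylinder $[a]$ with $\nu_1[a]>0$ and uses a $\pi$-system argument for the extension where the paper invokes Carathéodory, but these are cosmetic differences in the same proof.
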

\begin{proof}
Let $\mu, \nu$ be two RPF measures with corresponding $h,f$. Their eigenvalue is equal: $\lambda = e^{\pg}$. Now, for any $[\underline a]$ we have $\frac{h\mu [\underline a]}{\int h d\mu} = \frac{f\nu[\underline a]}{\int fd\nu} \Rightarrow h = f\cdot const$, so the eigenspace of $\lambda$ is 1-dimensional. Uniqueness of $h$ up to multiplication follows.

Let $f=ch$. We get $\frac{h\mu [\underline a]}{\int h d\mu} = \frac{ch\nu[\underline a]}{\int chd\nu}= \frac{h\nu[\underline a]}{\int hd\nu}$. $h>0$ so we may divide :$\frac{\mu [\underline a]}{\int h d\mu} = \frac{\nu[\underline a]}{\int hd\nu}$. Thus $\mu[\underline a] = \nu[\underline a] \cdot \frac{\int hd\mu}{\int hd\nu}$.

This can be extended to the algebra generated by cylinders. Caratheodory's extension theorem extends this to the Borel $\sigma$-algebra and completes it. Thus $\mu = \nu$ up to a multiplicative factor. We require $\int h d\nu =1$, so the RPF measure is indeed uniquely determined.
\end{proof}
\begin{definition}[Equilibrium measure] Let $X$ be a TMS, $\phi: X\to \bb{R}$ measurable. A shift invariant probability measure $m$ is called an equilibrium
measure for $\phi$ if
$$
h_m(T)+ \int \phi dm = \sup\{h_{\mu}(T)+ \int \phi d\mu\}
$$
where the supremum ranges over all invariant Borel probability measures $\mu$ for which $h_{\mu}(T)+ \int \phi d\mu$ is well defined (i.e. does not equal $\infty - \infty$).
\end{definition}
The following variational principle was proved in a compact setting by Ruelle \cite{ruellevariational} (see also \cite{wal2}). Sarig \cite{sarig1999thermodynamic} showed this for countable (i.e. non compact) Markov shifts. 
\begin{theorem}[Variational principle]\label{variational principle} 
\ass  If $\sup \phi < \infty$ then
$$
\pg = \sup \{h_{\mu}(T) +\int \phi d\mu\},
$$
where the supremum ranges over shift invariant Borel probability measures for which $h_{\mu}(T) +\int \phi d\mu$ is well defined.
\end{theorem}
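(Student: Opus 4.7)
I would establish the two inequalities separately.

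For the easier direction $P_G(\phi) \leq \sup\{h_\mu(T) + \int \phi\, d\mu\}$, I would approximate $X$ from within by compact topologically mixing sub-shifts $X_N$, obtained by restricting to a chain of finite sub-alphabets $S_N \uparrow S$ chosen so that each $X_N$ remains topologically mixing. On each compact $X_N$, $\phi|_{X_N}$ is Walters and hence continuous, so the classical compact-case variational principle of Ruelle and Walters provides an invariant probability measure $\mu_N$ with $h_{\mu_N}(T) + \int \phi\, d\mu_N = P_{\mathrm{top}}(\phi|_{X_N})$. Extending $\mu_N$ by zero to $X$ gives an admissible measure for the supremum on the right. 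It then suffices to show $P_G(\phi) = \sup_N P_{\mathrm{top}}(\phi|_{X_N})$, which is essentially immediate from $P_G(\phi) = \lim \frac{1}{n}\log Z_n(\phi, a)$: every periodic orbit contributing to $Z_n(\phi, a)$ visits only finitely many symbols and so lies in some $X_N$, while the compact-case identity $P_{\mathrm{top}}(\phi|_{X_N}) = \lim \frac{1}{n}\log Z_n(\phi|_{X_N}, a)$ is standard.

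For the harder direction $\sup\{h_\mu(T) + \int \phi\, d\mu\} \leq P_G(\phi)$, I would reduce to ergodic $\mu$ using the ergodic decomposition, exploiting the assumption $\sup \phi < \infty$ (which forces $\int \phi\, d\mu \leq \sup \phi$ and removes one source of the $\infty - \infty$ ambiguity). Given an ergodic $\mu$, pick a state $a$ with $\mu[a] > 0$ and induce on $X_a := [a]$, obtaining a countable full shift $(X_a, T_a)$ with induced potential $\Phi_a = \sum_{i=0}^{\varphi_a - 1} \phi \circ T^i$ and induced probability $\mu_a = \mu(\,\cdot \cap X_a\,)/\mu[a]$. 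Abramov's formula and Kac's formula together recast the desired inequality as
\[
h_{\mu_a}(T_a) + \int \bigl(\Phi_a - \varphi_a P_G(\phi)\bigr)\, d\mu_a \leq 0.
\]
This is the variational principle for the countable full shift $(X_a, T_a)$ applied to $\Phi_a - \varphi_a P_G(\phi)$, whose topological pressure is nonpositive because the corresponding partition function is $\sum_n Z_n^*(\phi, a)e^{-n P_G(\phi)} \leq 1$, a direct consequence of $e^{P_G(\phi)}$ being the exponential growth rate of $Z_n(\phi, a)$ and the trivial bound $Z_n^*(\phi, a) \leq Z_n(\phi, a)$.

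The main obstacle is this second inequality, for two reasons. First, the ergodic decomposition step is delicate because $h_\mu(T)$ and $\int \phi^-\, d\mu$ may each be infinite even when the sum $h_\mu(T) + \int \phi\, d\mu$ is well defined; making the affinity arguments rigorous requires careful bookkeeping enabled by $\sup \phi < \infty$. Second, the variational principle on the induced countable full shift is itself what is being proved here (in a slightly easier setting), so the loop must be closed by establishing the full-shift case independently, either via the entropy formula on the generating partition by the zeroth coordinate together with a Jensen-type estimate, or by the finite-sub-alphabet approximation used in the first direction applied to $\Phi_a$ (which inherits enough regularity from the Walters property of $\phi$).
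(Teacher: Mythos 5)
Your first direction (approximation by compact mixing sub-shifts to get $P_G(\phi)\leq\sup$) is essentially the same as the paper's: the paper's Lemma on pressure over sub-systems gives $P_G(\phi)=\sup_Y P_G(\phi|_Y)$ over compact mixing sub-systems, and the paper then builds an equilibrium measure on $Y$ via the (G)RPF theorem and Rokhlin's formula rather than citing the compact variational principle directly — but the idea is identical.

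Your second direction (inducing on $[a]$, Abramov/Kac, then the full-shift variational principle for $\Phi_a-\varphi_a P_G(\phi)$) departs genuinely from the paper, which instead works directly on $X$ with the finite partitions $\alpha_m=\{[1],\dots,[m],[>m]\}$, applies Jensen's inequality to $\frac{1}{n}H_\mu(\beta_0^n)+\int\phi\,d\mu$, and uses $\sup\phi<\infty$ only to control the $[>m]$ atom. Your route has gaps that are not merely bookkeeping. First, the circularity you flag is real and not easily repaired: the variational principle for a countable full shift with an unbounded-entropy partition is not a substantially easier special case — it needs the same finite-partition/Jensen machinery the paper uses, so the reduction to $(X_a,T_a)$ buys nothing and you would end up reproducing the paper's direct argument on the induced system. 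Second, the claim that $\sum_n Z_n^*(\phi,a)e^{-nP_G(\phi)}\leq 1$ follows "directly" from $Z_n^*\leq Z_n$ together with $P_G$ being the growth rate is false as stated: $\sum_n Z_n(\phi,a)e^{-nP_G(\phi)}$ is typically infinite (that is exactly the recurrent case), so the comparison gives nothing; the inequality $\sum_n Z_n^*(\phi,a)e^{-nP_G(\phi)}\leq 1$ is true but needs the renewal/radius-of-convergence argument and deserves to be proved or cited, not waved at. Third, $\sup\phi<\infty$ does not give $\sup\Phi_a<\infty$ nor $\sup(\Phi_a-\varphi_a P_G(\phi))<\infty$ in general (the sign of $\sup\phi-P_G(\phi)$ is not controlled), so the hypothesis you would need to invoke the variational principle on the induced shift may fail, and this is precisely the hypothesis you use to "remove one source of $\infty-\infty$" in the ergodic-decomposition step — a step the paper avoids altogether by arguing for arbitrary invariant $\mu$ and sending $m\to\infty$ at the end.
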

The proof here follows \cite{sar} almost verbatim. We give a proof here since there it is not stated for functions that are Walters, but summable variations. Before we prove this, we state few useful facts. We start with a lemma from \cite{sar}. The proof there is stated for potentials with summable variations but the same proof works verbatim if the potential is Walters. For that lemma we need the following definition.
\begin{definition}[Sub-system]
Let $X$ be a TMS over the set of states $S$ and with transition matrix $\bb{A}= (t_{ij})_{S\times S}$. A sub-system of $X$ is a TMS with a set of states $S' \subseteq S$ and transition matrix $\bb{A} '= (t'_{ij})_{S'\times S'}$ s.t. $t'_{ij}=1 \Leftrightarrow t_{ij} =1$.
\end{definition}
\begin{lemma}[Pressure over sub - systems]\label{pressure over sub systems}
\ass Then $\pg = \sup_Y\{\pg\}$, where the supremum ranges over $Y$'s that are topologically mixing compact sub-systems of $X$.
\end{lemma}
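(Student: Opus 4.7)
The plan is to handle the two inequalities separately. The bound $\sup_Y \pg|_Y \leq \pg$ is essentially formal: for any topologically mixing compact sub-system $Y \subseteq X$ whose alphabet contains a common state $a$, every $n$-periodic point of $Y$ lying in $[a]$ is also an $n$-periodic point of $X$ lying in $[a]$, so $Z_n(\phi|_Y, a) \leq Z_n(\phi, a)$; dividing by $n$, taking logarithms and passing to the limit yields the desired inequality.

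For the reverse inequality, I would approximate from below by selecting finitely many orbits and then engineering a topologically mixing compact sub-system that carries them. Fix a state $a$ and $\epsilon > 0$ and choose $n$ with $\tfrac{1}{n}\log Z_n(\phi, a) > \pg - \epsilon$. Since $Z_n(\phi, a)$ is a convergent countable sum of positive terms (one per $n$-periodic point in $[a]$), I would truncate it to a finite sub-collection $E$ satisfying $\sum_{x \in E} e^{\phi_n(x)} \geq \tfrac{1}{2} Z_n(\phi, a)$ and let $F \subseteq S$ be the finite set of symbols used in $E$. To upgrade $F$ to an alphabet of a topologically mixing sub-system, I would enlarge it to $\tilde F \supseteq F \cup \{a\}$ by adjoining the symbols traversed by two loops at $a$ of coprime lengths, which exist since $X$ is topologically mixing. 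Every symbol of $\tilde F$ then lies on a loop through $a$, and loops at $a$ of all sufficiently large lengths in $\tilde F$ are obtainable by combining the two coprime loops, so the sub-system $Y$ over $\tilde F$ is topologically mixing.

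It then remains to upgrade the single-$n$ bound $Z_n(\phi|_Y, a) \geq \tfrac{1}{2}Z_n(\phi, a)$ to a bound on $\pg|_Y = \lim_m \tfrac{1}{m}\log Z_m(\phi|_Y, a)$, which I would do by a concatenation argument. Gluing $k$ orbits from $E$ at the state $a$ produces an admissible periodic point of length $kn$ in $Y$; the Walters condition (combined with $\vari_1 \phi < \infty$, which we may assume after recoding by $2$-cylinders as noted in section \ref{waltersection}) bounds the discrepancy $|\phi_{kn}(\text{concatenation}) - \sum_j \phi_n(\tau_{i_j})|$ by $Ck$ for some constant $C = C(\phi)$. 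This yields $Z_{kn}(\phi|_Y, a) \geq e^{-Ck}\bigl(\sum_{x \in E} e^{\phi_n(x)}\bigr)^k$, hence $\pg|_Y \geq \tfrac{1}{n}\log Z_n(\phi, a) - (\log 2 + C)/n \geq \pg - \epsilon - O(1/n)$. Taking $n$ large and then $\epsilon \to 0$ closes the argument. The main obstacle is the concatenation step: one must verify that the Walters property, rather than the stronger summable-variation assumption used in \cite{sar}, still delivers the linear-in-$k$ distortion bound needed above, which is exactly where the ``verbatim'' nature of the argument has to be checked carefully.
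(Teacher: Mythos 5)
The paper does not actually reproduce a proof of this lemma: it cites Sarig's notes \cite{sar} and asserts that the summable-variations argument there ``works verbatim'' under the Walters hypothesis. Your reconstruction is the argument the paper is pointing to: truncate $Z_n(\phi,a)$ to a finite partial sum, enlarge the finite alphabet of the surviving orbits by adjoining the symbols of two $a$-loops of coprime lengths to guarantee a topologically mixing compact sub-system $Y$, and bound $Z_{kn}(\phi|_Y,a)$ from below by concatenating $k$ of the retained $n$-periodic orbits at $a$. The upper bound $\sup_Y P_G(\phi|_Y) \leq P_G(\phi)$ is exactly as trivial as you say. So the approach matches; the only content is the point you flag at the end, and it does resolve. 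The constant you need is $C := \sup_{n\geq 1}\vari_{n+1}\phi_n$, and this is finite under Walters alone: by the defining limit there is $k_0$ with $\sup_{n\geq 1}\vari_{n+k_0}\phi_n < 1$; for $n\geq k_0$ and $x,y$ with $x_0^n=y_0^n$, split $\phi_n = \phi_{n-k_0+1} + \phi_{k_0-1}\circ T^{n-k_0+1}$ and observe that $T^{n-k_0+1}x$ and $T^{n-k_0+1}y$ agree on coordinates $0,\dots,k_0-1$, so
\[
|\phi_n(x)-\phi_n(y)| \leq \vari_{(n-k_0+1)+k_0}\phi_{n-k_0+1} + \vari_{k_0}\phi_{k_0-1} < 1 + \vari_{k_0}\phi_{k_0-1},
\]
a bound independent of $n$; the finitely many $n<k_0$ are handled by the Walters requirement that each individual $\vari_{n+k}\phi_n$ be finite. (In particular you do not even need the extra hypothesis $\vari_1\phi<\infty$ or a recoding for this lemma.) With $C<\infty$, your distortion bound $|\phi_{kn}(z) - \sum_j\phi_n(\tau_j)| \leq kC$ follows because each shifted block $T^{jn}z$ agrees with $\tau_{j+1}$ on coordinates $0,\dots,n$ (both begin at $a$), and the rest of your estimate is routine. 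Two small points worth tidying: (i) your truncation step presumes $Z_n(\phi,a)<\infty$; if $P_G(\phi)=\infty$ one simply truncates to a partial sum exceeding any prescribed $e^{nM}$ and the same computation shows $\sup_Y P_G(\phi|_Y) = \infty$; (ii) you should say explicitly that distinct ordered $k$-tuples from $E$ yield distinct $kn$-periodic points, so the product lower bound on $Z_{kn}(\phi|_Y,a)$ is legitimate — this is immediate but deserves a sentence.
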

In this context it might be useful for some readers to recall that a TMS is compact iff it has a finite number of states.
We will also need the following.
\begin{definition}[Sweep-out set]
$E$ is a sweep out set for a probability preserving transformation $(\Omega,\mathcal{B},\mu,T)$ if $T^n(x) \in E$ for some $n\geq 1$ for $\mu$ a.e. $x\in\Omega$.
\end{definition}
Let $E$ be sweep out. Then $\mu (E)> 0$. If $\mu$ is ergodic, any set of positive measure is a sweep out set. The following are classic results.
\begin{fact}[Abramov's Formula]
Let $E$ be a sweep-out set. If $\mu_E := \mu (\ \cdot\ |E)$ then
$$
\ent{\mu} = \mu (E) h_{\mu_{E}}(T_{E})
$$
\end{fact}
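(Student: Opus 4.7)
The plan is to realize $(X,\mathcal{B},\mu,T)$ as a Kakutani skyscraper over the induced system $(E,\mathcal{B}_E,\mu_E,T_E)$ with ceiling function $\vp_E$, the first return time to $E$. First I would verify that $\mu_E$ is $T_E$-invariant, which follows from the $T$-invariance of $\mu$ together with the sweep-out hypothesis, which ensures $T_E$ is defined $\mu_E$-a.e. Kac's lemma then supplies the identity $\int_E \vp_E\, d\mu_E = 1/\mu(E)$; this time-rescaling is exactly what produces the factor $\mu(E)$ on the right-hand side of Abramov's formula.

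For the entropy comparison I would pick a finite-entropy partition $\beta$ of $E$ which is generating for $(T_E,\mu_E)$ and lift it to a partition $\alpha$ of $X$ by copying the atoms of $\beta$ on each floor $\{T^k x : x\in E,\ 0\le k <\vp_E(x)\}$ of the tower and further refining by the floor index. A $T$-orbit starting at $x\in E$ takes exactly $\sum_{i=0}^{n-1}\vp_E(T_E^i x)$ steps to perform its $n$-th return to $E$, and this sum is asymptotic to $n/\mu(E)$ by Birkhoff's ergodic theorem. Combined with the Shannon-McMillan-Breiman theorem applied to both $(T,\alpha,\mu)$ and $(T_E,\beta,\mu_E)$, one obtains $h_\mu(T,\alpha)=\mu(E)\,h_{\mu_E}(T_E,\beta)$, and a routine check that $\alpha$ is generating for $T$ whenever $\beta$ is generating for $T_E$ lets one pass to the supremum over $\beta$ and conclude $\ent{\mu}=\mu(E)h_{\mu_E}(T_E)$.

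The main obstacle is that $\vp_E$ is in general unbounded, so the tower has infinite height on a set of positive measure and the naive combinatorics of cylinder sets inside a single column breaks down; this is why the argument is routed through pointwise-a.e.\ results (the ergodic theorem and SMB) rather than uniform counts. If $\mu$ is not ergodic one first establishes the identity componentwise in the ergodic decomposition and then integrates; in our intended application the equilibrium measures considered are ergodic, so this complication is benign. Since the paper treats Abramov's formula as a classical fact, I would simply cite a standard reference, e.g.\ Petersen's \emph{Ergodic Theory} or Aaronson's \emph{An Introduction to Infinite Ergodic Theory}, in lieu of reproducing these details.
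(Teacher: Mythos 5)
The paper does not prove Abramov's formula; it lists it as a classic fact alongside Kac's and Rokhlin's formulas and implicitly relies on standard references, which is exactly what you ultimately propose to do. Your accompanying sketch --- Kakutani tower over the induced system, Kac's lemma for the time-rescaling factor $\mu(E)$, and Shannon--McMillan--Breiman together with Birkhoff for the entropy comparison --- is the standard route and is sound in outline. One caveat worth flagging: the lift of $\beta$ refined by the floor index is a countable partition, and when $\vp_E$ is unbounded nothing in your sketch guarantees it has finite static entropy (integrability of $\vp_E$, which is all that Kac gives you, does not imply $\sum_n \mu(\mathrm{floor}\ n)\log\tfrac{1}{\mu(\mathrm{floor}\ n)}<\infty$), so SMB cannot be applied to that lifted partition directly. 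Published proofs circumvent this either by truncating the return time and passing to a limit, or by working with increasing $\sigma$-algebras rather than a single finite-entropy partition. You correctly identify the unboundedness of $\vp_E$ as the main obstacle, but the resolution is a bit more than the ``routine check'' you defer --- which is a good reason to cite Abramov's original paper or the treatments in Petersen or Downarowicz, as you say you would.
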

\begin{fact}[Kac's formula]
Let $E$ be a sweep-out set. Then for every $f\in L^1(\mu)$,
$$
\int f d\mu =\int_E \sum_{k=0}^{\vp_E-1} f\circ T^k d\mu
$$
Particularly, $\int_E \vp_E d\mu_E = 1/ \mu (E)$.
\end{fact}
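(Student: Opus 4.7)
The plan is to reduce by linearity and monotone convergence to the case $f = 1_B$ for measurable $B \subseteq \Omega$, at which point I need only the identity
\begin{equation*}
\mu(B) = \sum_{k \geq 0} \mu(W_k \cap T^{-k} B), \qquad W_k := \{x \in E : \vp_E(x) > k\} = E \cap \bigcap_{j=1}^{k} T^{-j} E^c,
\end{equation*}
with the convention $W_0 = E$. Indeed, for $x \in E$ one has $\sum_{k=0}^{\vp_E(x)-1} 1_B(T^k x) = \sum_{k \geq 0} 1_{W_k \cap T^{-k} B}(x)$, so integrating reproduces the right-hand side of Kac's formula, while the left-hand side is $\int f\,d\mu = \mu(B)$.

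The central step is to evaluate $\mu(W_k \cap T^{-k} B)$ by splitting the $0$-th coordinate into $E$ versus $E^c$ and using $T$-invariance on the $E^c$ branch. Define
\begin{equation*}
V_m(C) := \mu\!\left(\bigcap_{j=0}^{m-1} T^{-j} E^c \,\cap\, T^{-m} C\right), \qquad V_0(C) := \mu(C).
\end{equation*}
Rewriting $\bigcap_{j=1}^{k} T^{-j} E^c \cap T^{-k} B$ as $\bigcap_{j=1}^{k-1} T^{-j} E^c \cap T^{-k}(E^c \cap B) = T^{-1}\!\left(\bigcap_{j=0}^{k-2} T^{-j} E^c \cap T^{-(k-1)}(E^c \cap B)\right)$ and applying $\mu \circ T^{-1} = \mu$ yields $\mu\!\left(\bigcap_{j=1}^{k} T^{-j} E^c \cap T^{-k} B\right) = V_{k-1}(E^c \cap B)$, while an analogous rewriting gives $\mu\!\left(E^c \cap \bigcap_{j=1}^{k} T^{-j} E^c \cap T^{-k} B\right) = V_k(E^c \cap B)$. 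Subtracting produces, for $k \geq 1$, $\mu(W_k \cap T^{-k} B) = V_{k-1}(E^c \cap B) - V_k(E^c \cap B)$, so the sum telescopes:
\begin{equation*}
\sum_{k \geq 0} \mu(W_k \cap T^{-k} B) = \mu(E \cap B) + V_0(E^c \cap B) - \lim_{k \to \infty} V_k(E^c \cap B).
\end{equation*}

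The sweep-out hypothesis enters exactly to annihilate the tail: $V_k(E^c \cap B) \leq \mu\!\left(\bigcap_{j=1}^{k} T^{-j} E^c\right) \to 0$. Together with $V_0(E^c \cap B) = \mu(E^c \cap B)$, the sum collapses to $\mu(E\cap B) + \mu(E^c \cap B) = \mu(B)$, proving the main identity. For the ``particularly'' clause, apply the identity to $f \equiv 1$: $1 = \int d\mu = \int_E \vp_E\,d\mu = \mu(E) \int_E \vp_E\,d\mu_E$, so $\int_E \vp_E\,d\mu_E = 1/\mu(E)$. The main technical obstacle is the index bookkeeping in the telescoping — specifically, noticing that one application of measure preservation identifies the $k$-indexed set with the $(k-1)$-indexed quantity $V_{k-1}(E^c \cap B)$, so that consecutive differences cancel cleanly.
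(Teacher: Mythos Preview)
Your proof is correct. The paper does not actually prove Kac's formula --- it is listed among ``classic results'' (alongside Abramov's and Rokhlin's formul\ae) and simply stated as a Fact without argument. So there is no proof in the paper to compare against; your telescoping argument via the sets $W_k$ and the quantities $V_k(E^c\cap B)$ is a standard and clean way to establish the result, and the use of the sweep-out hypothesis to kill the tail $V_k(E^c\cap B)\to 0$ is exactly where that assumption must enter.
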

\begin{fact}[Rokhlin's Formula]
Let $X$ be a TMS on alphabet $S$. Let $\alpha:= \{[a]| a\in S\}$. Then:
\begin{itemize}
\item if $H_{\mu}(\alpha) < \infty$ then $\ent{\mu} = - \int \log \frac{d\mu}{d\mu \circ T} d\mu$.
\item if $H_{\mu}(\alpha)=\infty$ then $\ent{\mu} \geq - \int \log \frac{d\mu}{d\mu \circ T} d\mu$.
\end{itemize}
\end{fact}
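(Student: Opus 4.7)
The plan is to identify the Jacobian $g_\mu := d\mu/d(\mu\circ T)$ with a conditional probability of the generating partition $\alpha$, and then combine this identification with the Kolmogorov-Sinai theorem (for the equality case) and a finite-sub-partition approximation (for the inequality case).

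The key preliminary step is to show that for every $a \in S$,
\[
E_\mu[1_{[a]} \mid T^{-1}\cal{B}](y) = g_\mu(y) \quad \text{for } \mu\text{-a.e. } y \in [a],
\]
and more generally that $E_\mu[1_{[a]} \mid T^{-1}\cal{B}](y) = g_\mu(a \cdot Ty)\,1_{T[a]}(Ty)$. This is extracted from the defining property of $\widehat{T_\mu}$ (fact \ref{ruelle properties}(1)): for every $\psi \in L^\infty$,
\[
\int (\psi \circ T)\,1_{[a]}\, d\mu \;=\; \int \psi\cdot \widehat{T_\mu}(1_{[a]})\, d\mu,
\]
and the formula $\widehat{T_\mu}(1_{[a]})(x) = \sum_{Ty = x} g_\mu(y)\,1_{[a]}(y)$ reduces, because $T|_{[a]}$ is injective, to the single summand $g_\mu(ax)\,1_{T[a]}(x)$. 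Summing $-1_{[a]}\log(\cdot)$ of this conditional probability over $a \in S$ (and noting $aTy = y$ on $[a]$) yields
\[
H_\mu(\alpha \mid T^{-1}\cal{B}) \;=\; -\int \log g_\mu\, d\mu.
\]

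When $H_\mu(\alpha) < \infty$, $\alpha$ is a one-sided generator of finite entropy, so Kolmogorov-Sinai gives $\ent{\mu} = h_\mu(T, \alpha)$. A standard martingale/Shannon-McMillan identity then yields
\[
h_\mu(T,\alpha) \;=\; H_\mu\Bigl(\alpha \,\Big|\, \bigvee_{k=1}^{\infty} T^{-k}\alpha\Bigr) \;=\; H_\mu(\alpha \mid T^{-1}\cal{B}),
\]
the last equality because $\alpha$ generates $\cal{B}$ for the one-sided shift. Combined with the identification above, this gives the equality. When $H_\mu(\alpha) = \infty$, I would choose an increasing sequence of finite sub-partitions $\alpha_n \uparrow \alpha$ (lump all but the first $n$ states into a single cell). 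For each $n$, $H_\mu(\alpha_n) < \infty$, and since $\bigvee_{k=1}^{N} T^{-k}\alpha_n \subseteq T^{-1}\cal{B}$, the same martingale computation, with the generator equality replaced by an inequality, produces $\ent{\mu} \geq h_\mu(T,\alpha_n) \geq H_\mu(\alpha_n \mid T^{-1}\cal{B})$. Monotone convergence for conditional entropy upgrades $H_\mu(\alpha_n \mid T^{-1}\cal{B}) \uparrow H_\mu(\alpha \mid T^{-1}\cal{B}) = -\int \log g_\mu\, d\mu$, and the inequality follows.

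The main technical obstacle is the Jacobian identification in the preliminary step: the countable fibre $T^{-1}(x)$ in a CMS means one must carefully track which preimage of $Ty$ actually lies in $[a]$, and handle the case $y \notin [a]$ where the formal expression $g_\mu(a\cdot Ty)$ need not vanish even though $1_{[a]}(y) = 0$. A secondary subtle point is the monotone convergence $H_\mu(\alpha_n \mid T^{-1}\cal{B}) \uparrow H_\mu(\alpha \mid T^{-1}\cal{B})$ for countable partitions with infinite Shannon entropy, which I would justify via MCT applied to $-1_{[a]}\log E_\mu[1_{[a]} \mid T^{-1}\cal{B}]$ state by state, or equivalently by the standard Fatou-type inequality for conditional entropy.
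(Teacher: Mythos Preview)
The paper does not actually prove this statement; it lists Rokhlin's Formula among ``classic results'' (alongside Abramov's and Kac's formul\ae) and uses it without argument. So there is no paper proof to compare against.

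Your outline is the standard proof and is correct. The identification $E_\mu[1_{[a]}\mid T^{-1}\cal{B}](y)=g_\mu(y)$ for $y\in[a]$ via the duality in fact \ref{ruelle properties}(1) is exactly right, and since $\mu$ is $T$-invariant one has $g_\mu\in(0,1]$ (it is a $g$-function), so $-\log g_\mu\ge 0$ and all the monotone-convergence steps go through. For the inequality case your chain
\[
\ent{\mu}\ \ge\ h_\mu(T,\alpha_n)\ =\ H_\mu\Bigl(\alpha_n\Bigm|\bigvee_{k\ge 1}T^{-k}\alpha_n\Bigr)\ \ge\ H_\mu(\alpha_n\mid T^{-1}\cal{B})
\]
is correct, and you can bypass the full monotone-convergence statement for conditional entropy by simply observing
\[
H_\mu(\alpha_n\mid T^{-1}\cal{B})\ \ge\ -\sum_{a\le n}\int_{[a]}\log g_\mu\,d\mu\ \xrightarrow[n\to\infty]{}\ -\int\log g_\mu\,d\mu,
\]
since the discarded ``$[>n]$'' term is nonnegative. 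The ``technical obstacle'' you flag about $y\notin[a]$ is not really an obstacle: the information function $I_\mu(\alpha\mid T^{-1}\cal{B})$ only evaluates $E_\mu[1_{[a]}\mid T^{-1}\cal{B}]$ on $[a]$, where your formula gives exactly $g_\mu(y)$.
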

\begin{proof}[Proof of the variational principle, theorem \ref{variational principle}]
The proof follows \cite{sarig1999thermodynamic}. Let $\mu$ be $T$-invariant. We first show that $\pg \geq \sup \{h_{\mu}(T) +\int \phi d\mu\}$. If $\pg = \infty$ this is trivial. Assume $\pg < \infty$. Now, if $\int \phi d\mu = - \infty$ then $\ent{\mu} < \infty$ (so that their difference is well defined). Thus $ \pg \geq \{\ent{\mu} + \int \phi d\mu\} = - \infty$ so this case is also trivial. Now we assume $\int \phi d\mu > -\infty$.\\
wlog we assume $S = \bb{N}$. Set $\alpha_m : = \{[1],[2],...,[m], [>m]\}$ where $[>m]:= \cup_{n > m}[n]$. Let $\mathcal{B}_m := \sigma(\alpha_m)$ (the minimal $\sigma$-algebra that contains $\bigvee_{i=0}^{p-1} T^{-i}\alpha_m$ for every $p\geq 1$). As $m \to \infty$, $\mathcal{B}_m \uparrow \mathcal{B}$ and so, 
\begin{eqnarray*}
\lim_{m \rightarrow \infty} h_{\mu}(T,\alpha_m) + \int \phi d \mu =  h_{\mu}(T) + \int \phi d \mu.
\end{eqnarray*}
Fix $m$ and set $\beta:= \alpha_m$. Let $a_i , 0 \leq i < n$ be atoms of the partition $\beta$ and $\underline a := (a_0,...,a_{n-1})$ an $n$-tuple of those atoms. Define
$$
\la \underline a \ra = \la a_0,...,a_{n-1}\ra := \bigcap_{k=0}^{n-1}T^{-k}a_k,
$$
and set $\phi_n \langle \ua \rangle := \sup\{\phi_n(x): x\in \la \underline a \ra\}$. Recall $\mu$ is $T$-invariant so
\begin{eqnarray*}
\frac{1}{n}H_{\mu}(\beta_{0}^{n}) + \int \phi d\mu &=& \frac{1}{n}\left (H_{\mu}(\beta_{0}^{n}) + \int \phi_n d\mu \right )\leq
\frac{1}{n} \sum_{\lar\in \beta_{0}^{n}} \mu \lar \log \frac{e^{\phi_n \lar}}{\mu \lar}\\
&=& \frac{1}{n} \sum_{a,b \in \beta} \mu(a\cap T^{-n}b) \sum_{\lar \subseteq a\cap T^{-n}b, \lar \in \beta_0^n} \mu (\lar | a\cap T^{-n}b) \log
\frac{e^{\phi_n\lar}}{\mu \lar}\\
&\leq & \frac{1}{n} \sum_{a,b \in \beta} \mu (a \cap T^{-n}b) \log \sum_{\lar \subseteq a\cap T^{-n}b, \lar \in \beta_0^n} e^{\phi_n \lar} +
\frac{1}{n} H_{\mu}(\beta \vee T^{-n} \beta)= (*)\\
\end{eqnarray*}
To see the last inequality, first use Jensen's inequality, then Bayes' rule to get $\frac{\mu(\lar | a\cap T^{-n}b)}{\mu \lar} =\frac{1}{\mu (a \cap T^{-n}b)}$ and simplify. We have
$$
(*) =: \sum_{a,b \in \beta} \mu (a\cap T^{-n}b)P_n(a,b) + O(\tfrac{2}{n} H_{\mu}(\beta)),
$$
where
$$
P_n(a,b) := \frac{1}{n} \log \sum_{\lar \subseteq a\cap T^{-n}b, \lar \in \beta_0^n} e^{\phi_n\lar}.
$$
Taking limit as $n \to \infty$, recalling $\beta = \alpha_m$ we obtain
$$
h_{\mu}(T,\alpha_m) + \int \phi d\mu \leq \limsup_{n\to \infty} \left \{ \sum_{a,b \in \beta} \mu (a\cap T^{-n}b)P_n(a,b) \right \}.
$$
We need to estimate $P_n(a,b)$.

First, assume $a,b \not = [>m]$. Let $M := \sup_{n,k\geq 1}\vari_{n+k}\phi_n$. This exists by the Walters property of $\phi$. Define further for every finite word $\underline a$ its periodic orbit $x_{\underline a}:= (\underline a,\underline a,...)$. Denote the natural partition $\alpha:= \{[1],[2],[3],...\}$ (compare with $\beta = \alpha_m = \{[1],[2],[3],...,[m],[>m]\}$). Since $\alpha_0^n$ is finer than $\beta_0^n$,
\begin{eqnarray*}
P_n(a,b) &=& \frac{1}{n} \log \sum_{\lar \subseteq a\cap T^{-n}b, \lar \in \beta_0^n} e^{\phi_n\lar}\\
&\leq & \frac{1}{n} \log \sum_{[\underline a] \in \alpha_{0}^{n}, [\underline a] \subseteq [a]} e^{\phi_n(x_{\underline a})+M}\\
&=&\frac{M}{n} + \frac{1}{n} \log\sum_{T^{n}x=x} e^{\phi_n(x)}1_{[a]}(x) \xrightarrow[n\to\infty]{} \pg.\\
\end{eqnarray*}
so,
$$
a, b \not = [>m] \Longrightarrow \limsup_{n\to \infty} P_n(a,b) \leq \pg.
$$
Next, we assume $a= [>m]$ or $b= [>m]$. For every $\lar \in \beta_0^n$ s.t. $\lar \subseteq a\cap T^{-n}b$,
$$
\lar = \langle >m,...,>m,\xi_1,...,\xi_j,>m,...,>m \rangle,
$$
where either $i \geq 1$ or $k \geq 1$, $\xi_1, \xi_j\not = [>m]$ and $i+j+k = n+1$. Recalling that $\sup \phi < \infty$ by assumption, for such $i,j,k$ -  $\phi_n \lar \leq (n+1-j) \sup \phi + \phi_j\langle \xi_1,...,\xi_j\rangle$. Summing over all possibilities we have
\begin{eqnarray*}
P_n(a,b) &\leq & \frac{1}{n} \log \left (\sum_{i=0}^{n+1}\sum_{j=1}^{n+1-i}\sum_{\xi_1 =1}^{m}\sum_{\xi_j=1}^{m} e^{jP_j(\xi_1,\xi_J) + (n+1 -j)\sup \phi}   \right )\\
&\leq & \frac{1}{n} \log \left (   e^{\phi_{n+1}\langle >m,...,>m \rangle} +n \sum_{\xi,\eta \not = [>m]} \sum_{j=1}^n
e^{jP_j(\xi,\eta) + n\sup \phi} \right )\\
& \leq & \sup \phi + \frac{1}{n} \log \left (1+ n \sum_{\xi,\eta \not = [>m]} \sum_{j=1}^n e^{jP_j(\xi,\eta)}  \right )\\
&\leq & \sup \phi + \frac{1}{n} \log \left (2\cdot n \sum_{\xi,\eta \not = [>m]} \sum_{j=1}^n e^{jP_j(\xi,\eta)}  \right ) \text{ (for $n\geq N_0$ large enough) }\\
& =& C+ \frac{1}{n}\log \left (\sum_{\xi,\eta \not = [>m]} \sum_{j=1}^n e^{jP_j(\xi,\eta)}  \right ) \text{ ($C$ is just some constant)}\\
\end{eqnarray*}
Since for $\xi,\eta \ne [>m]$ we know that $\limsup\limits_{j\to\infty}P_j(\xi,\eta) \leq \pg$. We conclude that
$$
a = [>m]  \text{ or } b =[>m] \Longrightarrow \limsup_{n\to \infty}P_n(a,b) \leq C',
$$
where $C'$ is some constant.\\
We can now turn to finish the proof using the above analysis.
\begin{eqnarray*}
h_{\mu}(T, \alpha_m) +\int \phi d\mu &\leq & \limsup_{n \to \infty} \lbrace \sum_{a,b \in \beta} \mu(a\cap T^{-n}b)P_n(a,b) \rbrace \\
&\leq & \limsup_{n \to \infty} \lbrace \pg \sum_{a,b \not = [>m]} \mu (a\cap T^{-n}b) + C \sum_{\neg(a,b \not = [>m])} \mu (a\cap T^{-n}b) \rbrace \\
&\leq & \pg \mu (X) + C\mu [>m] + C\mu (T^{-n}[>m])\\
&=& \pg + o(1), \text{ as $m \to \infty$}.
\end{eqnarray*}
We now show the inverse inequality. Fix some $\epsilon >0$ and a topologically mixing compact sub-system $Y\subseteq X$ s.t. $\pg \leq P_G(\phi|_{Y}) +\epsilon$, by the previous lemma on pressure over sub-systems (lemma \ref{pressure over sub systems}). Denote $\psi := \phi|_{Y}$. Since $Y$ is compact, the GRPF theorem (actually, the original Ruelle's Perron-Frobenius theorem suffices here), $\psi$ is positive recurrent and so there exists a positive eigenfunction $h>0$ for Ruelle's operator and a conservative \textit{probability} measure $\nu$ on $Y$ such that $L_{\psi}h = e^{P_G(\psi)}h, L^*_{\psi}\nu =e^{P_G(\psi)}\nu, \int hd\nu = 1$. This measure is indeed a probability measure since it is finite on cylinders and we only have finitely many cylinders of any fixed length. We set $dm = hd\nu$. This is a shift invariant probability measure, since
$$
m(T^{-1}[\underline a]) = \nu(h1_{[\underline a]}\circ T) = \nu( e^{-P_G(\psi)}(L_{\psi}h1_{[\underline a]})) = m [\underline a].
$$
One consequence of the above is that $\psi = \log \frac{d\nu}{d\nu \circ T} +P_G(\psi)$, by the properties of the transfer operator (fact \ref{ruelle properties}) that also apply to Ruelle's operator.
Now, let $\alpha_Y :=\{[a]\cap Y | a\in S'\}$, where we let $S'$ denote the alphabet over which $Y$ is defined. Since $Y$ is compact, $\alpha_Y$ is finite and so $H_m(\alpha_Y) <\infty$. This means we may use Rokhlin's formula:
\begin{eqnarray*}
h_m(T|_Y) &=& -\int_Y \log \frac{dm}{dm\circ T} dm = - \int_Y \log \frac{h}{h\circ T}\frac{d\nu}{d\nu \circ T}dm\\
&=& -\int_Y [\psi -P_G(\psi) +\log h - \log h \circ T] dm\\
&=&P_G(\psi) - \int_Y \psi dm \text { (justification below) }.\\
\end{eqnarray*}
The last equality holds, since $\log h$ is continuous on a compact space ($h>0$) and hence absolutely integrable and $m$ is $T$-invariant.
Thus, $h_m(T|_Y) + \int_Y \psi = P_G(\psi)$. This implies that $P_G(\psi) \leq \sup\{h_{\mu}(T) + \int \phi d\mu\}$. Since by construction, $\pg \leq P_G(\psi) +\epsilon$, we get that $\pg \leq \sup\{h_{\mu}(T) + \int \phi d\mu\}+ \epsilon$. But $\epsilon$ was arbitrary, so we're done.
\end{proof}
\begin{definition}[$g$-functions, \cite{keane}] 
Let $X$ be a TMS. $g:X\to (0,1]$ is a (sub) g-function if $\sum_{Ty=x} g(y) = 1 (\leq 1)$.
\end{definition}
\begin{theorem}[Cohomology for $g$-functions]\label{cohomology g functions}
\ass Suppose $\pg < \infty$.
\begin{enumerate}
\item  If $\phi$ is recurrent, then $\phi - \pg = \log g+ \vp - \vp \circ T$ where $g$ is a g-function, $\log g$ is Walters and $\vp$ continuous .
\item  If $\phi$ is transient, then $\phi - \pg = \log g+ \vp - \vp \circ T$, where $\log g$ is Walters, $g$ is a sub g-function and $\vp$ continuous.
\end{enumerate}
In both cases the cohomology can be done s.t. $\vari_{1}\vp < \infty$.
\end{theorem}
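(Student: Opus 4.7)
The plan is to use the Generalized Ruelle-Perron-Frobenius theorem (theorem \ref{GRPFtheorem}) to construct the desired $g$ and $\varphi$ by explicit normalization of the eigenfunction. Set $\lambda := e^{P_G(\phi)}$. In the recurrent case (1), GRPF provides a positive continuous $h$ with $L_\phi h = \lambda h$. Define
$$
g(x) := \frac{e^{\phi(x)}\, h(x)}{\lambda\, h(Tx)}, \qquad \varphi(x) := -\log h(x).
$$
Taking logarithms and rearranging yields $\phi - P_G(\phi) = \log g + \varphi - \varphi \circ T$, while the $g$-function identity follows directly from the eigenequation:
$$
\sum_{Ty=x} g(y) \;=\; \frac{1}{\lambda h(x)} \sum_{Ty=x} e^{\phi(y)} h(y) \;=\; \frac{L_\phi h(x)}{\lambda h(x)} \;=\; 1.
$$
Positivity $g>0$ is immediate, and $g(y)\leq \sum_{Tz=Ty} g(z) = 1$ gives $g:X\to (0,1]$.

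Next I would verify that $\log g$ is Walters. Telescoping the Birkhoff sums gives
$$
(\log g)_n \;=\; \phi_n - n P_G(\phi) + \log h - \log h \circ T^n,
$$
hence
$$
\vari_{n+k}(\log g)_n \;\leq\; \vari_{n+k}\phi_n + \vari_{n+k}\log h + \vari_{k}\log h \;\leq\; \vari_{n+k}\phi_n + 2\vari_{k}\log h.
$$
The first summand tends to zero uniformly in $n$ as $k\to\infty$ by the Walters property of $\phi$. For the second, one needs $\log h$ to be uniformly continuous on $X$. This is the main obstacle, since $X$ is non-compact and mere pointwise continuity of $h$ from GRPF is insufficient; the argument must use that $h$ arises as a uniform-on-compacts limit of $\lambda^{-n}L_\phi^n 1_{[\underline a]}$ and that iterates of the Ruelle operator preserve the Walters modulus of continuity inherited from $\phi$.

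For the transient case (2), no eigenfunction exists, so instead define
$$
h(x) \;:=\; \sum_{n=0}^\infty \lambda^{-n} \bigl(L_\phi^n 1_{[a]}\bigr)(x)
$$
for a fixed state $a$. Transience ensures this series converges; topological mixing guarantees $h>0$ everywhere (some preimage chain of length $n$ reaches $[a]$ with positive Ruelle weight, for every $x$); and the computation
$$
L_\phi h \;=\; \sum_{n\geq 0}\lambda^{-n} L_\phi^{n+1} 1_{[a]} \;=\; \lambda (h - 1_{[a]}) \;\leq\; \lambda h
$$
shows the same normalization $g := \lambda^{-1}e^\phi h/(h\circ T)$ is now a sub-$g$-function, since $\sum_{Ty=x}g(y) = 1 - 1_{[a]}(x)/h(x)\leq 1$. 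The cohomology identity and the Walters property of $\log g$ go through verbatim.

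Finally, to secure $\vari_1\varphi < \infty$, apply the recoding trick mentioned at the end of section \ref{waltersection}: once $\log h$ is known to be uniformly continuous, $\vari_2\log h < \infty$, and passing to the conjugate shift space whose alphabet consists of $2$-cylinders of $X$ converts $\varphi = -\log h$ into a function with finite first variation while preserving both the Walters property of $\phi$ and that of $\log g$, and carrying the cohomology equation along verbatim. Thus every regularity assertion in the theorem reduces to a single technical point---the uniform continuity of $\log h$ in the non-compact setting---which is where the bulk of the real work lies.
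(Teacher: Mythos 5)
The paper itself provides no proof of this theorem; it simply cites \cite{sar} and \cite{sarig2001phase}. Your construction — normalizing by the GRPF eigenfunction $h$ in the recurrent case, and using a Martin-kernel-type renewal sum $h=\sum_{n\geq 0}\lambda^{-n}L_\phi^n 1_{[a]}$ in the transient case, then setting $g=e^\phi h/(\lambda\,h\circ T)$ and $\vp=-\log h$ — is indeed the standard route taken in those references. The telescoping identity, the $g$-function and sub-$g$-function verifications, and the reduction of the Walters property of $\log g$ to the regularity of $\log h$ are all correct.

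The genuine gap is exactly the one you flag but leave open: you must actually prove that $\vari_m\log h\to 0$ and $\vari_1\log h<\infty$, and you cannot get this from ``uniform-on-compacts convergence'' or from abstract uniform continuity. In the positive recurrent case the paper does contain this argument later, inside the proof of Corollary \ref{corollary2uniqueness}: one writes $h(x)$ as the limit of the Ruelle iterates and runs a ratio estimate $\log\frac{h(x)}{h(y)}\leq\sup_{n\geq 1}\vari_{n+m}\phi_n$ when $x_0^{m-1}=y_0^{m-1}$, which gives both $\vari_m\log h\to 0$ and $\vari_1\log h\leq\sup_n\vari_{n+1}\phi_n<\infty$ (the latter bound is finite by a short splitting argument $\phi_n=\phi_{n-k_0}+\phi_{k_0}\circ T^{n-k_0}$ using the Walters condition). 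In the null recurrent case $\lambda^{-n}L_\phi^n 1_{[a]}\to 0$, so this formula for $h$ fails and you must extract the same ratio bound from however $h$ is built in the GRPF proof; in the transient case you must check that your series converges for \emph{every} $x$ (not just $x\in[a]$; mixing plus transience plus the distortion bound $\sup_n\vari_{n+1}\phi_n<\infty$ does it, but you give no argument), that $h>0$, and that the ratio estimate applies to this $h$ as well. Finally, the recoding step at the end is both logically shaky (uniform continuity alone gives $\vari_m\log h\to 0$, not $\vari_2\log h<\infty$) and unnecessary: once the ratio estimate is in place one gets $\vari_1\log h<\infty$ directly on the original shift, which is what the theorem asserts, whereas recoding would only produce the conclusion on a conjugate system. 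So the architecture of your proof matches the intended one, but the regularity of $\log h$ — which you correctly identify as ``where the bulk of the real work lies'' — is not optional; it is the proof.
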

Proof may be found in \cite{sar,sarig2001phase}. This will be used in the reduction at the beginning of the proof of the uniqueness theorem (theorem \ref{uniqueness theorem}).
%%%%
%%%
%%%%
%Here goes: the theorem and the proof.%%%%%%%%%%
%%%
%%
%
\section{The uniqueness theorem}\label{uniqueness}
The focus of this section is on proving the following theorem, which is a generalization of a theorem from \cite{BuzSar} that was proved there for summable variations potentials. For the uniqueness of equilibrium measures on compact spaces, see \cite{bowen1974unique}.
\begin{theorem}[Uniqueness of equilibrium measures]\label{uniqueness theorem}
\ass Let $\sup \phi < \infty, \pg<\infty$. Then
\begin{enumerate}
\item $\phi$ has at most one \eqm.
\item this \eqm, if exists, equals the RPF measure of $\phi$.
\item In particular, if $\phi$ has an \eqm then $\phi$ is positive recurrent and the RPF measure has finite entropy.
\end{enumerate}
\end{theorem}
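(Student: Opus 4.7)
The plan is to reduce via the $g$-function cohomology of theorem \ref{cohomology g functions} to the case of a $g$-function potential, and then use Rokhlin's formula together with Jensen's inequality to force the Jacobian of any equilibrium measure to coincide with that of the RPF measure.

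First, I would apply theorem \ref{cohomology g functions} to write $\phi - P_G(\phi) = \log g + \varphi - \varphi\circ T$, where $g$ is a (sub-)$g$-function, $\log g$ is Walters, and $\varphi$ is continuous with $\var_1 \varphi < \infty$. A routine check using $\var_1 \varphi < \infty$ shows that equilibrium measures are preserved under this cohomology, while RPF measures correspond via the transformations $h \mapsto e^{-\varphi} h$ and $d\nu \mapsto e^{\varphi}\, d\nu$ of the eigenfunction and eigenmeasure. Thus I may assume $\phi = \log g$, in which case $P_G(\log g) \leq 0$ with equality iff $\phi$ is recurrent.

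Now let $\mu$ be an equilibrium measure. Since $\sup\phi < \infty$ and $P_G < \infty$, we have $h_{\mu}(T) < \infty$ and $h_\mu(T) + \int \log g\, d\mu = P_G(\log g) \leq 0$. The identity $\widehat{T_\mu}(g/g_\mu) = L_{\log g} 1 \leq 1$, combined with the adjoint property $\int \widehat{T_\mu} f\, d\mu = \int f\, d\mu$ of fact \ref{ruelle properties}, gives $\int (g/g_\mu)\, d\mu \leq 1$, and Jensen's inequality then produces $\int \log(g/g_\mu)\, d\mu \leq 0$. If Rokhlin's formula held with equality on $X$, the chain $-\int \log g_\mu\, d\mu = h_\mu(T) = -\int \log g\, d\mu + P_G(\log g)$ would yield $\int \log(g/g_\mu)\, d\mu = -P_G(\log g) \geq 0$, forcing $g = g_\mu$ $\mu$-a.e.\ and $P_G(\log g) = 0$. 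Because $H_\mu(\alpha)$ may be infinite on $X$, I would circumvent the inequality version of Rokhlin by passing to the induced system on a state $[a]$ with $\mu[a] > 0$ (which exists by mixing and ergodicity): the induced potential $\log \tilde g$ with $\tilde g := \exp((\log g)_{\varphi_a})$ remains a $g$-function on the induced full shift, the natural first-return partition has finite Shannon entropy via Abramov--Kac and the finiteness of $h_\mu$, and the same Jensen/Rokhlin chain then runs with equality on the induced side, yielding $\tilde g = \tilde g_{\mu_{[a]}}$ $\mu_{[a]}$-a.e.\ and $P_G(\log g) = 0$.

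Once the Jacobian of $\mu_{[a]}$ is prescribed as $\tilde g$, its values on cylinders inside $[a]$ are determined by iterating the Jacobian relation, and $\mu$ on every cylinder of $X$ is then recovered via Kac. The RPF measure $dm = h\, d\nu$ of $\log g$ satisfies the same Jacobian identity (since $L_{\log g}^* m = m$ and $L_{\log g} = \widehat{T_m}$ when $\log g$ is the log-Jacobian of $m$), so it coincides with $\mu$, giving (1) and (2). Part (3) follows immediately: the existence of an equilibrium measure produces an RPF probability measure, which by theorem \ref{GRPFtheorem} is exactly positive recurrence, and finiteness of the RPF measure's entropy is $h_m(T) = h_\mu(T) < \infty$ from the first step. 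The main obstacle in this plan is the inducing step, namely verifying that $\log \tilde g$ remains Walters on the induced full shift (so the transfer-operator formalism and the GRPF machinery continue to apply) and that the passage between $h_\mu(T)$ and $H_{\mu_{[a]}}(\alpha_{[a]})$ genuinely yields a finite Shannon entropy for the induced partition.
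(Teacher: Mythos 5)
Your overall architecture mirrors the paper's: reduce via theorem \ref{cohomology g functions} to a $g$-function potential, then use Rokhlin's formula together with Jensen's inequality to identify the Jacobian of any equilibrium measure with $g$. However, there are three places where the paper does serious work that your sketch either waves away or flags as an ``obstacle'' without resolving it, and these are precisely where the proof lives.

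First, ``a routine check using $\vari_1\varphi<\infty$ shows that equilibrium measures are preserved'' is not routine: $\varphi$ is continuous but may be unbounded, so it is not clear that $\varphi-\varphi\circ T\in L^1(\mu)$ or that $\int(\varphi-\varphi\circ T)\,d\mu=0$. The paper proves this (its first claim) using one-sided integrability, Birkhoff on each ergodic component, and Poincar\'e recurrence to kill the $\frac1n(\varphi-\varphi\circ T^n)$ limit; this cannot be skipped.

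Second, you invoke ``mixing and ergodicity'' to find a state $[a]$ that is a sweep-out set. Ergodicity of an arbitrary equilibrium measure cannot be assumed at this stage without circularity; if two distinct ergodic equilibrium measures existed, their average would be a non-ergodic equilibrium measure. The paper avoids this by decomposing $\mu$ as an at most countable convex combination $\sum p_i\mu_i$ where each $\mu_i$ is itself an equilibrium measure with a designated sweep-out state $[a_i]$ (its second claim), and then running the induction argument on each $\mu_i$ separately. Your proposal needs a substitute for that decomposition.

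Third, and most seriously, the finiteness of $H_{\mu_{[a]}}(\beta)$ for the first-return partition $\beta$ does \emph{not} follow from ``Abramov--Kac and the finiteness of $h_\mu$'': Abramov gives $h_{\mu_{[a]}}(\overline T)<\infty$, and since $\beta$ generates, Kolmogorov--Sinai only yields $h_{\mu_{[a]}}(\overline T)\le H_{\mu_{[a]}}(\beta)$ --- an inequality in the useless direction. This is exactly the gap the paper closes with its auxiliary Bernoulli measure construction: it builds a product measure $\overline\mu_i^B$ on the induced system with the same $\beta$-marginals as $\overline\mu_i$, so that $h_{\overline\mu_i^B}(\overline T)=H_{\overline\mu_i^B}(\beta)=H_{\overline\mu_i}(\beta)$, pushes it forward to a shift-invariant probability $\mu_i^B$ on $X$ via Kac, proves $\phi\in L^1(\mu_i^B)$ using the Walters control $\sup_n\vari_{n+1}\phi_n<\infty$ on cylinders, feeds it into the variational principle to bound $h_{\mu_i^B}(T)<\infty$, and then applies Abramov in reverse. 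You flagged this as ``the main obstacle,'' and indeed it is: without a replacement for the Bernoulli-measure trick, the Rokhlin/Jensen chain you want to run on the induced system is not justified, and the proof does not go through.
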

\begin{proof}
Let us first assume $\mu$ is an \eqm for $\phi$. By subtracting a constant from $\phi$ we may assume, wlog, that $\pg = 0$ . By the previous theorem, $\phi = \log g+ \vp - \vp \circ T$ where $g$ is sub g-function, $\vp$ is continuous and $\vari_1\vp < \infty$.
We first show that
$$
\Lp e^{-\vp} = e^{-\vp}\ \mbox{and} \ \Lp^*(e^{\vp}\mu) =e^{\vp} \mu.
$$
This implies that $\mu= e^{-\vp}(e^{\vp}\mu)$ is an RPF measure. By proposition \ref{UniquenessRPF} it is unique. We divide the proof into several claims.
\begin{claim}
$\phi, \log g, \vp - \vp \circ T \in L^1(\mu)$ and  $\int (\vp -\vp \circ T) d\mu = 0$. Also, let $\mu = \int_{X} \mu_x d\mu(x)$, the ergodic decomposition of $\mu$, then $\int( \vp - \vp \circ T) d\mu_x = 0$ for $\mu$ a.e. $x$.
\end{claim} %%%first claim%%%
\begin{proof}
By assumption, $\sup \phi < \infty$. Note that $\int \phi d\mu > -\infty$, since $0 = \pg = \ent{\mu} + \int \phi d\mu$ is well defined ($\mu$ is an \eqm and entropy is non-negative). Hence $\phi \in L^1(\mu)$.\\
This implies $\phi \in L^{1}(\mu_{x})$ for $\mu$-a.e. $x$. Since $\log g \leq 0$ ($g$ is a sub g-function), $\vp - \vp \circ T = \phi - \log g$ is one sided integrable in $\mu_x$ for $\mu$ a.e. $x$ (i.e. $\int \vp - \vp \circ T d\mu_x$ is real or $\infty$ for $\mu$ a.e. $x$). We may use Birkhoff's theorem on the generic $x$'s from above.\\
This is how we do this. First, to make notation easy, let $\vp - \vp \circ T =: \psi$.  For a fixed $x$, if $\int \psi d\mu_x <\infty$, Birkhoff's theorem applies without further justification. If, however, $\int \psi d\mu_x = \infty$ (recall $\psi$ is one-sided integrable), let $M>0$. Then $\psi 1_{[\psi <M]}$ is integrable in $\mu_x$. Now $(\psi 1_{[\psi<M]})_n \leq \psi_n$. By Birkhoff's theorem,
$\frac{1}{n}\liminf \limits_{n\to\infty}\psi_n \geq \lim\limits_{n\to\infty}\frac{1}{n} (\psi 1_{[\psi <M]})_n =\int \psi 1_{[\psi <M]} d \mu_x$, $\mu_x$ a.e. (recall $\mu_x$ is ergodic). Now let $M \to \infty$ and get that $\lim\limits_{n\to\infty}\psi_n = \infty$, $\mu_x$ a.e. so Birkhoff's theorem still holds. This implies:
$$
\int(\vp -\vp \circ T)d\mu_{x} = \lim_{n\rightarrow \infty}\frac{1}{n} \sum_{k=0}^{n-1} (\vp -\vp \circ T)\circ T^{k}= \lim_{n\to \infty} \frac{1}{n}(\vp - \vp \circ T^{n})\text{   a.e. in $\mu_x$}
$$
By Poincar\'e's recurrence theorem, $|\vp - \vp \circ T^n| \leq 1$ i.o., so the limit equals zero.
This holds for a.e. ergodic component, so $\vp - \vp \circ T \in L^1(\mu)$ and $\int(\vp - \vp \circ T) d\mu= 0$. Also $\log g \in L^1(\mu)$. This completes the proof of the claim.
\end{proof}
Having proven the first claim, we proceed with another claim.
\begin{claim} $\mu$ is an at most countable convex combination of equilibrium measures $\mu_i$ s.t. for each $i$ there is a state $a_i$ s.t. $[a_i]$ is a sweep out set.
\end{claim}
\begin{proof}
Let $\int_X\mu_xd\mu(x)$ be the ergodic decomposition of $\mu$. Let $\{a_1,a_2,...\}$ be a list of states s.t. $\mu[a_i] > 0$. Let
$$
E_i:= \{x\in X|\mu_x[a_1],...,\mu_x[a_{i-1}] = 0, \mu_{x}[a_i] > 0\}.
$$
Note that $E_i$ are measurable (as intersection of such) and disjoint. Clearly $\biguplus E_i = X$. Let $p_i:= \mu (E_i)$ so $\sum p_i =1$. We discard all $E_i$ s.t. $p_i = 0$. Define $\mu_i:= \frac{1}{\mu (E_i)}\int_{E_i}\mu_xd\mu(x)$. Note-
$$
\mu = \sum \mu (E_i)\cdot \mu_i =\sum p_i \mu_i.
$$
By construction, all the ergodic components of $\mu_i$ charge $[a_i]$ so  it is a sweep out set w.r.t every ergodic component of $\mu_i$. So  $ \mu_i [a_i]>0$ and $[a_i]$ is a sweep out set for $\mu_i$.\\
Now, assume $\{p_i\}$ is a finite collection of measures. Then for every $i$ we get by the variational principle that-
$$
\ent{\mu_i}+ \int \phi d \mu_i \leq \pg = 0
$$
and by affinity of the entropy and linearity of the integral
$$
\sum p_i( \ent{\mu_i}+ \int \phi d \mu_i ) = \pg =0.
$$
Hence, $\ent{\mu_i}+ \int \phi d \mu_i = 0$ and so $\mu$ is a convex combination of the required equilibrium measures.\\
Now we assume $\{p_i\}$ is countable. For any $N$ write $q_{N+1} = \sum_{i>N} p_i$ and so $\mu^{*}_{N+1}:= \frac{1}{q_N}\sum_{i>N} p_i\mu_i$ is a probability measure. Apply the same argument on $\mu_1,...,\mu_N,\mu^*_{N+1}$ and send $N$ to infinity. This gives the required decomposition and the claim is proved.
\end{proof}
We proceed towards the proof of the main theorem with yet another claim.
\begin{claim}
Using the same notation, for all $i$, $\ent{\mu_i} = -\int \log \frac{d\mu_i}{d\mu_i \circ T} d\mu_i$.
\end{claim}
\begin{proof}
Fix $i$. Let $\vp_{a_i}(x):X\rightarrow \bb{N}$ be the first return time to $[a_i]$ and $\overline{T} :=T^{\vp_{a_i}}$ the induced transformation on $[a_i]$. Since $\mu_i$ is an \eqm we have that $\overline{T}$ preserves $\overline \mu_i := \mu_i(\ \cdot\  |[a_i])$ and admits the Markov partition $\beta := \{[a_i, \xi_1,...,\xi_{n-1},a_i]| n\geq 1 \ \wedge \ \xi_j \not = a_i, 1\leq j\leq n-1\} \backslash \{\varnothing\}$. This is a generator for $\overline T$. Assume for a moment that $H_{\overline \mu_i}(\beta) < \infty$. This assumption implies the claim as follows:
\begin{eqnarray*}
\frac{1}{\mu_i[a_i]}\ent{\mu_i} = h_{\overline{\mu_i}}(\overline{T})
&=& - \int\log \frac{d\overline{\mu_i}}{d\overline{\mu_i} \circ \overline{T}}d\overline{\mu_i} \text{ (Abramov's and Rokhlin's formul\ae )}\\
&=&-\frac{1}{\mu_i[a_i]} \int _{[a_i]} \log \frac{d\mu_i}{d\mu_i \circ T^{\vp_{a_i}}} d\mu_i\\
&=&-\frac{1}{\mu_i[a_i]} \int _{[a_i]} \sum_{j=0}^{\vp_{a_i}-1} \log \frac{d\mu_i}{d\mu_i \circ T} \circ T^{j} d\mu_i \text{ (see below)}\\
&=& -\frac{1}{\mu_i[a_i]}\int_X \log \frac{d\mu_i}{d\mu_i \circ T} d\mu_i,
\end{eqnarray*}
Where we've used Kac's formula and fact \ref{nu comp T}.
We now show that $\beta$ is a generator with finite entropy. Define a Bernoulli measure $\overline \mu_i^B(\cap_{j=1}^n \overline T^{-j}B_j):= \prod_{j=1}^n \overline\mu_i (B_j)$ where   $B_j \in \beta$ are partition sets. Note that for any $B\in \beta$ we have $\overline \mu_i^B(B) = \overline\mu_i (B)$. This is easily seen to be a Bernoulli measure and so $h_{\overline \mu_i^B}(\overline T) = H_{\overline \mu_i^B}(\beta)= H_{\overline \mu_i}(\beta)$. We proceed to show $h_{\overline \mu_i^B}(\overline T) < \infty$. Now define
\begin{eqnarray*}
\mu_i^B(E) &:=& \mu [a_i] \int_{[a_i]} \sum_{j=0}^{\vp_{a_i}-1} 1_E \circ T^j d\overline\mu_i^B . \\
\end{eqnarray*}
Note that by its definition with the aid of Kac's formula, $\mu_i^B ( \cdot\ |[a_i]) = \overline \mu_i^B$. Also, this measure is a probability measure, since
\begin{eqnarray*}
\mu_i^B(X) &=&\mu [a_i] \int_{[a_i]} \vp_{a_i} d\overline \mu_i^B\\
&=&\mu [a_i] \sum_{B\in \beta} \overline \mu_i^B (B) \cdot length(B) \mbox{ (The return time is constant on partition sets)}\\
&=& \mu [a_i] \sum_{B\in \beta} \overline \mu_i (B) \cdot length(B) = \mu [a_i] \int_X \vp_{a_i} d\overline \mu_i =1,
\end{eqnarray*}
using Kac's formula and the definition of $\overline \mu_i$. Now, $\overline \mu_i^B$ is $\overline T$ - ergodic (as a Bernoulli measure). If $f$ is a measurable $\mu_i^B$ invariant function, then $f|_{[a_i]}$ is $\overline T$ invariant and so it is constant. Since $[a_i]$ is sweep-out, $f$ is constant a.e. This implies that $\mu_i^B$ is $T$ -ergodic (we'll use this observation later).
We now proceed to show that $\phi \in  L^1(\mu_i^B)$. Set $M:= \sup_{n\geq 2} \vari_{n+1} \phi_n$ (by the Walters property of $\phi$) and define $\overline \phi:= \sum_{j=0}^{\vp_{a_i} -1} \phi \circ T^j$.\\
By the definition of the partition $\beta$, partition sets are cylinders. We also see from the same definition that $length(B)-1 = \vp_{[a_i]}(x)$ for any $x\in B$. So $\vp_{[a_i]}$ is constant on partition. Let $B\in \beta$ and fix $x_B \in B$. For any $y\in B$ we get that
\begin{eqnarray*}
|\ |\overline \phi (x_B) | - |\overline \phi (y)| \ | &\leq & |\overline \phi (x_{B}) -\overline \phi (y)|\\
&\leq & \vari_{length(B)+1}\phi_{length(B)} \leq M  \text{ (since $length(B) \geq 2$) },
\end{eqnarray*}
so $\biggl |\ |\overline \phi (x)| - |\overline \phi (y)|\ \biggr| \leq 2M$ for every $x,y \in B$ and every $B\in \beta$. Since $\overline \mu_i^B (B) = \overline \mu_i (B)$, $\overline \phi\in L^1(\overline \mu_i^B) \Leftrightarrow \overline \phi\in L^1(\overline \mu_i)$.
Since
\begin{eqnarray*}
\int |\overline \phi | d\overline \mu_i &\leq & \int \overline{|\phi |} d\overline \mu_i\\
&=& \frac{1}{\mu_i [a_i]} \int  | \phi |d \mu_i \text{ (Kac's formula)}\\
&\leq & \frac{1}{\mu_i [a_i]\cdot \mu (E_i)}\int |\phi | d\mu < \infty \text{ (First claim)},\\
\end{eqnarray*}
we see that $\overline \phi \in L^1(\overline \mu_i)$ which is equivalent to $\overline \phi \in L^1(\overline \mu_i^B)$. This means that $\phi \in L^1 (\mu_i^B)$.
Now, that last fact implies that $h_{\mu_i^B}(T) + \int \phi d\mu_i^B$ is well defined, so by the variational principle - $h_{\mu_i^B}(T) + \int \phi d\mu_i^B \leq 0 = \pg$. We get that  $h_{\mu_i^B}(T) \leq -\int \phi d\mu_i^B < \infty$. Abramov's formula now shows (recall $[a_i]$ is a sweep-out set) that $h_{\overline \mu_i^B} (\overline T) = \frac{1}{\mu_i^B [a_i]} h_{\mu_i^B}(T) <\infty$ and we are done.
\end{proof}
Proceed with the last claim we need to complete the proof of the uniqueness theorem.
\begin{claim}
$\frac{d\mu_i}{d\mu_i \circ T} =g\ \mu_i$-a.e. for all $i$.
\end{claim}
\begin{proof}
Let $g_i := \frac{d\mu_i}{d\mu_i \circ T}$. Let $\widehat T_{\mu_i}$ be the transfer operator of $\mu_i$. We have $\widehat T_{\mu_i}f = \sum_{Ty=x}g_i(y)f(y)$. By the properties of the transfer operator, (proposition \ref{ruelle properties}), $\widehat T_{\mu_i} 1$ is the unique $L^1$ element s.t. $\forall \vp\in L^{\infty}$ the following holds: $\int \vp \widehat T_{\mu_i}1 d\mu_i = \int \vp \circ T d\mu_i = \int \vp d\mu_i$ by $T$'s $\mu_i$ invariance. Thus, $\mu_i$ a.e $\widehat T_{\mu_i}1 =1$. This implies that $g_i$ is a g-function. Now the construction from the first step shows that for almost every ergodic component $\int \vp - \vp \circ T d\mu_x =0$. By definition of $\mu_i$ (and the fact that we discarded all $i$'s for which $\mu (E_i) = 0$) we get that $\forall i,\ \int (\vp - \vp \circ T) d\mu_i =0$. Thus:
\begin{eqnarray*}
0&=& \ent{\mu_i} + \int \phi d\mu_i \text{ ($\mu_i$ is an equilibrium measure) }\\
&=&\ent{\mu_i} + \int \log g d\mu_i \text{ (cohomology and claim)}\\
&=&\int \log \frac{g}{g_i} d\mu_i \text{ (previous claim)}\\
&=&\int \widehat T_{\mu_i} \log \frac{g}{g_i} d\mu_i \text{ (since $\widehat T_{\mu_i}^*\mu_i = \mu_i$)}\\
&=&\int \biggl ( \sum_{Ty=x} g_i(y) \log \frac{g(y)}{g_i(y)} \biggr) d\mu_i(x) =: (*) \text{ (by definition) }.\\
\end{eqnarray*}
The term in brackets is defined for $\mu_i$-a.e. $x$, so if there exists some set $A$ with $\mu_i (A)>0$ s.t. for every $x\in A$ there exists $y\in T^{-1}\{x\}$ with $g_i(y) < 0$, the above term would be undefined on a set of positive $\mu_i$ measure, a contradiction. So for our purposes, $g_i(y) \geq 0$. We sum over those $y$'s for which $g_i(y) > 0$. This does not change the sum, since we neglect only $y$'s for which $g_i(y)=0$ (and agree that $0\log 0 = 0\log\infty = 0$). This allows us to use Jensen's inequality (recalling that $\log$ is concave):
\begin{eqnarray*}
(*) &=& \int ( \sum_{Ty=x, g_i(y)>0} g_i(y) \log \frac{g(y)}{g_i(y)} ) d\mu_i(x)\\
&\leq & \int \log \sum_{Ty=x, g_i(y) >0} g(y) d\mu_i(x) \\
&\leq & 0 \text{ (since $g$ is a sub g-function)}\\
\end{eqnarray*}
and the inequalities are actually equalities. Using Jensen's inequality again, recalling $g_i(y)$ is a $g$-function and $g$ is a sub-$g$-function, we get that $\sum_{Ty=x, g_i(y)>0}g_i(y)\log\frac{g(y)}{g_i(y)} \leq \log \sum_{Ty=x,g_i(y)>0} g(y) \leq 0$. This implies that for $\mu_i$ a.e.-$x$, $\forall y\in T^{-1}\{x\}$ $c(x)g_i(y)=g(y)$. Thus,
$$
0=\sum_{Ty =x,g_i(y)>0} g_i(y) \log \frac{g(y)}{g_i(y)} =\log c(x) \sum_{Ty =x,g_i(y)>0} g_i(y) =\log c(x) \sum_{Ty=x}g_i(y)=\log c(x) \text{ $\mu_i$-a.e. $x$},
$$
since $g_i$ is a g-function. So $c(x) = 1$ for $\mu$ a.e.-$x$ and $g = g_i$ $\mu_i$ a.e..
\end{proof}
%%%%%%%%%%%
We now complete the proof. First, notice that by assumption, $\lambda = \exp \pg = e^0 =1$ and so by last claim $L^*_{\log g} \mu_i=\mu_i$. The following holds by the cohomology relation from the beginning.
\begin{eqnarray*}
\int f e^{\vp} d\mu_i &=& \int f e^{\vp} d L_{\log g}^* \mu_i = \int L_{\log g}(fe^{\vp}) d\mu_i \\
&=& \int \sum_{Ty=x} \exp(\phi - \vp + \vp \circ T)(y) (f\cdot e^{\vp}) (y) d\mu_i (x)\\
&=& \int e^{\vp (x)} \sum_{Ty=x} e^{\phi (y)} f(y) d\mu_i (x) = \int (L_{\phi} f)\cdot e^{\vp} d\mu_i = \int f d(L_{\phi}^* e^{\vp} \mu_i)\\.
\end{eqnarray*}
So we got $L_{\phi}^*(e^{\vp}\mu_i)=e^{\vp}\mu_i$ for every $i$. Since $\mu = \sum p_i\mu_i$, this holds for $\mu$: $L_{\phi}^*(e^{\vp}\mu)=e^{\vp}\mu$. Now we show $L_{\phi}e^{-\vp} = e^{-\vp}$. We saw already that $L_{\log g}1 =1$ $\mu_i$ a.e. for every $i$, hence $L_{\log g}1 =1$ $\mu$ a.e. Now
\begin{eqnarray*}
1= L_{\log g} 1 &=& \sum_{Ty =x} e^{\log g(y)} = \sum_{Ty=x} \exp (\phi -\vp +\vp \circ T)\\
&=& e^{\vp}\sum_{Ty=x}e^{\phi (y)} e^{-\vp}(y) = e^{\vp} L_{\phi}e^{-\vp},\\
\end{eqnarray*}
so $L_{\phi} e^{-\vp} = e^{-\vp} \ \mu$-a.e. We now need the following fact:
%%%
%%
%%
\begin{fact}
\ass Let $\nu$ be such that $L_{\phi}^*\nu = \lambda \nu$ with $\lambda >0$. Then for any cylinder $[\underline a]$, $\nu [\underline a] >0$.
\end{fact}
\begin{proof}
Let $n :=|\underline a |$. Fix $p\in S$ and denote $N:= N_{a_np}$ the length of the path $a_n \to p$ we get from the topological mixing property.  Then $\nu [\underline a] \geq \int_{[p]} 1_{[\underline a]}(x)d\nu (x) =\lambda^{-(N+n)} \int_{[p]} L_{\phi}^{N+n} 1_{[\underline a]} d\nu = \lambda^{-(N+n)} \int_{[p]} \sum_{T^{N+n}y=x} e^{\phi_{N+n} (y)}1_{[\underline a]}(y) d\nu (x)$.\\
We show that $\forall x \in [p], \sum_{T^{N+n}y=x} e^{\phi_{N+n} (y)}1_{[\underline a]}(y) >0$. The exponent is always positive, so it can be ignored. We ask whether for every $x\in [p]$ there exists $y\in T^{-(N+n)}\{x\}$ and $y_{0}^{n} = \ua$. This is true since after $N$ preimages of $x$ are taken, there has to be one preimage with a prefix $(a_n,...,p)$. Taking further $n$ preimages guarantees one of them will be have prefix $(\ua,p)$. For that preimage, $1_{[\ua]} =1$ and at least one summand is positive. Hence $\sum_{T^{N+n}y=x} e^{\phi_{N+n} (y)}1_{[\underline a]}(y) > 0$ on $[p]$ and we are done.
\end{proof}
$L_{\phi}^* e^{\vp} \mu = e^{\vp} \mu$ implies that for every word $\underline a$ we have $\int_{[\underline a]} e^{\vp} d\mu >0$, hence $\mu[\ua] > 0$ for every cylinder $[\ua]$. A property that holds a.e. for a measure that is positive on open sets necessarily holds on a dense set, so $L_{\phi}e^{-\vp}= e^{-\vp}$ on a dense set. Continuity of $\vp$ (theorem \ref{cohomology g functions}) implies equality holds everywhere. By the discussion at the beginning, we are done.
\end{proof}
%%%
%%%%
%the end of the big proof. Pheeewwwwww.....
%%%
%%%
We finish with a corollary which will be importatnt in the next section.
\begin{corollary}\label{corollary2uniqueness}
\ass Let $\sup \phi < \infty, \pg<\infty$ (as in the theorem) and let $\mu$ be the \eqm of $\phi$. Then for every finite sub-alphabet $S^* \subseteq S$ there exists a constat $C^* = C^*(S^*) > 1$ s.t. for every $m,n \geq 1$, every n-cylinder $[\underline a]$ and every m-cylinder $[\underline c]$ the following hold:
\begin{enumerate}
\item if the last letter in $\underline a$ is in $S^*$ and $[\underline a,\underline c] \not = \emptyset$ then $\frac{\mu [\underline a,\underline c]}{\mu [\underline a]\mu [\underline c]} = (C^*)^{\pm 1}$.
\item if the first letter in $\underline a$ is in $S^*$ and $[\underline c,\underline a] \not = \emptyset$ then $\frac{\mu [\underline c,\underline a]}{\mu [\underline a]\mu [\underline c]} = (C^*)^{\pm 1}$.
\end{enumerate}
\end{corollary}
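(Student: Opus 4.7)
The plan is to exploit the RPF structure of $\mu$ furnished by theorem \ref{uniqueness theorem}: $\mu=h\nu$ with $h=e^{-\vp}>0$ continuous and $\vari_1\vp<\infty$, $\nu$ conservative and finite on cylinders, $\Lp h=\lambda h$, $\Lp^{*}\nu=\lambda\nu$, $\lambda=e^{\pg}$. The cohomology $\phi-\pg=\log g+\vp-\vp\circ T$ from theorem \ref{cohomology g functions} identifies $g$ with the Jacobian of $\mu$, and iterating $L_{\log g}^{*}\mu=\mu$ yields the Jacobian identity
$$
\mu\bigl([\ua]\cap T^{-n}B\bigr)=\int_{B}g_n\bigl((\ua,x)\bigr)\,d\mu(x)
$$
for every $n$-cylinder $[\ua]$ and every Borel $B\subseteq X_\ua:=\{x : A_{a_{n-1},x_0}=1\}$. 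Writing $G_\ua(x):=g_n((\ua,x))$, this gives $\mu[\ua,\underline c]=\int_{[\underline c]}G_\ua\,d\mu$ and $\mu[\ua]=\int G_\ua\,d\mu$, whence
$$
\frac{\mu[\ua,\underline c]}{\mu[\ua]\,\mu[\underline c]}=\frac{\mathbb{E}_\mu[G_\ua\mid[\underline c]]}{\mathbb{E}_\mu[G_\ua]}.
$$

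The crucial estimate is a Gibbs-type bounded-distortion bound: there is $C^*=C^*(S^*)$ such that for every $n$-cylinder $[\ua]$ with $a_{n-1}\in S^*$ and every cylinder $[\underline d]$ with $[\ua,\underline d]\ne\emptyset$,
$$
\frac{\mu[\ua,\underline d]}{\mu[\ua]\,\mu[\underline d]}\in\bigl[(C^*)^{-1},\,C^*\bigr].
$$
To establish this local Gibbs property one combines: (i) the Walters property of $\log g$, controlling the oscillation of $\log g_n((\ua,\cdot))$ on any $K$-cylinder, where $K$ is the Walters threshold with $\sup_n\vari_{n+K}(\log g)_n\le 1$; (ii) finite first variation $\vari_1\log g<\infty$ (inherited from $\vari_1\vp<\infty$ through the cohomology) to control the last summand of $\log g_n((\ua,x))=\sum_k\log g(T^k(\ua,x))$, where only the leading coordinate $a_{n-1}$ is shared; (iii) finiteness of $S^*$ (providing uniformity over the possible values of $a_{n-1}$) together with topological mixing to propagate uniform bounds across the infinitely many possible first letters of $x\in X_\ua$; and (iv) the normalization $\sum_{Ty=x}g(y)=1$ of the $g$-function, which supplies a matching average of $G_\ua$ to close the upper bound. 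Given the local Gibbs property, the corollary follows immediately, with short $\underline c$ handled by partitioning $[\underline c]$ into longer sub-cylinders and summing.

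The main obstacle is the upper bound in the distortion estimate: the lower bound follows readily from the Walters tail control on $K$-cylinders, but the upper bound requires propagating the Gibbs inequality uniformly over all possible successor chains in $X_\ua$, which is precisely where the finiteness of $S^*$ is essential. The second item of the corollary, where the first letter of $\ua$ is required to lie in $S^*$, is proved by the symmetric argument applied to the dual identity $\mu[\underline c,\ua]=\int_{[\ua]}g_m((\underline c,x))\,d\mu(x)$, with the distortion analysis now performed at the reversed interface and drawing uniformity from $a_0\in S^*$.
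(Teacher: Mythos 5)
Your outline identifies the right objects — the log-Jacobian $\log g$, the identity $\mu[\ua,\underline c]=\int_{[\underline c]}g_n((\ua,x))\,d\mu(x)$, and the four ingredients (Walters control, finite $S^*$, the $g$-function normalization) — and this is indeed the spirit of the argument. But there is a genuine gap, and one of your four ingredients is not available.

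The gap you yourself flag is real: you never establish the distortion estimate, and in particular the upper bound, which is precisely the content of the corollary. Saying one would ``propagate the Gibbs inequality uniformly over all possible successor chains in $X_\ua$, which is precisely where the finiteness of $S^*$ is essential'' is a restatement of what must be proved, not a proof. The paper, for its part, also does not carry out the distortion argument in-house: it reduces to showing that $\phi^*:=\phi+\log h-\log h\circ T+\log\lambda$ is Walters and then invokes the distortion argument of \cite{sar2} ``verbatim.'' So the actual work in the paper's proof of this corollary is the two claims $\sup_{n\ge1}\vari_{n+m}\phi^*_n\to0$ and $\sup_{n\ge1}\vari_{n+1}\phi^*_n<\infty$, proved by analyzing $h=\lim_m L_\phi^m 1_{[a]}$ directly. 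Your proposal instead pulls ``$\log g$ is Walters'' from Theorem \ref{cohomology g functions}, which is legitimate (the uniqueness proof identifies $g$ with the Jacobian of $\mu$, and $\phi^*=\log g$ after normalizing $\pg=0$), and is a defensible shortcut — but since the heavy lifting still lives in the distortion estimate, and you don't supply it, the proposal remains a sketch.

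Your item (ii) is also wrong as stated. You claim $\vari_1\log g<\infty$ is ``inherited from $\vari_1\vp<\infty$ through the cohomology.'' But $\log g=\phi-\pg-\vp+\vp\circ T$, so $\vari_1\log g$ is controlled by $\vari_1\phi+\vari_1\vp+\vari_1(\vp\circ T)$, and the corollary's hypotheses give no bound on $\vari_1\phi$ (Walters only forces $\vari_2\phi<\infty$; the main theorem's assumption $\vari_1\hat f<\infty$ is not part of this one-sided corollary). What the paper actually establishes is $\vari_1\log h<\infty$ — a different quantity, and one that is available because $h$ is an eigenfunction given by the RPF limit, so $\log h(x)-\log h(y)$ for $x_0=y_0$ can be bounded via $\sup_n\vari_{n+1}\phi_n$ alone — and this feeds into $\sup_{n\ge1}\vari_{n+1}\phi^*_n<\infty$, which is all the \cite{sar2} argument needs. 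The distortion bound is applied to $g_n((\ua,x))/g_n((\ua,y))$ for $x,y$ sharing the coordinate after $\ua$ as well, i.e.\ using $\vari_{n+1}(\log g)_n$, so a first-variation bound on $\log g$ itself is neither obtainable here nor required.
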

\begin{proof}
Since $\mu$ is an equilibrium measure, it is an RPF measure and $\phi$ is positive recurrent. Thus the convergence $\lambda^{-n} \Lp^n 1_{[a]} \rightarrow \nu [a]h$ holds (by GRPF, theorem \ref{GRPFtheorem}). Let $\phi^*:= \phi + \log h - \log h \circ T +\log \lambda$. First, wlog we assume $\lambda = \exp(\pg) = 1$ (equivalently, $\pg= 0$). We show $\phi^*$ is Walters. Then the corollary follows verbatim as in \cite{sar2} and we omit the proof.
We need to show $\sup_{n\geq 1}\vari_{n+m}\phi^*_n \rightarrow 0$ and $\sup_{n\geq 1}\vari_{n+m}\phi^*_n <\infty$ (recall $\phi^*_n:= \phi^*+...+\phi^*\circ T^{n-1}$). We begin with the first of these conditions.
\begin{claim}
$\vari_{n+m} \phi^*_n \rightarrow 0$ as $m\rightarrow \infty$.
\end{claim}
\begin{proof}
Since $\phi$ is Walters, we need to show $\vari_{n+m} [\log h - \log h \circ T]_n \rightarrow 0$. We do that. $\vari_{n+m} [\log h - \log h \circ T]_n = \vari_{n+m} [\log h - \log h \circ T^n] \leq 2 \vari_m \log h.$
We show $\vari_{m}\log h \rightarrow 0$. Instead of analyzing the difference $\log h(x) - \log h(y)$, we analyze $\log\frac{h(x)}{h(y)}$. Recall that $h(x) =\nu[a] \lim_{n\rightarrow \infty} \Lp^n 1_{[a]}(x) =\lim_{n\rightarrow \infty}  \sum_{\underline p} \exp \phi_n (a,\underline p,x_0^{\infty})$, where $| \underline p | = n-1$, $(a,\underline p,x_0^{\infty})$ is admissible.
\begin{eqnarray*}
\frac{h(x)}{h(y)} &=& \lim_{n\rightarrow \infty}  \sum_{\underline p} \exp \phi_n (a,\underline p,x_0^{\infty}) \bigl / \lim_{n\rightarrow \infty}  \sum_{\underline p} \exp \phi_n (a,\underline p,y_0^{\infty})\\
&=& \lim_{n\rightarrow \infty}  \sum_{\underline p} \exp \phi_n (a,\underline p,x_0^{\infty}) / \sum_{\underline p} \exp \phi_n (a,\underline p,y_0^{\infty})\\
&\leq & \lim_{n\rightarrow \infty}  \sum_{\underline p} \exp [\phi_n (a,\underline p,y_0^{\infty}) + \vari_{n+m}\phi_n] / \sum_{\underline p} \exp \phi_n (a,\underline p,y_0^{\infty})\text{ ($\because x_0^m = y_0^m$)}\\
&\leq & \exp\sup_{n\geq 1} \vari_{n+m} \phi_n\\
\end{eqnarray*}
Hence $\log \frac{h(x)}{h(y)} \leq \sup_{n\geq 1} \vari_{n+m} \phi_n \rightarrow 0$ by Walters property.
\end{proof}
\begin{claim}
$M:= \sup_{n\geq 1} \vari_{n+1} \phi^*_n < \infty$
\end{claim}
\begin{proof}
We now show that $\sup_{n \geq 1}\vari_{n+1} [\log h - \log h \circ T]_n = \sup_{n \geq 1} \vari_{n+1}[\log h - \log h \circ T^n]\leq 2\vari_{1} [\log h] < \infty$. Recall that (up to a multiplicative constant), $h(x) = \lim_{m\rightarrow \infty} \Lp^m 1_{[a]}(x)$. Assume now that $x_0 = y_0=b$ and denote $K:=\max\{1, \sup_{n\geq 1}\vari_{n+1} \phi_n\}$ (finite, by Walters property):
\begin{eqnarray*}
|\log h(x) - \log h(y)| &=& |\log \lim_{n\rightarrow \infty} \Lp^n 1_{[a]}(x) -\log \lim_{n\rightarrow \infty}\Lp^n 1_{[a]}(y)|\\
&=&|\lim_{n\rightarrow \infty}\log \Lp^n 1_{[a]}(x) - \lim_{n\rightarrow \infty}\log \Lp^n 1_{[a]}(y)| \text{ (log is continuous on its domain)}\\
&=& \left\vert \lim_{n\rightarrow \infty} \log \frac{ \sum_{(a,\underline p,b,x_{1}^{\infty}),|\underline p|= n-1  } \exp\phi_n(a,\underline p,b,x_{1}^{\infty})} {\sum_{(a,\underline p,b,y_{1}^{\infty}),  |\underline p|=n-1} \exp\phi_n(a,\underline p,b,y_1^{\infty})} \right \vert\ \\
&\leq & \left\vert \lim_{n\rightarrow \infty} \log \frac{ \sum_{(a,\underline p,b,x_{1}^{\infty}), |\underline p| =n-1 } \exp\phi_n(a,\underline p,b,x_{1}^{\infty})} {\sum_{(a,\underline p,b,x_{1}^{\infty}), |\underline p| =n-1} \exp\phi_n(a,\underline p,b,x_1^{\infty}) \cdot \exp(-K)} \right \vert\ \leq \exp (K).\\
\end{eqnarray*}
We need the assumption $x_0 = y_0$ since this allows us to sum over the same $\underline p$'s. Had it not been the case, we'd have no control over the number of summands and the above argument would not have held.\\
Denote $M':= \sup_{n\geq 1} \vari_{n+1} \phi_n$ and recall $[\log h - \log h \circ T]_n$ is telescopic. We get:
\begin{eqnarray*}
\vari_{n+1}\phi^*_n &\leq & \vari_{n+1} \phi_n + \vari_{n+1} [\log h - \log h \circ T^n]\\
&\leq & M' +2\vari_{1}\log h <M' + 2\exp (K) < \infty\\
\end{eqnarray*}
\end{proof}
These two claims show that $\phi^*$ is Walters - the relevant variations are all bounded and approach zero (using the first claim and the Walters property of $\phi$).
\end{proof}
%%%%%%%%%%%%%%ORNSTEIN THEORY!!!!!%%%%%%%%%%%%%%
\section{Ornstein theory} \label{Ornstein}
In this section we finish the proof of theorem \ref{reduced theorem}, which implies theorem \ref{main theorem} by the discussion in section \ref{transection}. On two sided shift spaces one needs to specify where a cylinder begins (e.g. for $\ua = (a_0,...,a_{n-1})$ we write $\ _m[\ua] =\{ \hat x\in \hat X : x_m^{m+n} = \ua\}$, the cylinder of $\ua$ starting at the $m$th coordinate). In order to make notation easier, we assume that if a starting coordinate is not specified, it is zero (i.e. $[\ua] = \{\hat x \in \hat X : x_0^{n} = \ua\}$).\\
%%%
%%%
%%%
Let $m<n$ integers. For a partition $\beta$, denote $\beta_{m}^n : = \bigvee_{i=m}^{n} T^{-i}\beta$. A sequence of partitions $\{\beta_m\}_{m=1}^{\infty}$ is called increasing if $\beta_{m+1} \vee \beta_{m} = \beta_{m+1}$. It is generating if $\sigma\left (\bigvee_{i=0}^{n}\beta_i \right ) \uparrow \mathcal{B}$ (the Borel $\sigma$-algebra).  
\begin{definition}
A finite measurable partition $\beta$ is called weakly Bernoulli if $\forall  \epsilon >0\ \exists k > 1$ s.t. $\sum_{A\in \beta_{-n}^0} \sum_{B\in \beta_{k}^{n+k}} |\hat\mu(A\cap B) - \hat \mu(A)\hat \mu(B)| < \epsilon$ for all $n>0$.
\end{definition}
By the work of Ornstein \& Friedman, \cite{ornstein} , we know that if an invertible probability preserving transformation has a generating sequence of weak Bernoulli partitions, then it is measure theoretically isomorphic to a Bernoulli scheme. This is the heart of the proof of theorem \ref{reduced theorem} (consequently, theorem \ref{main theorem}) and the focus of this section.\\

In order to use the above notions, we would like to use the information we gathered on one-sided shifts and apply it to two-sided shifts. We now explain how this is done. Assume $\hat \phi \in C(\hat X), \phi \in C(X)$ satisfy $\hat \phi= \phi\circ \pi +h - h\circ T$, where $h\in C(\hat X)$ is bounded and $\pi$ is the natural projection (see section \ref{waltersection}).\\
It is known from \cite{roh} that if $\hat \mu$ is a $\hat T$-invariant measure, it induces a $T$-invariant measure $\mu$ on the natural extension with the same entropy ($h_{\hat \mu}(T) = h_{\hat \mu}(\hat T)$). Consequently, a $\hat T$-invariant measure on the two-sided shift induces a $T$-invariant measure on the one-sided shift with the same entropy. Let $\hat \mu$ be $\hat T$-invariant and let $\mu$ be the $T$-invariant measure it induces. $\int \hat \phi d\hat \mu = \int \phi \circ \pi +h-h\circ T d\hat \mu = \int \phi \circ \pi d\hat \mu = \int \phi d\mu$. Consequently, $h_{\hat \mu}(\hat T) + \int \hat \phi d\hat \mu = h_{\mu}(T) + \int \phi d \mu$ (we rely on the fact that the transfer function we constructed in the proof of theorem \ref{waltercohomology}, here denoted $h$, is bounded).\\
The above implies $P_G(\hat \phi) = P_G(\phi)$ and so the equilibrium measure for the one-sided shift with the potential $\phi$ is the projection of the two-sided equilibrium measure of the potential $\hat \phi$. This means that any bound we find for equilibrium measures of cylinders in the one-sided shift space automatically applies to the equilibrium measure of $\hat \phi$. Particularly, we may use corollary \ref{corollary2uniqueness}. Using the assumptions and definitions there, assuming wlog that $\pg = 0$ we have, $\forall F\in L^1(\mu)$:
\begin{eqnarray*}
\int \Lps F d\mu &=& \int \sum_{Ty=x} \exp (\phi(y)) \frac{h(y)}{h\circ T (y)}F(y) d\mu = \int \frac{1}{h} \Lp (hf)hd\nu\\
&=& \int \Lp (hF) d\nu = \int F hd\nu  = \int F d\mu ,\\
\end{eqnarray*}
so $\Lps ^*\mu = \mu$. Now it is easy to derive the following:
$$
f,g \in C(X), f,g >0 \Longrightarrow  \int f(g\circ T^n)d\mu = \int (\Lps^n f)g d\mu .
$$
Before we prove theorem \ref{reduced theorem}, let us first introduce a useful notation: $a = M^{\pm 1}b$ iff $M^{-1}b \leq a \leq Mb$.
\begin{proof}[Proof of theorem \ref{reduced theorem}.]
Suppose $\hat \mu$ is an \eqm of $\hat \phi\in C(\hat X)$ as in the statement of the theorem. For every finite set  of letters (states) $\cal{V}' \subseteq \cal{V}$, we let $\alpha(\cal{V}'):= \{\ _0[v]: v\in \cal{V}'\} \cup \{ \bigcup_{v\not \in \cal{V}'}\ _0[v]\}$. We show that $\alpha(\cal{V}')$ is weak Bernoulli. This implies the theorem, by the discussion above.\\
As in \cite{sar2}, we make some choice of parameters. The theorem holds, given that we are able to choose these parameters. We first fix some small $\delta_0 >0$ s.t. every $0<t<\delta_0$ satisfies $1- e^{-t} \in (\frac{1}{2}t,t)$. We fix some smaller $\delta < \delta_0$ to be determined later. Then we choose:
\begin{itemize}
\item Some finite collection $S^* \subseteq S$ of states s.t. $\hat \mu (\cup_{a\in S^*} [a])>1-\delta$,
\item A constant $C^*= C^*(S^*) > 1$ as in corollary \ref{corollary2uniqueness},
\item $m\in \bb{N}, m = m(\delta)$ s.t. $\sup_{n\geq 1} \vari_{n+m}\phi^{*}_n < \delta$. This can be done since $\phi^*$ is Walters,
\item A finite collection $\gamma$ of $m$-cylinders $[\underline c]$ s.t. $\hat \mu (\cup \gamma)> e^{-\delta / 2(C^*)^2}$,
\item Points $x(\underline c) \in [\underline c] \in \gamma$,
\item Natural numbers $K(\underline c,\underline c')$ where $[\underline c], [\underline c'] \in \gamma$ s.t. for every $k\geq K(\underline c,\underline c')$,
$$
\Lps^k 1_{[\underline c]}(x(\underline c')) =e^{\pm \delta} \mu[\underline c].
$$
\item $K(\delta) := \max \{K(\underline c,\underline c'): \underline c', [\underline c]\in \gamma\} +m$.
\end{itemize}
Given these choices of parameters, we can complete the proof. This proceeds very similarly to  \cite{sar2}. We start with a claim.
\begin{claim}
Let $A:=\ _{-n}[a_0,...,a_n], B:=\ _k[b_0,...,b_n]$ be two non-empty cylinders of length $n+1$. Let $b_0,a_n \in S^*$. Then for every $k >K(\delta)$ and every $n\geq 0$,
$$
|\hat \mu (A\cap B) -\hat \mu(A)\hat \mu(B)| <15\delta\hat \mu(A)\hat \mu(B).
$$
\end{claim}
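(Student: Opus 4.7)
By $\hat T$-invariance of $\hat\mu$ (applied via $\hat T^n$) and the fact that $\pi:\hat X\to X$ pushes $\hat\mu$ forward to $\mu$, one has $\hat\mu(A) = \mu[\alpha]$, $\hat\mu(B) = \mu[\beta]$, and $\hat\mu(A\cap B) = \mu([\alpha]\cap T^{-K}[\beta])$, where $\alpha := (a_0,\ldots,a_n)$, $\beta := (b_0,\ldots,b_n)$, $K := n+k$. Thus the claim reduces to showing $\mu([\alpha]\cap T^{-K}[\beta]) = (1\pm 15\delta)\mu[\alpha]\mu[\beta]$.

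The plan is to insert good intermediate cylinders from $\gamma$ at positions $n+1,\ldots,n+m$, writing
$\mu([\alpha]\cap T^{-K}[\beta]) = \sum_{\underline c\in\gamma} \mu([\alpha,\underline c]\cap T^{-K}[\beta]) + E_1$,
with $E_1$ the complement contribution. For each main summand, apply the duality $\int f\cdot (g\circ T^K)\, d\mu = \int \Lps^K(f)\cdot g\, d\mu$ (from section \ref{Ornstein}) and peel off $\Lps^{n+m+1}$ to obtain $F := \Lps^{n+m+1}(1_{[\alpha,\underline c]}) = \exp(\phi^*_{n+m+1}(\alpha,\underline c,\cdot))\mathbf{1}_{\text{adm}}$. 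By the Walters property of $\phi^*$ and the choice $\sup_{n\geq 1}\vari_{n+m}\phi^*_n<\delta$, $F$ is $e^{\pm\delta}$-constant on each $m$-cylinder. A second insertion of $\gamma$-cylinders at positions $K-m,\ldots,K-1$ just before $[\beta]$, combined with the convergence $\Lps^{k-m-1}(1_{[\underline c']})(x(\underline c)) = e^{\pm\delta}\mu[\underline c']$ at the designated test points (valid since $k-m-1 \geq \max K(\cdot,\cdot)$ by definition of $K(\delta)$), the Walters regularity of $\Lps^{k-m-1}(1_{[\underline c']})$ allowing the passage from test points to general $x\in [\beta]$ (with $b_0\in S^*$ used to locate the relevant $m$-cylinder inside $\gamma$), and the identity $\int F\,d\mu = \mu[\alpha,\underline c]$, yields
$\mu([\alpha,\underline c]\cap T^{-K}[\beta]) = (1\pm O(\delta))\mu[\alpha,\underline c]\mu[\beta]$.
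Summing over $\underline c\in\gamma$ and using corollary \ref{corollary2uniqueness} (with $a_n\in S^*$) to obtain $\sum_{\underline c\in\gamma}\mu[\alpha,\underline c] = (1 - O(\delta))\mu[\alpha]$, together with a parallel bound on $E_1$ (using $a_n,b_0\in S^*$ and $\hat\mu(\cup\gamma)>e^{-\delta/(2(C^*)^2)}$), gives the claimed inequality.

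The main obstacle will be the bookkeeping of multiplicative errors: each Walters approximation contributes $e^{\pm\delta}$, each use of the convergence contributes $e^{\pm\delta}$, and each application of corollary \ref{corollary2uniqueness} contributes $(C^*)^{\pm 1}$. The quantitative choice $\hat\mu(\cup\gamma) > e^{-\delta/(2(C^*)^2)}$ is engineered precisely so that, when multiplied by a $(C^*)$ factor from the corollary applied to bound complement contributions, the error stays of order $\delta$. Ensuring that the $(C^*)^{\pm 1}$ factors cancel via paired applications of the corollary at the two ends of the relevant cylinders (using both $a_n\in S^*$ and $b_0\in S^*$), together with the inequality $|e^{c\delta}-1|\leq 2c\delta$ for small $c\delta$, keeps the total multiplicative error under $15\delta$ provided $\delta<\delta_0$ is chosen small enough.
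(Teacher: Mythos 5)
Your overall strategy matches the paper's: reduce the two-sided claim to a one-sided statement via the natural projection and shift-invariance, insert $m$-cylinders from $\gamma$ near the boundary of the gap, use Walters regularity plus the test-point convergence $\Lps^{k}1_{[\underline c]}(x(\underline c'))\to\mu[\underline c]$ for the main term, and combine corollary \ref{corollary2uniqueness} with the quantitative choice of $\gamma$ to bound the error. That is the right skeleton.

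However, there is a technical misstep that would derail the computation. You propose to apply the full $\Lps^K$ to $1_{[\alpha,\underline c]}$ and then peel off $\Lps^{n+m+1}$, giving $F=\Lps^{n+m+1}(1_{[\alpha,\underline c]})=\exp(\phi^*_{n+m+1}(\alpha,\underline c,\cdot))$. This $F$ is \emph{not} supported on a single $m$-cylinder; it is spread over every $m$-cylinder $[\underline d]$ with $(\underline c,\underline d)$ admissible. To feed $F$ into the test-point convergence you would have to expand $F\approx\sum_{\underline d}c_{\underline d}\,1_{[\underline d]}$, and the contributions from $\underline d\notin\gamma$ (and, for $y\in[\underline c']$, from $\Lps^{k-m-1}1_{[\underline d]}(y)$ with $\underline d\notin\gamma$, which the choice of $K(\delta)$ does not control) are not bounded by the parameters you chose. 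Moreover, your second insertion of $\underline c'$ at positions $K-m,\ldots,K-1$ must happen \emph{before} the duality is applied, i.e.\ you should work with $\int\Lps^{K-m}(1_{[\alpha,\underline c]})\cdot 1_{[\underline c',\beta]}\,d\mu$; after applying the full $\Lps^K$ those coordinates are already consumed. The paper avoids both problems by peeling off only $\Lps^{n+1}$: then the intermediate object is $w\mapsto\exp(\phi^*_{n+1}(\underline a,w))\,1_{[\underline c]}(w)$, which \emph{is} supported on the single $m$-cylinder $[\underline c]$ and is $e^{\pm\delta}$-constant there with average $\mu[\underline a,\underline c]/\mu[\underline c]$; this gives the master estimate $\Lps^{n+k-m}1_{[\underline a,\underline c]}(y)=e^{\pm\delta}\tfrac{\mu[\underline a,\underline c]}{\mu[\underline c]}\Lps^{k-m-1}1_{[\underline c]}(y)$, and the remaining power $k-m-1\geq\max K(\cdot,\cdot)$ precisely because $K(\delta)=\max K(\cdot,\cdot)+m$. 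You should replace your peel of $\Lps^{n+m+1}$ by a peel of $\Lps^{n+1}$ and insert both $\underline c$ and $\underline c'$ before invoking the duality. Finally, a small point: the hypothesis $b_0\in S^*$ is not used ``to locate the relevant $m$-cylinder inside $\gamma$'' (membership of $[b_0,\ldots,b_{m-1}]$ in $\gamma$ is not guaranteed by $b_0\in S^*$); it is used, together with $a_n\in S^*$, to invoke the quasi-multiplicativity of corollary \ref{corollary2uniqueness} when bounding the error term.
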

\begin{proof}
We denote $\alpha_m$ the collection of $m$-cylinders $[\underline c]$. For every $k > 2m$,
\begin{eqnarray*}
\hat \mu(A\cap B) &=& \sum_{[\underline c], [\underline c'] \in \alpha_m} \hat \mu(_{-n}[\underline a, \underline c] \cap\ _{k-m}[\underline c', \underline b])\\
&=& \sum_{[\underline c], [\underline c'] \in \alpha_m} \hat \mu ([\underline a,\underline c] \cap T^{-n}\ _{k-m}[\underline c', \underline b])\ \text{(shift invariance)}\\
&=& \sum_{[\underline c], [\underline c'] \in \alpha_m} \mu([\underline a,\underline c] \cap T^{-n-(k-m)} [\underline c', \underline b])\\
&=& \sum_{[\underline c], [\underline c'] \in \alpha_m} \int 1_{[\underline a,\underline c]} \cdot 1_{[\underline c', \underline b]}\circ T^{(n+k-m)}d\mu\\
&=& \sum_{[\underline c], [\underline c'] \in \alpha_m} \int \Lps^{n+k-m}1_{[\underline a,\underline c]} \cdot 1_{[\underline c', \underline b]}d\mu\\
&=& \sum_{[\underline c],[\underline c']\in \gamma}\int_{[\underline c', \underline b]} \Lps^{n+k-m}1_{[\underline a,\underline c]}d\mu   + \sum_{[\underline c] \in\alpha_m \backslash \gamma \text{ or }[\underline c']\in\alpha_m \backslash \gamma} \int_{[\underline c', \underline b} \Lps^{n+k-m}1_{[\underline a,\underline c]}d\mu.\\
\end{eqnarray*}
We proceed and estimate the terms. The left is called the main term, the second will be called the error term. We start with a preliminary estimate we will use throughout.
\subsection*{Preliminary estimate}
We use the following fact: $\forall y \in [\underline c' , \underline b]$,
\begin{eqnarray*}
\Lps^{k+n-m}1_{[\underline a,\underline c]}(y) &=& \sum_{T^{n+k-m}z=y}\exp (\phi^*_{k+n-m}(z)) 1_{[\underline a,\underline c]}(z)\\
&=& \sum_{T^{n+k-m}z=y} \exp\left( \sum_{i=0}^{k+n-m-1} \phi^* ( T^iz)\right ) 1_{[\underline a,\underline c]}(z)\\
&=& \sum_{T^{n+k-m}z=y} \exp\left( \sum_{i=0}^{n} \phi^* (T^iz)\right)\exp\left(\sum_{i=n+1}^{k+n-m-1} \phi^* (T^iz) \right )1_{[\underline a,\underline c]}(z)\\
&=& \sum_{T^{n+k-m}z=y} \exp( \phi^*_{n+1}(z))\exp( \phi^*_{k-m-1} (T^{n+1}z))1_{[\underline a,\underline c]}(z)\\
&=& \sum_{T^{k-m-1}z=y} \exp( \phi^*_{n+1}(\underline a,z))\exp( \phi^*_{k-m-1}(z))1_{[\underline c]}(z).\\
\end{eqnarray*}
By our choice of $m(\delta)$ before, we can estimate $\exp(\phi^*_{n+1}(\underline a,z)) =e^{\pm \delta}\exp(\phi^*_{n+1}(\underline a,w))$ for every $w,z \in [\underline c]$. Fix $z$ and average over all $w$'s:
\begin{eqnarray*}
\exp(\phi^*_{n+1}(\underline a,z)) &=& \frac{e^{\pm \delta}}{\mu[\underline c]}\int_{[\underline c]} \exp(\phi^*_{n+1}(\underline a,w))d\hat \mu(w)\\
&=& \frac{e^{\pm \delta}}{\mu[\underline c]}\int \Lps^{n+1}1_{[\underline a,\underline c]}d\mu = e^{\pm \delta}\frac{\mu[\underline a,\underline c]}{\mu[\underline c]}\\
\end{eqnarray*}
The reason we did the substitution with $\Lps$ is that there's actually just one summand in the expression for $\Lps^{n+1}1_{[\underline a,\underline c]}$.
Using the previous two estimates, we can get:
\begin{equation}\label{master}
\Lps^{k+n-m}1_{[\underline a,\underline c]}(y)= e^{\pm \delta}\frac{\mu[\underline a,\underline c]}{\mu[\underline c]} \Lps^{k-m-1}1_{[\underline c]}(y) \text{ for every $y \in [\underline c', \underline b]$}.\\
\end{equation}
We refer to this as the master estimate.

\subsection*{Main term}
First, we assume $k > K(\delta)$ and $k>2m(\delta)$. Since we estimate the main term, we know $[\underline c],[\underline c'] \in \gamma$. Note also $y \in [\underline c', \underline b]$.
\begin{equation*}
\Lps^{k-m-1}1_{[\underline c]}(y) = e^{\pm \delta}\Lps^{k-m-1}1_{[\underline c]}(x(\underline c'))= e^{\pm 2\delta}\mu[\underline c'].\\
\end{equation*}
Plugging into the master estimate (above, equation \ref{master}) we get that $k > K(\delta), k>2m(\delta)$ implies $\Lps^{n+k-m}1_{[\underline a,\underline c]}(y) = e^{(\pm 3\delta)} \mu[\underline a,\underline c]$ on $[\underline c', \underline b]$.
We can integrate and see the main term equals:
\begin{equation*}
\sum_{[\underline c]\in \gamma ,[\underline c']\in \gamma}\int_{[\underline c', \underline b]} \Lps^{n+k-m}1_{[\underline a,\underline c]}d\mu =
e^{\pm 3\delta} \sum_{[\underline c]\in \gamma ,[\underline c']\in \gamma} \mu [\underline a,\underline c] \mu[\underline c',\underline b] =
e^{\pm 3\delta}  (\sum_{[\underline c]\in \gamma} \mu [\underline a,\underline c]) (\sum_{[\underline c']\in \gamma} \mu [\underline c',\underline b])\\
\end{equation*}
The first bracketed term is bounded from above by $\mu[\underline a]$. To bound below we use corollary \ref{corollary2uniqueness}:
\begin{eqnarray*}
\sum_{[\underline c] \in \gamma, [\underline a,\underline c] \not = \emptyset}\mu[\underline a,\underline c] &=& \mu[\underline a] - \sum_{[\underline c] \in\alpha_m \backslash \gamma, [\underline a,\underline c] \not = \emptyset} \mu[\underline a,\underline c] \geq \mu[\underline a] (1 - \sum_{[\underline c] \in\alpha_m \backslash \gamma, [\underline a,\underline c] \not = \emptyset} C^* \mu[\underline c])\\
&\geq & \mu[\underline a] (1 - \sum_{[\underline c] \in\alpha_m \backslash \gamma} C^* \mu[\underline c])\\
&\geq & \mu[\underline a](1 - C^*(1- e^{-\delta /2(C^*)^2})) \text{ by choice of $\gamma$}\\
&\geq & \mu[\underline a] e^{-\delta}, \text{ by choice of $\delta_0$ }
\end{eqnarray*}
So the first sum equals $e^{\pm \delta} \mu[\underline a]$. Similarly, the second is $e^{\pm \delta} \mu[\underline b]$ and we get that the main term equals $e^{\pm 5\delta}\mu[\underline a]\mu[\underline b] = e^{\pm 5\delta}\hat \mu(A)\hat \mu(B)$.
\subsection*{Error term}
Since $a_n \in S^*$ we may use the master estimate (equation \ref{master}) in conjunction with corollary \ref{corollary2uniqueness}:
$$
\Lps^{n+k-m}1_{[\underline a,\underline c]}(y) \leq C^*e^{\delta} \mu[\underline a]\Lps^{k-m-1}1_{[\underline c]}(y) \text{ on $[\underline c',\underline b]$}
$$
therefore
\begin{eqnarray*}
\text{Error term } &\leq & C^*e^{\delta} \mu[\underline a] \sum_{[\underline c]\in \alpha_m \backslash\gamma \text{ or } [\underline c']\in\alpha_m \backslash \gamma} \int_{[\underline c', \underline b]} \Lps^{k-m-1}1_{[\underline c]}d\mu\\
&=& C^*e^{\delta} \mu[\underline a] \sum_{[\underline c]\in \alpha_m \backslash\gamma \text{ or } [\underline c']\in\alpha_m \backslash \gamma}
\mu([\underline c] \cap T^{-(k-m-1)}[\underline c', \underline b])\\
&\leq & C^*e^{\delta} \mu[\underline a] \left (\sum_{[\underline c] \in \alpha_m \backslash \gamma} \mu([\underline c] \cap T^{-(k-1)}[\underline b]) + \sum_{[\underline c'] \in \alpha_m \backslash \gamma} \mu(T^{-(k-m-1)}[\underline c',\underline b]) \right )\\
&=& C^*e^{\delta} \mu[\underline a] \left (\sum_{[\underline d] \in \alpha_{k-1},[d_0,...,d_{m-1}]\not \in \gamma} \mu [\underline d,\underline b] + \sum_{[\underline c'] \in \alpha_m \backslash \gamma} \mu[\underline c',\underline b] \right )\\
&\leq & (C^*)^2e^{\delta} \mu[\underline a] \left ( \sum_{[\underline d] \in \alpha_{k-1},[d_0,...,d_{m-1}]\not \in \gamma} 
\mu[\underline d]\mu[\underline b] + \sum_{[\underline c'] \in \alpha_m \backslash \gamma} \mu[\underline c']\mu[\underline b] \right ) \text{ ($\because b_0 \in S^*$)}\\
&\leq & (C^*)^2e^{\delta} \mu[\underline a]\mu[\underline b]\cdot 2\hat \mu (\cup \gamma)^c\\
&\leq & e^{\delta} \delta \mu[\underline a]\mu[\underline b] \text{ by choice of $\gamma$}\\
&<& 5\delta \mu[\underline a]\mu[\underline b].\\
\end{eqnarray*}
So the error term is less than $5\delta \hat \mu(A)\hat \mu(B)$.\\
Adding these two estimates together we see that 
$$
(-5\delta + e^{-5\delta}-1)\hat \mu(A)\hat \mu(B) \leq \hat \mu(A\cap B) - \hat \mu(A)\hat \mu(B) \leq (e^{5\delta}+ 5\delta -1) \hat \mu(A)\hat \mu(B).
$$
This translates to:
\begin{equation*}\tag{*}
|\hat \mu(A\cap B) - \hat \mu(A)\hat \mu(B)| \leq \hat \mu(A)\hat \mu(B) \max\{e^{5\delta} + 5\delta -1, 1- e^{-5\delta} + 5\delta\} < 15\delta \hat \mu(A)\hat \mu(B),
\end{equation*}
for $\delta$ sufficiently small.\\
\end{proof}
\begin{claim}
Let $k = K(\delta)$, where $\delta$ was chosen as before. Let $n \geq 0$. Then:
$$
\sum_{A\in \alpha_{-n}^0, B \in \alpha_k^{k+n}} |\hat \mu (A\cap B) - \hat \mu(A)\hat \mu(B)| <19\delta.
$$
\end{claim}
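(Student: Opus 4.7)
The plan is to pass from atoms of $\alpha_{-n}^0$ and $\alpha_k^{k+n}$ to individual single-state cylinders and then invoke the preceding claim. Each atom of $\alpha$ is either a single-state cylinder ${}_0[v]$ with $v \in \mathcal{V}'$ or the union $R := \bigcup_{v \notin \mathcal{V}'} {}_0[v]$, so any atom $A \in \alpha_{-n}^0$ is a disjoint union of all non-empty single-state cylinders $A' := {}_{-n}[a_0, \dots, a_n]$ (writing $a_n$ for the letter at position $0$) whose successive letters lie in the $\alpha$-atoms prescribed by $A$; analogously for $B \in \alpha_k^{k+n}$. Expanding $A \cap B$, $A$, $B$ into these disjoint unions and applying the triangle inequality to each pair $(A,B)$ gives
\begin{equation*}
\sum_{A,B} |\hat\mu(A \cap B) - \hat\mu(A)\hat\mu(B)| \leq \sum_{A', B'} |\hat\mu(A' \cap B') - \hat\mu(A')\hat\mu(B')|,
\end{equation*}
where the right-hand sum now ranges over all non-empty single-state cylinders $A' = {}_{-n}[a_0,\dots,a_n]$ and $B' = {}_k[b_0,\dots,b_n]$ with no restriction on the letters.

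I would then split the right-hand side into the \emph{good} part, where both $a_n \in S^*$ and $b_0 \in S^*$, and the \emph{bad} part (one of these conditions fails). On the good part the previous claim gives $|\hat\mu(A' \cap B') - \hat\mu(A')\hat\mu(B')| < 15\delta\,\hat\mu(A')\hat\mu(B')$ for each summand, and summing freely over $A'$ and $B'$ yields a total of at most $15\delta$, because $\sum_{A'} \hat\mu(A') = \sum_{B'} \hat\mu(B') = 1$. On the bad part I bound each summand trivially by $\hat\mu(A' \cap B') + \hat\mu(A')\hat\mu(B')$. The contribution from $a_n \notin S^*$ to $\sum \hat\mu(A' \cap B')$ collapses, because $\{B'\}$ partitions $\hat X$, to $\sum_{A': a_n \notin S^*} \hat\mu(A') = \hat\mu\bigl(\bigcup_{a \notin S^*}[a]\bigr) < \delta$; the $b_0 \notin S^*$ contribution is bounded by $\delta$ in the same way, using the $\hat T$-invariance of $\hat\mu$ to transport the position $k$ back to position $0$. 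The identical bookkeeping applied to $\sum \hat\mu(A')\hat\mu(B')$ gives another $2\delta$, so altogether the bad part contributes at most $4\delta$.

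Adding the two estimates yields the advertised bound $15\delta + 4\delta = 19\delta$. There is no substantive obstacle: the genuine content sits in the preceding claim, and everything else is bookkeeping — decomposing partition atoms into single-state cylinders, telescoping the inner sums via the partition property of $\{A'\}$ and $\{B'\}$, and absorbing the residual contribution of letters outside $S^*$ through the defining inequality $\hat\mu\bigl(\bigcup_{a \in S^*}[a]\bigr) > 1 - \delta$.
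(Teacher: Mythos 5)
Your argument is correct and follows the same route as the paper: split the double sum according to whether the letter $a_n$ at coordinate $0$ and the letter $b_0$ at coordinate $k$ both lie in $S^*$, apply the previous claim together with $\sum_{A'}\hat\mu(A')=\sum_{B'}\hat\mu(B')=1$ to bound the good part by $15\delta$, and absorb the remaining contributions via the triangle inequality $|\hat\mu(A'\cap B')-\hat\mu(A')\hat\mu(B')|\le\hat\mu(A'\cap B')+\hat\mu(A')\hat\mu(B')$ and the choice $\hat\mu\bigl(\bigcup_{a\in S^*}[a]\bigr)>1-\delta$, giving $4\delta$. Your opening reduction from atoms of $\alpha(\mathcal{V}')$ to single-state cylinders is an extra (harmless) precaution; the paper's proof takes $\alpha$ in the claim to be the full generating partition and defers the passage to coarser $\alpha(\mathcal{V}')$ to the final sentence of the theorem's proof, but the computation is otherwise identical.
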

\begin{proof}
Write $A:= _{-n}[a_0,...,a_n], B:= _k[b_0,...,b_n]$. We break the sum into
\begin{itemize}
\item The sum over $A,B$ for which $a_n,b_0 \in S^*$.
\item The sum over $a_n \not \in S^*$.
\item The sum over $a_n \in S^*, b_0 \not \in S^*$.
\end{itemize}
The first sum is bounded (using previous claim) by $15\delta$. The second and third are bounded (each) by $2\hat \mu (\cup_{a \not \in S^*} [a]) < 2\delta$. The claim follows.
\end{proof}
To conclude, we show that $\alpha(\cal{V}')$ is weakly Bernoulli. To do that, let $\epsilon > 0$. Choose $\delta>0$ so small that both $19\delta <\epsilon$ and (*) holds. Noting that the partitions $\alpha(\mathcal{V}')$ are coarser than $\alpha(\mathcal{V})$, the weak Bernoullicity for $\alpha(\mathcal{V}')$ follows, using the triangle inequality.
\end{proof}
%%%
%%%
%%%
%%%
\section{Cohomology- summable variations} \label{coelhosection}
\begin{theorem}\label{coelhotheorem}
Suppose $f: \hat X \to \bb{R}$ has summable variations with finite first variation (i.e. $\sum_{n=1}^{\infty} \vari_n f < \infty$). Then there exists a $g$ with summable variations that depends only on its non-negative coordinates which is cohomologous to $f$ via a bounded continuous transfer function.
\end{theorem}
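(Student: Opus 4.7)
The plan is to follow the Sinai-style telescoping used in Theorem \ref{waltercohomology} and in the compact argument of Coelho-Quas, and then verify that summable variations (not merely the Walters property) is preserved in our non-compact setting. Write $h$ for the transfer function and $g$ for the sought one-sided function.

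For each state $a\in S$ fix an arbitrary admissible left-infinite sequence $z^a$ ending in $a$, and for $\hat x\in\hat X$ let $\bar x$ agree with $\hat x$ on $[0,\infty)$ and with $z^{x_0}$ on $(-\infty,-1]$. Define
$$
h(\hat x):=\sum_{i=0}^{\infty}\bigl[f(\hat T^i\hat x)-f(\hat T^i\bar x)\bigr].
$$
Since $\hat T^i\hat x$ and $\hat T^i\bar x$ agree on $[-i,\infty)$, the $i$th summand is at most $\vari_{i+1}f$, and summability of variations together with $\vari_1 f<\infty$ gives $|h|\le\sum_{k\ge 1}\vari_k f<\infty$ uniformly on $\hat X$. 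Hence $h$ is well-defined, bounded, and (as a uniform limit of continuous partial sums) continuous. The same algebraic manipulation as in Theorem \ref{waltercohomology} shows that $f+h-h\circ\hat T$ depends only on the non-negative coordinates, so it factors as $g\circ\pi$ with
$$
g(x)=f(\bar x)-\sum_{i=0}^{\infty}\bigl[f(\hat T^i\overline{[\hat T\hat x]})-f(\hat T^i[\hat T\bar x])\bigr].
$$

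The crux is showing $\sum_n\vari_n g<\infty$. Given $x,y\in X$ agreeing on $[0,n-1]$, the leading term satisfies $|f(\bar x)-f(\bar y)|\le\vari_n f$, since $\bar x=\bar y$ on $(-\infty,n-1]$ (the anchor $z^{x_0}=z^{y_0}$ is forced by $x_0=y_0$). Each summand in the series admits two competing bounds: a \emph{tail} bound $2\vari_{i+1}f$ from the agreement of $\overline{[\hat T\hat x]}$ and $[\hat T\bar x]$ on $[0,\infty)$, and a \emph{shift} bound of order $\vari_{n-1-i}f$ (for $i\le n-2$) from the agreement of $x,y$ on $[0,n-1]$ and the matching of anchors. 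Following Coelho-Quas, one partitions and regroups the resulting double sum so that each individual $\vari_k f$ is counted only $O(1)$ times, rather than $O(k)$ times, in $\sum_n\vari_n g$.

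The main obstacle is executing this rearrangement in the non-compact setting. The naive split at $i\approx n/2$ only yields $\vari_n g\lesssim\sum_{k\ge n/2}\vari_k f$, whose sum over $n$ is $\sum_k(2k+1)\vari_k f$, requiring the strictly stronger hypothesis $\sum_k k\vari_k f<\infty$. In the compact case Coelho-Quas circumvent this loss by exploiting additional cancellations in the telescoping, making crucial use of uniform bounds on $f$ and of a finite alphabet when choosing the anchors. In our setting I would redo their argument so that every estimate depends only on the individual variations $\vari_k f$ (finite for each $k$ by assumption) and never on $\sup|f|$, and I would verify that the infinitely many choices of anchors $z^a$ can be made consistently using only the topological transitivity of $\hat X$ assumed throughout the paper. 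Once this refined variation bound is in place, the cohomology identity and the boundedness and continuity of $h$ produced above complete the proof.
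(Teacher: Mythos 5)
There is a genuine gap. You correctly identify the obstruction: with the Sinai anchoring construction $h(\hat x)=\sum_{i\ge 0}[f(\hat T^i\hat x)-f(\hat T^i\bar x)]$, the resulting one-sided $g$ satisfies only $\vari_n g\lesssim \sum_{k\gtrsim n/2}\vari_k f$, so $\sum_n\vari_n g\lesssim\sum_k k\vari_k f$, which is strictly stronger than summable variations. But having identified the obstruction, you do not overcome it. Your plan is to ``redo Coelho--Quas's argument so that every estimate depends only on the individual $\vari_k f$,'' yet the Coelho--Quas method is not a refinement of the Sinai telescoping at all --- it is a different construction of $g$ and of the transfer function, and your description of it (``exploiting cancellations in the telescoping,'' ``uniform bounds on $f$,'' ``finite alphabet when choosing the anchors'') does not match what it actually does. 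Since the proposal never produces that alternative construction, it stops short of a proof.

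What is actually needed (and what the paper does, following Coelho--Quas with only minor adaptations to the countable alphabet) is the dyadic coarse-approximation decomposition. One chooses indices $n_i$ with $\vari_{n_i}f\le 2^{-i}$ and $\vari_{n_i-1}f>2^{-i}$, so that summable variations forces $\sum_i n_i 2^{-i}<\infty$; one sets $f_i(x)=\inf_{[x_{-n_i+1}^{\,n_i-1}]}f$, $h_i=f_i-f_{i-1}$ with $\|h_i\|_\infty\le\vari_{n_i-1}f\le 2^{-(i-1)}$, and defines
$$
g(x)=I_{x_0}+\sum_{i\ge i_0}h_i\bigl(T^{n_i-1}x\bigr),\qquad I_a:=\inf_{[a]}f.
$$
Each term $h_i\circ T^{n_i-1}$ depends on coordinates $[0,2n_i-2]$ only, so $\vari_n(h_i\circ T^{n_i-1})=0$ for $n>2n_i$ and is $\le 4\cdot2^{-i}$ always; summing gives $\sum_n\vari_n g\le\sum_i 2n_i\cdot 4\cdot 2^{-i}<\infty$. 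The shift by $n_i-1$ applied to an approximation supported on $2n_i-1$ coordinates is exactly the device that makes each $\vari_k f$ be counted $O(1)$ times, which the Sinai anchoring cannot do. The transfer function is the separate finite-horizon Birkhoff sum $F=\sum_i\sum_{j=0}^{n_i-2}h_i\circ T^j$, bounded and continuous because $\sum_i n_i\|h_i\|_\infty<\infty$. Note also that the only role of $\vari_1 f<\infty$ here is to make the base term $I_{x_0}$ finite, and no compactness of the alphabet enters; this is precisely the modification over the original compact setting, not any tightening of the Sinai bound. Your proposal would need to be replaced by this construction, not merely refined.
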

\begin{proof}
We follow \cite{coel}, who proved this in the case of finite alphabet. Since $\vari_n f \to 0$, there exists, for every $i\geq 0$ large enough, $n_i$ s.t. $\vari_{n_i}f\leq 2^{-i}$ and $\vari_{n_i-1}f > 2^{-i}$. Let $i':= \min\{i\in\mathbb{N}: \exists n_i\ s.t.\ \vari_{n_i} f \leq 2^{-i}\ \wedge \vari_{n_i-1}f >2^{-i}\}$ and let $i_0:= \max \{i',2\}$. Set $n_i=0$  for $i <i_0$ . By summable variations,
\begin{eqnarray*}
\infty &>& \sum_{n=n_{i_0}}^{\infty} \vari_n f  > \sum_{i=i_0}^{\infty}(n_{i+1} - n_{i})2^{-(i+1)} = \lim_{I\to\infty}\sum_{i=i_0}^{I}(n_{i+1} - n_{i})2^{-(i+1)}\\
&=& \lim_{I\to\infty} \sum_{i= i_{0}+1}^{I}n_i 2^{-(i+1)} -n_{i_0}2^{-(i_0+1)} +n_{I+1}2^{-(I+1)}\\
&>& \frac{1}{2}\lim_{I\to \infty} \sum_{i=i_0}^{I} n_i 2^{-i} -n_{i_0}2^{-i_0}\\
\end{eqnarray*}
and we conclude that $\sum_{i= i_0}^{\infty}n_i 2^{-i} <\infty$.

Define for $a\in S: I_{a}:= \inf_{[a]}f$ (using the finiteness of $f$'s first variation). For $i\geq i_0$ define $f_i(x):= \inf_{[x_{-n_i+1}^{n_i-1}]}f(x)$ and $f_{i_0-1}(x)=I_{x_0}$. Note that $f_i \to f$ monotonically (trivial) and uniformly (by uniform choice of $i$ and uniform continuity of $f$). Now define $h_i = f_i - f_{i-1}$ for $i \geq i_0$.
$$
\Vert h_i \Vert_{\infty} = \Vert \inf_{[x_{-n_i+1}^{n_i-1}]}f - \inf_{[x_{-n_i+2}^{n_i-2}]}f \Vert_{\infty} \leq \vari_{n_i-1} f \leq 2^{-(i-1)}\ \forall i> i_0.
$$
Define also $g(x):= I_{x_0} + \sum_{i=i_0}^{\infty} h_{i}(T^{n_i-1}x)$. We use the following observations: 
\begin{itemize}
\item $g$ depends only on $x_{0}^{\infty}$,
\item $g$ is well defined and continuous,
\item $\forall n\ \vari_n(h_i \circ T^{n_i-1}) \leq 4\cdot 2^{-i}$, by the bound on $\Vert h_i\Vert_{\infty}$,
\item $\vari_n(h_i \circ T^{n_i-1})=0$ for $n>2n_i$, since $h_i$ is constant on cylinders from $-n_i+1$ to $n_i-1$.
\end{itemize}
Now we can show $g$ has summable variations:
\begin{eqnarray*}
\sum_{n=2}^{\infty}\vari_n (g)&=& \sum_{n=2}^{\infty}\vari_n \left (\sum_{i=i_0}^{\infty} h_i \circ T^{n_i-1} \right ) \leq  \sum_{i=i_0}^{\infty} \sum_{n=2}^{\infty} \vari_n ( h_i \circ T^{n_i-1})\\
&=& \sum_{i=i_0}^{\infty} \sum_{n=2}^{2n_i} \vari_n ( h_i \circ T^{n_i-1})\leq \sum_{i=i_0}^{\infty} 2n_i\cdot 4 \cdot 2^{-i} <\infty. \\
\end{eqnarray*}
Now find a transfer function. Define $F := \sum_{i=i_0}^{\infty}\sum_{j=0}^{n_i-2}h_i\circ T^j$. We show it is uniformly Cauchy. Let $F_N := \sum_{i=i_0}^{N}\sum_{j=0}^{n_i-1}h_i\circ T^j$. Let $N>M$.
\begin{equation*}
\Vert F_N- F_M \Vert_{\infty} = \Vert \sum_{i=M+1}^{N}\sum_{j=0}^{n_i-1}h_i\circ T^j \Vert_{\infty} \leq
\sum_{i=M+1}^{N}\sum_{j=0}^{n_i-1}\Vert h_i\Vert_{\infty} \leq \sum_{i=M+1}^{N}n_i2^{-(i-1)}
\end{equation*}
and this can be made arbitrarily small. Thus $F_N$ is a uniformly Cauchy series of uniformly continuous functions and $F$ is well defined and continuous.  A calculation shows this is indeed the transfer function. Since $F_N$ is a finite sum of terms that have bounded sup-norm, the limit $F = \lim_{N\to\infty} F_N$ is also bounded (uniform Cauchy). The function $g$ we have constructed is one sided so we are done.
\end{proof}
\section{Acknowledgements}
I would like to express my deepest gratitude to my M.Sc. advisor Omri Sarig for, simply put, being the best.
I would also like to thank the Weizmann Institute of Science and the Department of Mathematics and  Computer Science for giving me the best possible environment for conducting scientific research.
\bibliography{ref}
\bibliographystyle{amsalpha}
\end{document}